\documentclass[12pt]{article}
\usepackage{amsfonts}
\usepackage{mathrsfs,dsfont,bbm}
\usepackage[title]{appendix}
\usepackage{authblk}
\usepackage[latin1]{inputenc}
\usepackage{amsmath,amssymb}
\usepackage{dsfont}
\usepackage{latexsym}
\usepackage{enumitem}
\usepackage[active]{srcltx}
\usepackage[
bookmarks=true,         
bookmarksnumbered=true, 
colorlinks=true, pdfstartview=FitV, linkcolor=blue, citecolor=blue,
urlcolor=blue]{hyperref}

\usepackage{graphicx}
\usepackage{subfig}
\topmargin -2cm
\oddsidemargin -0.06cm
\evensidemargin -0.06cm
\textwidth 16.42cm
\textheight 23.96cm
\parskip 1.2pt

\newtheorem{theorem}{Theorem}[section]
\newtheorem{corollary}[theorem]{Corollary}
\newtheorem{lemma}[theorem]{Lemma}

\newtheorem{remark}[theorem]{Remark}

\numberwithin{equation}{section}
\parindent.5cm
\parskip0.2cm%
\def\E{\mathbb{E}}
\def\d{\mathrm{d}}
\def\e{\varepsilon}
\def\det{\mathrm{det}}
\def\D{\mathcal{D}}

\newenvironment{proof}[1][Proof]{\noindent\textit{#1.} }{\hfill \rule{0.5em}{0.5em}}

\begin{document}

	\title{\large\textbf{Asymptotic Properties of the Derivative of Self-Intersection Local Time of Multidimensional Fractional Brownian Motion}}
	
	\author{
		Jiazhen Gu$^1$, Jinchi Jiang$^1$, Qian Yu$^{1,}$\thanks{Corresponding author, Email: qyumath@163.com}\\
		{\textsuperscript{1}\footnotesize\itshape School of Mathematics, Nanjing University of Aeronautics and Astronautics}\\
		{\footnotesize\itshape Nanjing, Jiangsu 211106, P.R. China}
	}
	\date{\vspace{-40pt}}
	\maketitle
	\begin{abstract}
		Let $\{B_t^H,t\geq0\}$ be a $d$-dimensional fractional Brownian motion. We prove that the approximation of the first-order derivative of self-intersection local time, defined as
		$$\alpha_{\e,t}^{(1)}(0)=-\int_0^t\int_0^sp_\e^{(1)}(B_s^H-B_r^H)\d r\d s,$$ 
		where $p_\e^{(1)}(x_1,\cdots,x_d):=\partial _{x_1}p(x_1,\cdots,x_d)$ and $p_\e(x)=(2\pi\e)^{-d/2}e^{|x|^2/2\e},x\in\mathbb{R}^d, d\geq2$ is the heat kernel, exits in $L^2$ sense if and only if $H<\frac{3}{2(1+d)}$ and satisfies three different central limit theorems when normalized by $\e^{\frac d2+1-\frac1H}$ for $H>\frac12$ and $d\geq2$, normalized by $\e^{\frac d2+\frac12-\frac 3{4H}}$ for $\frac{3}{2(1+d)}<H<\frac12$ and $d\geq3$, and normalized by $\log(1/\e)^{-\frac12}$ for the critical case $H=\frac{3}{2(1+d)}$ and $d\geq3$.
	\end{abstract}
	\section{Introduction}
	The fractional Brownian motion (fBm) on $\mathbb{R}^d$ with Hurst parameter $H\in(0,1)$ is a $d$-dimensional centered Caussian process $B^H=\{B_t^H,t\geq0\}$ with independent one-dimensional component processes $B^{H,i},i=1,\cdots,d$ and the covariance function is given by
	$$\E(B_t^{H,i}B_s^{H,i})=\frac12\left(t^{2H}+s^{2H}+|t-s|^{2H}\right).$$
	The self-intersection local time (SLT) of fBm, formally defined by 
	$$\alpha_t(x)=\int_0^t\int_0^s\delta(B_s^H-B_r^H-x)\d r\d s, $$
	where $\delta$ is the Diarac function, was first studied by Rosen \cite{rosen1987intersection} in the planar case. Intuitively, $\alpha_t(x)$ measures the amount of time that the process $B_t^H$ spends intersecting itself on the interval $[0,t]$. It was further investigated by Hu and Nualart \cite{hu2005renormalized}, who has shown that for a $d$-dimensional ($d\geq2$) fBm, $\alpha_t(0)$ exists in $L^2$ whenever the Hurst parameter $H$ satisfies $Hd<1$.  
	
	Motivated by spatial integrals concerning local time, the derivative of self-intersection local time (DSLT) for one-dimensional fBm was introduced by Rosen \cite{rosen2004derivatives}, denoted by 
	$$\alpha_t'(x):=-\int_0^t\int_0^s\delta'(B_t^H-B_s^H-x)\d r\d s.$$
	This version is based on the occupation-time formula. Since the fBm possesses the Tanaka formula, as we all know, the DSLT for fBm takes another form, which was extended by Jung and Markowsky  \cite{jung2014tanaka}.
	$$\alpha_t'(x):=-H\int_0^t\int_0^s\delta'(B_t^H-B_s^H-x)(s-r)^{2H-1}\d r\d s.$$
	
	  Due to its connections with theoretical physics (see \cite{le1986proprietes,le1988fluctuation} and references therein), the study of DSLT for fBm has become a subject of intensive study. Recently, Yu \cite{yu2021higher} introduced the $k$-th-order DLST for fBm
	\begin{align*}
		\alpha_t^{(k)}(x)&=\frac{\partial_k}{\partial_{x_1}^{k_1}\cdots\partial_{x_d}^{k_d}}\int_0^t\int_0^s\delta(B_t^H-B_s^H-x)\d r\d s\\
		&=(-1)^{|k|}\int_0^t\int_0^s\delta^{(k)}(B_t^H-B_s^H-x)\d r\d s,
	\end{align*} 
	where $k=(k_1,\cdots,k_d)$ is a multi-index with all $k_i$ being non-negative integers, $|k|=\sum_{i=1}^dk_i$ and $\delta^{(k)}(x)=\frac{\partial_k}{\partial_{x_1}^{k_1}\cdots\partial_{x_d}^{k_d}}\delta^{(k)}$ is the $k$-th-order partial derivative of $\delta$. To be precise, we define 
	$$p_\e(x)=\frac{1}{(2\pi\e)^\frac d2}e^{\frac{|x|^2}{2\e}}=\frac{1}{(2\pi)^d}\int_{\mathbb{R}^d}e^{\imath\langle x,y\rangle-\e\frac{|y|^2}2}\d y,$$ 
	where $x,y\in\mathrm{R}^d$, $\langle x,y\rangle=\sum_{i=1}^dx_iy_i$, $|y|^2=\sum_{i=1}^dy_i^2$, $\d y=\d y_1\cdots\d y_d$ and $\imath$ is the imaginary unit. Since $p_\e\rightarrow\delta$ as $\e$ tends to zero, we can regard $\alpha_{t,\e}(x)$ as an approximation of $\alpha_{t}$ for a sufficiently minor $\e$. Consider
	\begin{equation}\label{sec1-eq.1}
		\alpha_{\e,t}^{(k)}(x)=(-1)^{|k|}\int_0^t\int_0^sp_\e^{(k)}(B_s^H-B_r^H-x)\d r\d s,
	\end{equation}
	where
	$$p_\e^{(k)}(x)=\frac{\partial_k}{\partial_{x_1}^{k_1}\cdots\partial_{x_d}^{k_d}}p_\e(x)=\frac{\imath^{|k|}}{(2\pi)^d}\int_{\mathbb{R}^d}y_1^{k_1}\cdots y_d^{k_d}e^{\imath\langle x,y\rangle-\e\frac{|y|^2}2}\d y.$$
	Then, the $k$-th-order derivative of $\alpha_t$ can be naturally defined as the limit of $\alpha_{\e,t}^{(k)}(x)$ as $\e$ tends to zero. For the $k$-th-order DSLT under this definition, Yu \cite{yu2021higher} proved the existence conditions of $\alpha_{t,\e}(0)$ in $L^2$ as follows.
    
\begin{theorem}{\cite{yu2021higher}} For $H\in(0,1)$ and $\alpha_{t,\e}^{(k)}(x)$ defined in \eqref{sec1-eq.1}. Let $\#$ denote the number of odd numbers in the multi-index $k=(k_1,\cdots,k_d)$. Suppose that $H<\min\{\frac{2}{2|k|+d},\frac{1}{|k|+d-\#},\frac1d\}$. Then, $\alpha_{t,\e}^{(k)}(0)$ exists in $L^2$.
\end{theorem}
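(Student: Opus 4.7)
The plan is to establish that $\sup_\e \E[(\alpha_{\e,t}^{(k)}(0))^2]<\infty$ and that the family $\{\alpha_{\e,t}^{(k)}(0)\}_\e$ is Cauchy in $L^2$ as $\e\to 0$, both through the same second-moment integral representation. Using the Fourier formula for $p_\e^{(k)}$ from \eqref{sec1-eq.1} together with the independence of the $d$ coordinate processes of $B^H$, the joint characteristic function of the centered Gaussian pair $(B_s^H-B_r^H,\,B_{s'}^H-B_{r'}^H)$ factorizes coordinatewise, yielding
\begin{align*}
    \E\bigl[(\alpha_{\e,t}^{(k)}(0))^2\bigr]=\frac{1}{(2\pi)^{2d}}\int_{0<r<s<t}\!\int_{0<r'<s'<t}\prod_{j=1}^d I_j(\lambda,\mu,\lambda';\e)\,\d r\,\d s\,\d r'\,\d s',
\end{align*}
where $\lambda=(s-r)^{2H}$, $\lambda'=(s'-r')^{2H}$, $\mu=\E[(B_s^{H,1}-B_r^{H,1})(B_{s'}^{H,1}-B_{r'}^{H,1})]$, and each factor
\begin{align*}
    I_j=\int_{\mathbb{R}^2}y^{k_j}z^{k_j}\exp\!\bigl(-\tfrac12\bigl[(\lambda+\e)y^2+2\mu yz+(\lambda'+\e)z^2\bigr]\bigr)\,\d y\,\d z
\end{align*}
is a two-dimensional Gaussian moment.

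Next I would evaluate $I_j$ explicitly via Wick's formula. With $\Delta_\e=(\lambda+\e)(\lambda'+\e)-\mu^2$, one obtains $I_j=(2\pi/\Delta_\e^{k_j+1/2})\,P_{k_j}(\lambda+\e,\mu,\lambda'+\e)$, where $P_{k_j}$ is a polynomial homogeneous of degree $k_j$ in its three arguments. The decisive combinatorial point is that in any Wick pairing of $k_j$ copies of $y$ with $k_j$ copies of $z$, the number of cross $y$–$z$ pairs must have the same parity as $k_j$; when $k_j$ is odd at least one cross pair is therefore forced, so every monomial of $P_{k_j}$ carries $\mu$ to a positive odd power. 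Consequently $\prod_{j=1}^d P_{k_j}$ is divisible by $\mu^{\#}$, and using the elementary bound $|\mu|\le\sqrt{\lambda\lambda'}$, one arrives at the pointwise estimate
\begin{align*}
    \prod_{j=1}^d|I_j|\le\frac{C\,|\mu|^{\#}(\lambda+\lambda')^{|k|-\#}}{\Delta_\e^{|k|+d/2}}.
\end{align*}

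After that, I would invoke the local non-determinism (LND) property of fBm to make the time integration tractable. Splitting the domain into the essentially distinct orderings of $r,s,r',s'$ and changing variables to the disjoint subinterval lengths $\tau_1,\tau_2,\tau_3$ produced by each ordering, LND supplies an $\e$-uniform lower bound of the form $\Delta_\e\gtrsim\prod_i\tau_i^{2H}$, while the covariance formula for fBm gives a matching upper bound for $|\mu|$. Plugging these into the estimate above dominates the second moment by a finite sum of elementary integrals of the form $\int\!\int\!\int\prod_i\tau_i^{\beta_i}\,\d\tau_1\,\d\tau_2\,\d\tau_3$, whose convergence at the origin encodes precisely the three conditions in the theorem: $H<1/d$ from the baseline $\Delta_\e^{d/2}$ factor, $H<2/(2|k|+d)$ from the combined denominator exponent $|k|+d/2$, and $H<1/(|k|+d-\#)$ from the $\mu^{\#}$ numerator cancellation. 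Since the same bound is $\e$-uniform and the integrand converges pointwise as $\e\to 0$, dominated convergence promotes this to the Cauchy property for $\{\alpha_{\e,t}^{(k)}(0)\}_\e$, giving existence of the $L^2$ limit.

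The principal obstacle I anticipate lies in the case-by-case LND bookkeeping: one must track, in each of the essentially distinct orderings of the four time points, how the $\mu^{\#}$ numerator interacts with the LND lower bound for $\Delta_\e$, because without exploiting this parity-induced cancellation one only recovers the weaker constraint $H<2/(2|k|+d)$ and misses the $1/(|k|+d-\#)$ threshold, which becomes binding whenever many of the multi-index components $k_j$ are odd.
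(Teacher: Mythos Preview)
This theorem is not proved in the present paper; it is quoted from \cite{yu2021higher} as a background result, and the paper's own contributions (Theorems \ref{sec1-thm.4}--\ref{sec1-thm.3}) treat only the first-order case $|k|=1$. There is therefore no proof in this paper against which to compare your proposal.

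Your outline is nonetheless the standard route, and is essentially the strategy of the cited reference. The Fourier representation of the second moment, the Wick evaluation of each coordinate factor $I_j$, and the parity observation---that an odd $k_j$ forces at least one cross pair in every Wick pairing of $y^{k_j}z^{k_j}$, so that $\prod_j P_{k_j}$ is divisible by $\mu^{\#}$---are all correct, and this is precisely the mechanism behind the $\frac{1}{|k|+d-\#}$ threshold. The subsequent splitting into orderings of $(r,s,r',s')$ together with LND lower bounds for $\lambda\rho-\mu^2$ is also standard; the present paper records the relevant bounds (for its own, narrower purposes) as Lemma~\ref{sec2-lm.1}. As you correctly anticipate in your last paragraph, the genuine work lies in the ordering-by-ordering bookkeeping: the three thresholds do not arise uniformly but from different regions of the time simplex, and the $\mu^{\#}$ gain must be combined with the specific form of $\mu$ in each ordering rather than applied as a single global bound. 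One small gloss to tighten: dominated convergence on $\E[(\alpha_{\e,t}^{(k)}(0))^2]$ alone does not literally give the Cauchy property; you need the analogous integral representation for $\E[(\alpha_{\e_1,t}^{(k)}(0)-\alpha_{\e_2,t}^{(k)}(0))^2]$, to which the same $\e$-uniform majorant applies.
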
 

Note that condition $\frac{2}{2|k|+d}$ is the sharp condition for $d\leq2$. This is because a limit theorem for $\alpha_{t,\e}^{(k)}(x), H=2/3$ was proved by Yu \cite{yu2021higher}, who gave a normalization factor $\big(\log\left(1/\e\right)\big)^{-1}$ when $d=1, |k|=1$ (consistent with the conjecture in \cite{jung2014tanaka}), and when $d=2, |k|=1$, Markowsky
\cite{markow2008} proved the limit theorem for $\alpha_{t,\e}^{(k)}(x), H=1/2$ normalized by the same factor.
\newpage 
In this paper, we focus on the optimal existence condition for $d\geq3$ and some limit theorems for DSLT when $|k|=1$. Without loss of generality, we assume that $k_1=1$ and $k_i=0$ for $i=2,\cdots,d$ for the multi-index $k$. For convenience of writing, we will abbreviate $\alpha_{t,\e}^{(1,0,\cdots,0)}(0)$ as $\alpha_{t,\e}^{(1)}(0)$ in the rest of the paper without causing confusion.  

\textbf{Main results}. The first objective of this paper is to provide a necessary and sufficient condition for the existence of $\alpha_{\e,t}^{(1)}(0)$, which improves the previous result established in \cite{yu2021higher}. Furthermore, we consider some limit theorems of DSLT under divergent conditions.
    
\begin{theorem}\label{sec1-thm.4}
		Suppose that $d\geq3$. Then, the first-order DSLT $\alpha_{t,\e}^{(1)}(0)$ exists in $L^2$ if and only if $H<\frac{3}{2(1+d)}.$
\end{theorem}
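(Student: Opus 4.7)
The plan is to reduce the $L^2$--existence question to the absolute convergence of an explicit four--fold time integral via the Fourier representation, then analyze that integral by local coordinates and scaling.

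\emph{Step 1 (Fourier representation of the second moment).} First I would expand
\[
\E\bigl[\alpha_{\e,t}^{(1)}(0)^2\bigr]=\int_{0<r<s<t}\int_{0<r'<s'<t}\E\bigl[p_\e^{(1)}(B_s^H-B_r^H)\,p_\e^{(1)}(B_{s'}^H-B_{r'}^H)\bigr]\,\d r\,\d r'\,\d s\,\d s',
\]
and use the identity
\[
p_\e^{(1)}(x)=\frac{\imath}{(2\pi)^d}\int_{\mathbb{R}^d}y_1\,e^{\imath\langle x,y\rangle-\e|y|^2/2}\,\d y.
\]
Since the $d$ coordinates of $B^H$ are independent copies, the characteristic function of the pair $(B_s^H-B_r^H,B_{s'}^H-B_{r'}^H)$ factors over coordinates, so the joint $(y,z)$-integral separates into $d$ two-dimensional Gaussian integrals with a marginal covariance determinant $D:=(\sigma_1+\e)(\sigma_2+\e)-\mu^2$, where $\sigma_1=(s-r)^{2H}$, $\sigma_2=(s'-r')^{2H}$, and $\mu=\E\bigl[(B_s^{H,1}-B_r^{H,1})(B_{s'}^{H,1}-B_{r'}^{H,1})\bigr]$. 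Only the coordinate $i=1$ carries the factor $y_1z_1$, contributing $-2\pi\mu/D^{3/2}$, while the other $d-1$ coordinates each contribute $2\pi/\sqrt{D}$. After simplification one obtains
\[
\E\bigl[\alpha_{\e,t}^{(1)}(0)^2\bigr]=\frac{1}{(2\pi)^d}\int\int_{\!\!\!0<r<s,\,0<r'<s'}\frac{\mu(r,s,r',s')}{\bigl[(\sigma_1+\e)(\sigma_2+\e)-\mu^2\bigr]^{(d+2)/2}}\,\d r\,\d s\,\d r'\,\d s',
\]
so the $L^2$-existence of $\alpha_{t,\e}^{(1)}(0)$ is equivalent to the absolute convergence, uniformly in $\e$, of this integral at $\e=0$.

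\emph{Step 2 (Reduction by ordering and local coordinates).} By symmetry I assume $r\leq r'$ and split the region into the three orderings $r<s<r'<s'$, $r<r'<s<s'$, and $r<r'<s'<s$. In each case I introduce the three gap variables $(a,b,c)$ between consecutive ordered times and use the identity
$\mu=\tfrac12\bigl[|s-r'|^{2H}+|r-s'|^{2H}-|s-s'|^{2H}-|r-r'|^{2H}\bigr]$
to write $\mu$ as a symmetric combination of $2H$-th powers of sums of the gaps. The outer time $r$ integrates over a compact interval producing a harmless factor bounded by $t$, so everything reduces to a three-dimensional integral over $(a,b,c)\in(0,t)^3$.

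\emph{Step 3 (Upper bound: sufficiency of $H<3/(2(d+1))$).} In each case I would produce the estimates $|\mu|\lesssim(a+b+c)^{2H}$ (trivial) and the local bound near the diagonal $|\mu|\lesssim H|1-2H|\,a\,c\,b^{2H-2}$ obtained by applying the mean value theorem twice in the non-overlapping case $r<s<r'<s'$, plus analogous bounds obtained by Taylor expansion in the overlapping cases. For $\rho=\sigma_1\sigma_2-\mu^2$ I would use that $\rho$ is the determinant of the covariance matrix of the Gaussian pair, which is strictly positive, combined with the scaling identity $\rho(\lambda a,\lambda b,\lambda c)=\lambda^{4H}\rho(a,b,c)$; a compactness/homogeneity argument on the simplex $\{a+b+c=1\}$ yields $\rho\gtrsim(abc)^{\alpha}$ with the correct scaling $\rho\gtrsim \lambda^{4H}$ along the radial direction. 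Passing to spherical coordinates on $(a,b,c)$, the integrand has homogeneity $-2H(d+1)$ in the radial variable while the volume element contributes $r^2$, so the radial integral converges at the origin iff $2-2H(d+1)>-1$, i.e. iff $H<3/(2(d+1))$. The remaining angular integral is finite after checking the boundary strata $a=0$, $b=0$, $c=0$ separately with the refined bounds above. The domain is bounded, so there is no issue at infinity.

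\emph{Step 4 (Lower bound: necessity).} For the converse, I would restrict to the non-overlapping subregion $r<s<r'<s'$ with gaps $a,c\in(0,\delta)$, $b\in(\delta,2\delta)$ and use the two-sided asymptotic $|\mu|\asymp a\,c\,b^{2H-2}$ (valid there since $H<1/2$, which holds whenever $d\geq 3$ and $H\geq 3/(2(d+1))$), together with $\rho\leq\sigma_1\sigma_2=(ac)^{2H}$, to obtain a pointwise lower bound on the integrand of order $(ac)^{1-H(d+2)}b^{2H-2-\cdots}$. Passing again to spherical coordinates on the cone $\{a,c\ll b\}$ rescaled so all three gaps are of the same order $\lambda$, the radial integrand is of order $\lambda^{2-2H(d+1)}$, which is not integrable at the origin once $H\geq 3/(2(d+1))$. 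Hence the second moment diverges in that range.

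\emph{Main obstacle.} The crux is Step 3: obtaining the sharp lower bound $\rho\gtrsim (\text{gap})^{4H}$ uniformly in direction across the three orderings, especially in the overlapping configurations where $\sigma_1\sigma_2$ and $\mu^2$ may be of comparable size, so the cancellation inside $\rho$ has to be controlled. The standard device is to express $\rho$ as the conditional variance of one fBm increment given the other and then bound this conditional variance from below by the variance of an appropriate sub-increment that is measurable with respect to a $\sigma$-algebra on which the conditioning is transparent, thus exhibiting the required homogeneity and strict positivity.
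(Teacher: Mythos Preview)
Your sufficiency argument (Steps 1--3) is essentially sound and in fact more direct than the paper's. The paper does not attack the full second-moment integral
\[
\int_\D\frac{\mu}{[(\e+\lambda)(\e+\rho)-\mu^2]^{\frac d2+1}}
\]
head-on; instead it splits $\alpha_{t,\e}^{(1)}(0)=I_1(f_{1,\e})+J_\e$ via the Wiener chaos expansion, handles the tail $J_\e=\sum_{m\ge2}I_{2m-1}$ by bounding the difference integrand $\mu[(\lambda\rho-\mu^2)^{-d/2-1}-(\lambda\rho)^{-d/2-1}]\le K\mu^3(\lambda\rho-\mu^2)^{-d/2-1}(\lambda\rho)^{-1}$ (Lemma~\ref{sec6-pro.1}), and then treats the first chaos on $\D_3$ by an L'H\^opital argument. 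Your homogeneity/spherical--coordinate reduction bypasses this splitting: the radial exponent $2-2H(d+1)$ gives the threshold, and the finiteness of the angular integral is exactly the content of Lemma~\ref{sec7.lm.7-4}. Be aware, though, that the ``compactness gives $\rho\gtrsim(abc)^\alpha$'' claim is not correct as stated; the actual lower bounds on $\lambda\rho-\mu^2$ are the case-specific ones in Lemma~\ref{sec2-lm.1}, and the boundary analysis on the simplex is real work, not a formality.

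Your necessity argument (Step 4) has a genuine gap. First, the region you single out, $a,c\in(0,\delta)$ and $b\in(\delta,2\delta)$, gives a \emph{finite} lower bound: your pointwise estimate yields $\int_\delta^{2\delta}b^{2H-2}\,\d b\cdot\big(\int_0^\delta a^{1-H(d+2)}\,\d a\big)^2$, and the $a$-integral converges whenever $H<2/(d+2)$, which for $d\ge3$ strictly contains the critical value $3/(2(d+1))$. The divergence sits at the origin in \emph{all three} gap variables simultaneously, not on the slab you chose. Second, and more seriously, in the non-overlapping case $r<s<r'<s'$ one has $\mu_{\mathrm{signed}}=\tfrac12[(a+b+c)^{2H}+b^{2H}-(a+b)^{2H}-(b+c)^{2H}]<0$ for $H<1/2$ (concavity of $x\mapsto x^{2H}$), so the contribution of $\D_3$ to the second-moment integral is \emph{negative}; lower-bounding there gives nothing. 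A correct direct argument would have to work on $\D_2$ (where $\mu_{\mathrm{signed}}>0$) and then rule out cancellation against $\D_3$, which is delicate. The paper avoids this entirely: it gets necessity from the limit theorems (Theorems~\ref{sec1-thm.1}, \ref{sec1-thm.2}, \ref{sec1-thm.3}), whose variance computations are chaos-by-chaos and therefore involve only nonnegative terms $\E[I_{2m-1}(f_{2m-1,\e})^2]$, each shown to blow up at the correct rate.
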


\begin{remark}
	When $d=2$, the existence condition here is consistent with that in \cite{yu2021higher}. We will demonstrate that the condition $H<\frac{3}{2(1+d)}, d\geq3$ is sufficient for the existence of $\alpha_{t,\e}^{(1)}(0)$ in $L^2$ sense in Section \ref{sec6} and the necessity can be obtained directly from the limit theorem given in Theorem \ref{sec1-thm.3}.
\end{remark}

The first limit theorem is to analyze the asymptotic behavior of $\alpha_{t,\e}^{(1)}(0)$ as $\e$ tends to zero, when $H>1/2$ and $d\geq2$.
	\begin{theorem}\label{sec1-thm.1}
		Suppose that $H>\frac12$ and $d\geq2$. Then,
		$$
			\e^{\frac d2+1-\frac1H}\alpha^{(1)}_{t,\e}(0)\stackrel{law}\rightarrow\mathcal{N}(0,\sigma^2),
		$$
		as $\e$ tends to zero, where $\sigma^2=\frac{t^{2H}}{2^{d+2}\pi^d}B\left(\frac1H,\frac d2+1-\frac1H\right)^2$ and $B(\cdot,\cdot)$ denotes the Beta function.
	\end{theorem}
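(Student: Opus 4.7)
I would use the method of moments: show that every moment of $Z_\varepsilon:=\varepsilon^{d/2+1-1/H}\alpha^{(1)}_{\varepsilon,t}(0)$ converges to the corresponding moment of $\mathcal{N}(0,\sigma^2)$; since the Gaussian law is determined by its moments, this yields convergence in distribution. All odd moments vanish by the reflection symmetry $B^H\leftrightarrow -B^H$ (under which $\alpha^{(1)}_{\varepsilon,t}(0)\to -\alpha^{(1)}_{\varepsilon,t}(0)$), so the task reduces to proving $E[Z_\varepsilon^{2m}]\to (2m-1)!!\,\sigma^{2m}$ for every $m\geq 1$.

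\textbf{Fourier and Wick expansion.} Inserting $p_\varepsilon^{(1)}(x)=\frac{\imath}{(2\pi)^d}\int_{\mathbb{R}^d} y_1 e^{\imath\langle x,y\rangle-\varepsilon|y|^2/2}\d y$ into the $2m$-th power of $\alpha^{(1)}_{\varepsilon,t}(0)$ and using the joint Gaussian characteristic function of the increments, Isserlis' theorem on the linear prefactor $\prod_j y_1^{(j)}$ produces the Wick expansion
\[
E\bigl[\alpha^{(1)}_{\varepsilon,t}(0)^{2m}\bigr]=\frac{(-1)^m}{(2\pi)^{md}}\int_{\{0<r_j<s_j<t\}^{2m}}\frac{1}{(\det M)^{d/2}}\sum_{\pi}\prod_{(j,k)\in\pi}(M^{-1})_{jk}\prod_{j=1}^{2m}\d r_j\d s_j,
\]
where $\pi$ runs over pairings of $\{1,\dots,2m\}$ and $M_{jk}=\mu_{jk}+\varepsilon\delta_{jk}$ with $\mu_{jk}=\mathrm{Cov}(B^H_{s_j}-B^H_{r_j},B^H_{s_k}-B^H_{r_k})$. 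For $m=1$ this collapses to the closed form $E[\alpha^{(1)}_{\varepsilon,t}(0)^2]=\frac{1}{(2\pi)^d}\int\frac{\mu}{[(u^{2H}+\varepsilon)(u'^{2H}+\varepsilon)-\mu^2]^{d/2+1}}\d r\d s\d r'\d s'$, which I would analyze first to identify $\sigma^2$.

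\textbf{Asymptotics via rescaling.} For $H>1/2$ the covariance admits the representation $\mu_{jk}=H(2H-1)\int_{r_j}^{s_j}\int_{r_k}^{s_k}|x-y|^{2H-2}\d x\d y$. Rescaling the increment lengths $u_j:=s_j-r_j=\varepsilon^{1/(2H)}\tilde u_j$ places the integrand on its natural scale: $M_{jj}=\varepsilon(\tilde u_j^{2H}+1)$, and in the dominant region (macroscopically separated $r_j$'s) $\mu_{jk}\approx\varepsilon^{1/H}H(2H-1)\tilde u_j\tilde u_k|r_j-r_k|^{2H-2}$. Since $H<1$ gives $\varepsilon^{1/H}\ll\varepsilon$, the matrix $M$ is diagonally dominant, so $(M^{-1})_{jk}\approx -\mu_{jk}/(M_{jj}M_{kk})$ for $j\neq k$, and each pairing contributes an integrand that factors into $m$ independent copies of the second-moment kernel. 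Combining the Beta-function identity $\int_0^\infty \tilde u(\tilde u^{2H}+1)^{-(d/2+1)}\d\tilde u=B(1/H,d/2+1-1/H)/(2H)$ with the spatial integral $\int_0^t\int_0^t|r-r'|^{2H-2}\d r\d r'=t^{2H}/[H(2H-1)]$, summing over the $(2m-1)!!$ pairings reproduces the Gaussian moment $(2m-1)!!\,\sigma^{2m}$ after multiplication by the prescribed normalization $\varepsilon^{m(d+2-2/H)}$.

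\textbf{Main obstacle.} The main work lies in making this asymptotic rigorous. One needs uniform control of $(M^{-1})_{jk}$ through a Neumann-type expansion in the small parameter $\varepsilon^{1/H-1}$, together with dominated-convergence envelopes showing that the non-dominant regions of integration---in particular $|r_j-r_k|\lesssim\varepsilon^{1/(2H)}$, large $\tilde u_j$, and near-coincident configurations where $\det M$ almost degenerates---contribute only at strictly lower order. This bookkeeping is where the assumption $H>1/2$ enters crucially (via $\varepsilon^{2/H}\ll\varepsilon^2$ and the integrability of $|r-r'|^{2H-2}$), and is the principal technical challenge of the proof.
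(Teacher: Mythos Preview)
Your moment-method strategy is sound in principle and would eventually yield the result, but the paper takes a different and considerably shorter route. Instead of controlling all even moments, the paper exploits the Wiener chaos decomposition $\alpha^{(1)}_{t,\e}(0)=I_1(f_{1,\e})+\sum_{m\ge 2}I_{2m-1}(f_{2m-1,\e})$ together with the elementary fact that $I_1(f_{1,\e})$, being in the first chaos, is \emph{already} Gaussian. The proof then reduces to two variance computations: (a) show $\e^{d+2-2/H}\,\E\big[\alpha^{(1)}_{t,\e}(0)^2\big]\to\sigma^2$, and (b) show the same limit for $\e^{d+2-2/H}\,\E\big[I_1(f_{1,\e})^2\big]$. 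Orthogonality of chaoses then forces $\e^{d/2+1-1/H}\sum_{m\ge 2}I_{2m-1}(f_{2m-1,\e})\to 0$ in $L^2$, and Gaussianity of the surviving first chaos gives the CLT for free. Your diagonal-dominance heuristic (off-diagonal $\mu_{jk}\sim\e^{1/H}\ll\e\sim M_{jj}$ when $H>1/2$) is exactly the mechanism behind (a) and (b): in the paper it manifests as the vanishing of the $\mathcal D_1,\mathcal D_2$ contributions and the fact that in $\mathcal D_3$ the $\mu^2$ correction in $\det(\e I+\Sigma)$ is negligible. What you gain from your approach is a self-contained argument that bypasses any Malliavin or chaos machinery; what you lose is economy---you must carry the full Wick combinatorics and the Neumann-expansion error control through every $m$, whereas the paper never needs to look beyond second moments.
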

	Moreover, we study two limit theorems of the case $\frac{3}{2(d+1)}<H<\frac12$ and the critical case $H=\frac{3}{2(d+1)}$ for $d\geq3$. 
	\begin{theorem}\label{sec1-thm.2}
		Suppose that $\frac{3}{2(1+d)}<H<\frac12$ and $d\geq3$. Then, the random variable
		$$
			\e^{\frac d2+\frac12-\frac{3}{4H}}\alpha^{(1)}_{t,\e}(0)\stackrel{law}\rightarrow\mathcal{N}(0,\hat{\sigma}^2),
		$$
		as $\e$ tends to zero, where $\hat{\sigma}^2$ defined in \eqref{sec4-eq.14} is a constant only depending on $H$, $d$ and $t$.
	\end{theorem}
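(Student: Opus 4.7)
The plan is to combine the Fourier representation of $\alpha^{(1)}_{t,\e}(0)$ with a Wiener chaos decomposition, and to show that the first chaos captures the asymptotic behavior while higher chaoses are negligible after normalization.

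First I would compute $\E[(\alpha^{(1)}_{t,\e}(0))^2]$ in closed form. Starting from
\begin{equation*}
\alpha^{(1)}_{t,\e}(0) = -\frac{\imath}{(2\pi)^d}\int_0^t\int_0^s\int_{\mathbb{R}^d} y_1\, e^{\imath\langle B^H_s-B^H_r,\, y\rangle-\e|y|^2/2}\,dy\,dr\,ds,
\end{equation*}
and integrating out the $2d$-dimensional Gaussian in $(y,y')$, one obtains
\begin{equation*}
\E\bigl[(\alpha^{(1)}_{t,\e}(0))^2\bigr] = \frac{1}{(2\pi)^d}\int_{D}\frac{\mu}{(\lambda_1\lambda_2-\mu^2)^{(d+2)/2}}\,dr\,ds\,dr'\,ds',
\end{equation*}
where $D=\{0<r<s<t,\,0<r'<s'<t\}$, $\lambda_1=(s-r)^{2H}+\e$, $\lambda_2=(s'-r')^{2H}+\e$, and $\mu=\frac{1}{2}(|s-r'|^{2H}+|r-s'|^{2H}-|s-s'|^{2H}-|r-r'|^{2H})$. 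Rescaling the time variables at the natural scale $\e^{1/(2H)}$ extracts a factor $\e^{-(d+1-3/(2H))}$, and a dominated-convergence argument then yields $\E[(\alpha^{(1)}_{t,\e}(0))^2]\sim\hat\sigma^2\e^{-(d+1-3/(2H))}$ with $\hat\sigma^2$ defined as in \eqref{sec4-eq.14}. The hypothesis $H>\frac{3}{2(1+d)}$ is exactly what makes the rescaled limit kernel integrable.

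Next I would decompose $\alpha^{(1)}_{t,\e}(0)=\sum_{q\geq 1}J_q^\e$ into Wiener chaos components; by the parity of $p_\e^{(1)}$ only odd $q$ contribute. A direct computation of the first-chaos projection (using $\E[p_\e^{(1)}(B^H_s-B^H_r)(B^{H,j}_s-B^{H,j}_r)]=0$ for $j\neq 1$ together with an explicit Gaussian integration for $j=1$) gives
\begin{equation*}
J_1^\e = \int_0^t\int_0^s \frac{B^{H,1}_s-B^{H,1}_r}{((s-r)^{2H}+\e)\bigl(2\pi((s-r)^{2H}+\e)\bigr)^{d/2}}\,dr\,ds,
\end{equation*}
which lies in the first Wiener chaos and is therefore Gaussian. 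A direct second-moment calculation shows $\mathrm{Var}(J_1^\e)$ has precisely the same leading scaling $\hat\sigma^2\e^{-(d+1-3/(2H))}$ as $\mathrm{Var}(\alpha^{(1)}_{t,\e}(0))$. For $q\geq 3$, I would bound $\|J_q^\e\|_{L^2}$ using the Hermite expansion of $e^{\imath\langle B^H_s-B^H_r,y\rangle}$ together with a power-counting argument to obtain $\mathrm{Var}(J_q^\e)=o(\e^{-(d+1-3/(2H))})$. Combining first-chaos Gaussianity with the negligibility of higher chaoses then yields $\e^{d/2+1/2-3/(4H)}\alpha^{(1)}_{t,\e}(0)\to\mathcal{N}(0,\hat\sigma^2)$ in law.

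The hard part will be the sharp negligibility estimates for the higher chaoses $q\geq 3$, combined with the sign-indefiniteness of $\mu$ in the range $H<\frac{1}{2}$. Controlling $\mathrm{Var}(J_q^\e)$ demands a careful split of the time-integration domain (near-diagonal versus off-diagonal), an elementary inequality of the form $\lambda_1\lambda_2-\mu^2\geq c\,\e(\lambda_1+\lambda_2)$ to tame the singular factor, and a delicate power count in which $H>\frac{3}{2(1+d)}$ appears exactly as the threshold for the relevant integrals to converge. A secondary subtlety is that the dominated-convergence step for the variance must be applied via absolute bounds on the integrand even though $\mu$ changes sign across the domain $D$.
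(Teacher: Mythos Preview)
Your strategy has a genuine gap: the claim that the first chaos $J_1^\e$ carries the full limiting variance $\hat\sigma^2$ while the chaoses of order $q\ge 3$ are $o\bigl(\e^{-(d+1-3/(2H))}\bigr)$ is false in the regime $\frac{3}{2(1+d)}<H<\frac12$. In fact, after the rescaling $\e^{\frac d2+\frac12-\frac{3}{4H}}$, \emph{every} odd chaos $I_{2m-1}(f_{2m-1,\e})$ has a strictly positive limiting variance $\hat\sigma_m^2$, and the limit variance is the sum $\hat\sigma^2=\sum_{m\ge 1}\hat\sigma_m^2$ (this is precisely how \eqref{sec4-eq.14} arises in the paper, via Lemma~\ref{sec2-lm.3} and Lemma~\ref{sec4-lm.4}). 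The paper even flags this phenomenon explicitly: what drives the limit is ``the whole sequence $\alpha^{(1)}_{t,\e}$, not the first chaos''. Your ``first chaos dominates'' mechanism is exactly the one the paper uses for Theorem~\ref{sec1-thm.1} (the case $H>\frac12$), but it breaks down once $H<\frac12$.

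Concretely, your power-counting step ``$\mathrm{Var}(J_q^\e)=o(\e^{-(d+1-3/(2H))})$ for $q\ge 3$'' would fail: by Lemma~\ref{sec2-lm.3} one has
\[
\E\bigl[I_{2m-1}(f_{2m-1,\e})^2\bigr]=c_{m,d}\int_\D\frac{\mu^{2m-1}}{[(\e+\lambda)(\e+\rho)]^{\frac d2+m}}\,\d r\,\d s\,\d r'\,\d s',
\]
and the natural rescaling $(a,b,c)\mapsto \e^{1/(2H)}(x,y,z)$ yields, for every $m\ge 1$, the \emph{same} prefactor $\e^{-(d+1-3/(2H))}$ times a finite, nonzero integral. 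There is no extra decay in $m$ at the level of scaling; the only $m$-dependence sits in the kernel, and summing those kernels produces the closed form \eqref{sec4-eq.14}. Because of this, asymptotic normality cannot come from Gaussianity of $J_1^\e$ alone; one needs a multi-chaos CLT. The paper obtains it via Lemma~\ref{sec2-lm.4} (the Hu--Nualart extension of the fourth moment theorem), checking (i) the chaos-by-chaos variance limits (Lemma~\ref{sec4-lm.4}), (ii) vanishing of all nontrivial contractions $f_{2m-1,\e}\otimes_p f_{2m-1,\e}$ (Lemma~\ref{sec4-lm.5}), and (iii)--(iv) summability and tail uniformity via Lemma~\ref{sec4-lm.1}. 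Any correct proof in this regime must supply some form of step (ii); yours bypasses it, which is why the argument cannot close.
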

	\begin{remark}
		From the proof of Theorem \ref{sec1-thm.2}, we see that for every $m\in\mathbb{N}$ and $\frac{3}{2(1+d)}<H<\frac12$, the projection of $m$-th Wiener chaos of $\alpha_{t,\e}$ satisfies a central limit theorem when normalized by $\e^{\frac d2+\frac 12-\frac{3}{4H}}$ (see Corollary \ref{sec4-co.1}). Note that the lower bound of $H$ equals $3/4$ and the multiplicative factor $\e^{\frac d2+\frac 12-\frac{3}{4H}}$ degenerates to $\e^{1-\frac{3}{4H}}$ in the case where $d=1$. This result is consistent with \cite[Theorem 4.2]{jaramillo2017asymptotic}. Moreover, when $H=1/2$, our result is in line with the very recent result presented in \cite[Theorem 1.4]{xu2024central}, provided that we set $|k|=1$.  
	\end{remark}
	\begin{theorem}\label{sec1-thm.3}
		Suppose that $H=\frac{3}{2(1+d)}$ and $d\geq3$. Then, the random variable 
		$$
        \big(\log\left(1/\e\right)\big)^{-\frac12}\alpha^{(1)}_{t,\e}(0)\stackrel{law}\rightarrow\mathcal{N}(0,\bar{\sigma}^2),
		$$
		as $\e$ tends to zero, where $\bar{\sigma}^2$ defined in \eqref{sec5-5.6sigma2} is a constant only depending on $H$, $d$ and $t$.
	\end{theorem}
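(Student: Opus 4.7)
The approach is Wiener chaos decomposition combined with the fourth moment method and a truncation argument. Starting from the Fourier representation
$$\alpha^{(1)}_{t,\e}(0)=-\frac{\imath}{(2\pi)^{d}}\int_{0}^{t}\!\!\int_{0}^{s}\int_{\mathbb{R}^{d}}y_{1}\,e^{\imath\langle B_{s}^{H}-B_{r}^{H},y\rangle-\frac{\e}{2}|y|^{2}}\,dy\,dr\,ds,$$
one expands $e^{\imath\langle B_{s}^{H}-B_{r}^{H},y\rangle}$ in products of Hermite polynomials (as in the proofs of Theorems \ref{sec1-thm.1} and \ref{sec1-thm.2}) to obtain $\alpha^{(1)}_{t,\e}(0)=\sum_{m\text{ odd}}F_{m,\e}$, where $F_{m,\e}$ belongs to the $m$-th Wiener chaos. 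Only odd $m$ appear because $p_{\e}^{(1)}$ is odd in the first coordinate and even in the others.

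\noindent The heart of the proof is the chaos-by-chaos variance analysis at the critical exponent. By the remark following Theorem \ref{sec1-thm.2}, each projection $F_{m,\e}$ in the subcritical regime $\frac{3}{2(1+d)}<H<\frac{1}{2}$ satisfies a CLT under the normalization $\e^{d/2+1/2-3/(4H)}$; at $H=\frac{3}{2(1+d)}$ this exponent vanishes, and I expect the polynomial rate to be replaced by a logarithmic one,
$$\operatorname{Var}(F_{m,\e})=c_{m}\log(1/\e)+O(1)\qquad\text{as }\e\to 0,$$
for constants $c_{m}\geq 0$ with $\sum_{m\text{ odd}}c_{m}=\bar{\sigma}^{2}$ (the sum given in \eqref{sec5-5.6sigma2}). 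To establish this, I would write $\operatorname{Var}(F_{m,\e})$ as a 4-fold integral over $\{0<r<s<t,\,0<r'<s'<t\}$ whose integrand is $\rho(s,r;s',r')^{m}$ (with $\rho$ the covariance of $B_{s}^{H,1}-B_{r}^{H,1}$ and $B_{s'}^{H,1}-B_{r'}^{H,1}$) multiplied by powers of $(s-r)$, $(s'-r')$ and $\e$ coming from the determinants of the associated covariance matrices, then perform the change of variables $u=s-r$, $v=s'-r'$, $w=r-r'$ followed by the rescaling $u,v\mapsto\e^{1/(2H)}u,\e^{1/(2H)}v$, which concentrates the singular region and, at the critical relation $H=\frac{3}{2(1+d)}$, forces the scaling dimensions to match and produces the logarithm.

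\noindent With the chaos-by-chaos asymptotics in hand, the CLT follows by truncation and the multivariate fourth moment theorem of Peccati-Tudor. For each fixed odd $M$, the normalized tuple $\bigl(F_{m,\e}/\sqrt{\log(1/\e)}\bigr)_{m\leq M,\,m\text{ odd}}$ satisfies its hypotheses: the required contractions $\|f_{m,\e}\otimes_{r}f_{m,\e}\|$ are $o(\log(1/\e))$ (by the same change of variables as above), and the chaos projections are orthogonal, so the joint limit is a vector of independent centered Gaussians. Consequently $\sum_{m\leq M,\,m\text{ odd}}F_{m,\e}/\sqrt{\log(1/\e)}\to\mathcal{N}\bigl(0,\sum_{m\leq M}c_{m}\bigr)$, and letting $M\to\infty$ together with a uniform tail bound $\operatorname{Var}\bigl(\sum_{m>M,\,m\text{ odd}}F_{m,\e}\bigr)=o(\log(1/\e))$ (uniformly in $\e$) finishes the proof by a standard diagonal approximation. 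The principal obstacle is this uniform tail estimate: because $\rho$ changes sign for $H<\frac{1}{2}$, absolute-value bounds lose too much, and controlling high-order chaos uniformly in $m$ requires cancellation-based arguments such as integration by parts in the Malliavin sense or a careful splitting of the integration domain into signed subregions.
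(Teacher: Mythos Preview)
Your overall strategy coincides with the paper's: chaos expansion (Lemma~\ref{sec2-lm.5}), per-chaos variance asymptotics (Lemma~\ref{sec5-lm.1}), vanishing contractions (Lemma~\ref{sec5-lm.2}), and then the Hu--Nualart criterion (Lemma~\ref{sec2-lm.4}) to pass from finite truncations to the full sum. One minor technical point: the paper extracts the logarithm by rescaling \emph{all three} gap variables $(a,b,c)\mapsto\e^{1/(2H)}(x,y,z)$ and applying L'H\^opital's rule in the resulting radial variable; your proposal to rescale only $u,v$ and leave $w$ fixed would not isolate the logarithm so directly, since the covariance $\mu$ couples all three scales.

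The substantive gap is your assessment of the uniform tail bound. You call it the principal obstacle and propose cancellation-based arguments (Malliavin integration by parts, signed domain splitting) on the grounds that absolute-value bounds lose too much when the increment covariance changes sign for $H<\tfrac12$. This is precisely where the paper's approach is simpler than you anticipate. Lemma~\ref{sec2-lm.3} gives the $(2m-1)$-th chaos variance in the closed form
\[
\frac{4m}{(2\pi)^{d}d}\binom{m+\tfrac{d}{2}-1}{m}\int_{\D}\frac{\mu^{2m-1}}{[(\e+\lambda)(\e+\rho)]^{d/2+m}}\,\d r\,\d s\,\d r'\,\d s',
\]
with $\mu$ already the \emph{absolute value} of the covariance. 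Since $\gamma:=\mu^{2}/[(\e+\lambda)(\e+\rho)]<1$ pointwise (local nondeterminism), the series over $m$ sums explicitly via the generating-function identity $\sum_{m\geq 1}m\binom{m+d/2-1}{m}\gamma^{m-1}=\tfrac{d}{2}(1-\gamma)^{-d/2-1}$. This collapses both conditions (iii) and (iv) of Lemma~\ref{sec2-lm.4} to the finiteness of the single integral \eqref{sec5-5.6sigma2}, handled by Lemma~\ref{sec7.lm.7-4}. No cancellation is needed; absolute-value bounds plus the generating-function trick are enough. Your proposed route is not wrong in principle, but it bypasses the mechanism that actually makes the tail tractable.
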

	\begin{remark}
		As Corollary \ref{sec5-co.1} shows, the chaotic component $I_{2m-1}$ (to be specified later in Lemma \ref{sec2-lm.5}) satisfies $\big(\log\left(1/\e\right)\big)^{-\frac12}I_{2m-1}\stackrel{law}\rightarrow\mathcal{N}(0,\bar{\sigma}_m^2)$ for some constant $\bar{\sigma}_m^2$ defined in Corollary  \ref{sec5-co.1}. It is worth pointing out that $H=3/4$ when $d=1$, and this finding aligns with \cite[Theorem 1.2]{yu2020asymptotic}, which strongly supports the validity of our result.
	\end{remark}

It is surprising to remark that the normalization factor $ (\log\left(1/\e\right))^{-\frac12}$ in Theorem \ref{sec1-thm.3} is different from the normalization factor $ (\log\left(1/\e\right))^{-1}$ in  \cite{markow2008} and \cite{yu2021higher}, because the term that affects the limit behavior in Theorem  \ref{sec1-thm.3} is the whole sequence $\alpha^{(1)}_{t,\e}$, not the first chaos of $\alpha^{(1)}_{t,\e}$. This phenomenon was also revealed by  Jaramillo and Nualart  \cite[P.671]{jaramillo2017asymptotic}.
    
The primary challenge in studying the asymptotic behavior of DSLT for high-dimensional fBm stems from the computational complexity of multiple integrals. To establish Theorem \ref{sec1-thm.4}, we initially show that the sum of chaotic component of $\alpha_{t,\e}^{(1)}(0)$, starting from the 2nd, converges in $L^2$ when $H<\frac{3}{2(1+d)}$, and then analyze the limit of the first Wiener chaos as $\e$ tends to zero. The proof of the central limit theorem for $\e^{\frac d2+1-\frac 1H}\alpha_{t,\e}^{(1)}(0)$ follows from estimations of its $L^2$-norm. To obtain Theorems \ref{sec1-thm.2} and \ref{sec1-thm.3}, we will use the method of chaos expansion, which has been widely applied in the study of convergence and asymptotic properties for both SLT and DSLT (see \cite{das2022existence, hu2001self,jaramillo2019functional}). In addition, we will employ a general central limit theorem established in \cite{hu2005renormalized} (also can be found in \cite[P.125]{nourdin2012normal}) to complete the proof of Theorems \ref{sec1-thm.2} and \ref{sec1-thm.3}.
	
	The properties of $\alpha_{t,\e}^{(k)}(0)$ for $|k|\geq2$ are also of interest. One would expect a sufficient and necessary condition for the existence of $\alpha_{t,\e}^{(k)}(0)$ and some central limit theorems to non-existent situation, but this remains unproven. Nevertheless, we venture the following conjectures.
	\vspace{8pt}\\\textbf{Conjectures}:
	\begin{enumerate}
		\item [(1)] If $|k|\geq1$ and $d\geq3$, $\alpha_{t,\e}^{(k)}(0)-\E\big(\alpha^{(k)}_{t,\e}(0)\big)$ converges in $L^2$ if and only if $H<\frac{3}{2(|k|+d)}$, where we set $\E\big(\alpha^{(k)}_{t,\e}(0)\big)=0$ if $|k|$ is odd.
		\item [(2)] If $\frac{3}{2(|k|+d)}<H<\frac{5+(-1)^{|k|}}{8}$, $|k|\geq1$ and $d\geq3$, $\e^{\frac{d+|k|}{2}-\frac 3{4H}}\big[\alpha^{(k)}_{t,\e}(0)-\E\big(\alpha^{(k)}_{t,\e}(0)\big)\big]$ converges in distribution to a normal law as $\e$ tends to zero.
		\item [(3)] If $H=\frac{3}{2(|k|+d)}$, $|k|\geq1$ and $d\geq3$, $\big(\log\left(1/\e\right)\big)^{-\frac12}\big[\alpha^{(k)}_{t,\e}(0)-\E\big(\alpha^{(k)}_{t,\e}(0)\big)\big]$ converges in distribution to a normal law as $\e$ tends to zero.
	\end{enumerate}
	
	The paper is organized as follows. In section \ref{sec2}, we will recall some preliminary results for the integrals encountered in the proof of the main theorems in three cases, chaotic decomposition of $\alpha_{t,\e}^{(1)}(0)$ and a central limit theorem via chaos expansion. The sections \ref{sec6}, \ref{sec3}, \ref{sec4} and \ref{sec5} are devoted to proving Theorems \ref{sec1-thm.4}, \ref{sec1-thm.1}, \ref{sec1-thm.2} and \ref{sec1-thm.3}, respectively. Finally, some technical lemmas are proved in Section \ref{sec7}. Throughout this paper, the letter $K$ denotes a generic positive finite constant independent of $\e$ and may change from line to line. 
	\section{Preliminary}\label{sec2}
	\textbf{Notations} The following notations are used in the sequel. \begin{enumerate}[itemsep=2pt, parsep=0pt]
		\item [(1)] Throughout the paper, we set
		$$\lambda=\mathrm{Var}(B_s^H-B_r^{H})=|s-r|^{2H},
		\rho=\mathrm{Var}(B_{s'}^H-B_{r'}^{H})=|s'-r'|^{2H},$$
		\begin{align*}
			\mu&=\left|\E\left((B_s^H-B_r^H)(B_{s'}^H-B_{r'}^{H})\right)\right|\\
            &=\frac12\big||s'-r|^{2H}+|s-r'|^{2H}-|s'-s|^{2H}-|r-r'|^{2H}\big|.
		\end{align*}
		\item [(2)] For $v,u_1,u_2>0$, we define
		$$G(v,u_1,u_2)=\left|\E\left(B_{u_1}^{H}(B_{v+u_2}^H-B_v^H)\right)\right|.$$
		\item [(3)] $\D$ denotes the set $\{(r,s,r',s')|0<r<s<t,0<r'<s'<t, r<r'\}.$
	\end{enumerate}
	
	Subsequently, we will present some fundamental lemmas that are essential in establishing Theorem \ref{sec1-thm.4}, \ref{sec1-thm.1}, \ref{sec1-thm.2} and \ref{sec1-thm.3}. The first lemma provides bounds for the quantity $\lambda\rho-\mu^2$, which can be obtained from \cite[Lemma 3.1]{hu2001self} and \cite[Appendix B]{jung2014tanaka}. Here, $\lambda$, $\rho$, and $\mu$ represent the three quantities of the covariance matrix of the increment of fBm.
	\begin{lemma}\label{sec2-lm.1}
		Divide $\D$ into the following three parts
		$$\D=\D_1\cup \D_2\cup \D_3.$$
		\textbf{Case(i)} Suppose that $\D_1=\{(r,r',s,s')\in[0,t]^4|r<r'<s<s'\}$. Let $r'-r=a,s-r'=b,s'-s=c.$ Then, there exists a constant $K_1$ such that
		$$\lambda\rho-\mu^2\geq K_1\left((a+b)^{2H}c^{2H}+a^{2H}(b+c)^{2H}\right)$$
		and
		$$\mu=\frac12\left((a+b+c)^{2H}+b^{2H}-a^{2H}-c^{2H}\right).$$
		\textbf{Case(ii)} Suppose that $\D_2=\{(r,r',s,s')\in[0,t]^4|r<r'<s'<s\}$. Let $r'-r=a,s'-r'=b,s-s'=c.$ Then, there exists a constant $K_2$ such that
		$$\lambda\rho-\mu^2\geq K_2b^{2H}\left(a^{2H}+c^{2H}\right)$$
		and
		$$\mu=\frac12\left((a+b)^{2H}+(b+c)^{2H}-a^{2H}-c^{2H}\right).$$
		\textbf{Case(iii)} Suppose that $\D_3=\{(r,r',s,s')\in[0,t]^4|r<s<r'<s'\}$. Let $s-r=a,r'-s=b,s'-r'=c.$ Then, there exists a constant $K_3$ such that
		$$\lambda\rho-\mu^2\geq K_3(ac)^{2H}$$
		and
		$$\mu=\frac12\left|(a+b+c)^{2H}+b^{2H}-(a+b)^{2H}-(c+b)^{2H}\right|.$$
	\end{lemma}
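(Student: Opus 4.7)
My plan is to handle the three cases in parallel: first compute $\mu$ explicitly in each configuration, and then establish the lower bound on $\lambda\rho - \mu^2$ via the local nondeterminism (LND) property of fBm.

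For $\mu$, I would apply the covariance formula $\E(B_u^H B_v^H) = \tfrac12(u^{2H}+v^{2H}-|u-v|^{2H})$ to expand
$$\E\bigl((B_s^H - B_r^H)(B_{s'}^H - B_{r'}^H)\bigr) = \tfrac12\bigl(|s'-r|^{2H} + |s-r'|^{2H} - |s'-s|^{2H} - |r'-r|^{2H}\bigr),$$
and in each of the three configurations substitute the concrete expressions for the four distances in terms of $a$, $b$, $c$. This is a direct computation that yields the stated closed-form expressions for $\mu$ (the absolute value being retained in Cases (ii)--(iii) since the sign of the bracketed quantity depends on the relative sizes of $a,b,c$ and on whether $H$ is above or below $1/2$).

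For the lower bound on $\lambda\rho - \mu^2$, the main tool is Berman's local nondeterminism for fBm: for disjoint intervals $I_1,\dots,I_n$ on $[0,t]$ with increments $\Delta_j = B^H_{t_j} - B^H_{t_{j-1}}$, the covariance matrix $\Sigma = \mathrm{Cov}(\Delta_1,\dots,\Delta_n)$ satisfies $\Sigma \succeq c_H\,\mathrm{diag}(|I_1|^{2H},\dots,|I_n|^{2H})$ as quadratic forms, with $c_H > 0$ depending only on $H$. In each case I decompose $(X, Y) := (B_s^H - B_r^H, B_{s'}^H - B_{r'}^H) = A\Delta$, where $\Delta$ is the vector of increments of $B^H$ over the three disjoint pieces of lengths $a,b,c$ naturally determined by the case and $A$ is a $2\times 3$ matrix with $0/1$ entries (in Case (i) the two middle columns of $A$ overlap because the interval $[r',s]$ is shared; in Case (ii) only the middle column contributes to $Y$; in Case (iii) $X$ and $Y$ are the outer increments and the middle piece contributes to neither). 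Applying LND and taking the $2\times 2$ determinant yields
$$\lambda\rho - \mu^2 = \det\mathrm{Cov}(X,Y) \geq c_H^2 \det\!\bigl(A\,\mathrm{diag}(a^{2H},b^{2H},c^{2H})A^T\bigr),$$
which a short calculation evaluates to $a^{2H}b^{2H}+a^{2H}c^{2H}+b^{2H}c^{2H}$ in Case (i), $b^{2H}(a^{2H}+c^{2H})$ in Case (ii), and $(ac)^{2H}$ in Case (iii). For Case (i) one then applies the elementary bound $x^{2H}+y^{2H} \geq \kappa_H (x+y)^{2H}$ (with $\kappa_H$ depending only on $H$) to convert the sum of products bound into the claimed $(a+b)^{2H}c^{2H} + a^{2H}(b+c)^{2H}$ form.

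The main obstacle is the careful bookkeeping of the case decomposition and the invocation of LND in its quadratic-form version; once LND is in hand the remainder is elementary $2\times 2$ linear algebra. Because LND for fBm is classical, this argument reproduces the bounds stated in \cite{hu2001self,jung2014tanaka}.
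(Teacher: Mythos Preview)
Your proposal is correct and matches what the paper does: the paper gives no self-contained proof of this lemma but simply cites \cite[Lemma 3.1]{hu2001self} and \cite[Appendix B]{jung2014tanaka}, and the local-nondeterminism argument you sketch is precisely the mechanism behind those references. The $2\times 3$ matrix decomposition $(X,Y)=A\Delta$ together with $\Sigma\succeq c_H\,\mathrm{diag}(a^{2H},b^{2H},c^{2H})$ and the monotonicity of the determinant under the Loewner order cleanly delivers the three stated lower bounds, and the explicit computation of $\mu$ is a routine substitution.

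One small slip: you say the absolute value is retained in Cases (ii)--(iii), but in Case (ii) the bracket $(a+b)^{2H}+(b+c)^{2H}-a^{2H}-c^{2H}$ is manifestly nonnegative for every $H\in(0,1)$, so no absolute value is needed there (and the paper indeed omits it). If anything, the sign issue arises in Case (i) when $H<1/2$ and $b$ is small, since the limit $b\to 0$ gives $(a+c)^{2H}-a^{2H}-c^{2H}<0$; the paper suppresses this because only $|\mu|$ and $\mu^2$ are used downstream.
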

	\textbf{Wiener chaos expansion} The second lemma is related to the Wiener chaos expansion of $\alpha_{e,\e}^{(1)}(0)$. Before that, we are supposed to explain some notations. We denote by $\mathfrak{H}$ the Hilbert space obtained by taking the completion of the space of step functions endowed with the inner product
	$$\langle\textbf{1}_{[r,s]},\textbf{1}_{[r',s']}\rangle_{\mathfrak{H}}=\E\big((B_s^{H,1}-B_r^{H,1})(B_{s'}^{H,1}-B_{r'}^{H,1})\big),$$
	where $\{B_t^{H,1},t\geq0\}$ is a one-dimensional fBm. The mapping $\textbf{1}_{[0,t]}\mapsto B_t^{H,1}$ can be extended to a linear isometry between $\mathfrak{H}$ and the Gaussian subspace $L^2(\Omega,\mathcal{F},\mathbb{P})$. For any integers $m$, we denote by $\mathfrak{H}^{\otimes m}$ and $\mathfrak{H}^{\odot m}$ the $m$-th tensor product of $\mathfrak{H}$ and the $m$-th symmetric tensor product of $\mathfrak{H}$, respectively. Similarly, for $d$-dimensional fBm $B^H=(B^{H,1},\cdots,B^{H,d})$, we can define the Hilbert space $\mathfrak{H}^d$ and the tensor product spaces $(\mathfrak{H}^d)^{\otimes m}$ and $(\mathfrak{H}^d)^{\odot m}$. For $h=(h^1,\cdots,h^d)\in\mathfrak{H}^d$, we define $B^H(h)=\sum_{j=1}^dB^{H,j}(h^j)$. Then, the operator $h\mapsto B^H(h)$ is a linear isometry between $\mathfrak{H}^d$ and the Gaussian subspace of $L^2(\Omega_1\times\cdots\Omega_d,\mathcal{F}_1\times\cdots\mathcal{F}_d,\mathbb{P}_1\times\cdots\mathbb{P}_d)$. For simplicity and consistency, we will continue to use the notation $L^2(\Omega,\mathcal{F},\mathbb{P})$ to refer to this new Gaussian subspace.

    The $m$-th Wiener chaos of $L^2(\Omega,\mathcal{F},\mathbb{P})$, denoted by $\mathfrak{H}^d_m$, is the closed of subspace $L^2(\Omega,\mathcal{F},\mathbb{P})$ generated by the variables
	$$\left\{\prod_{j=1}^dH_{m_j}(B^{H,j}(h^j))|\sum_{j=1}^dm_j=m,h^j\in\mathfrak{H},\Vert h^j\Vert_{\mathfrak{H}}=1\right\},$$
	where $H_m$ is the $m$-th Hermite polynomial, defined by
	$$H_m(x)=(-1)^me^{\frac{x^2}2}\frac{\d^m}{\d x^m}e^{\frac{x^2}2}.$$
	For $h=(h^1,\cdots,h^d)$, we define the mapping
	$$I_m(h^{\otimes m})=\sum_{i_1,\cdots,i_m=1}^d\prod_{j=1}^dH_{m_j(i_1,\cdots,i_m)}(B^{H,j}(h^j))$$
	for $j=1,\cdots,d$, where $m_j(i_1,\cdots,i_m)$ denotes the number of indices in $(i_1,\cdots,i_m)$ equal to $j$, $\sum_{j=1}^dm_j=m$. The range of $I_m$ is contained in $\mathcal{H}^d_m$. This mapping provides a linear isometry between $(\mathfrak{H}^d)^{\otimes m}$ equipped with $\sqrt{m!}\Vert\cdot\Vert_{(\mathfrak{H}^d)^{\otimes m}}$ and $\mathfrak{H}^d_m$ equipped with $L^2$ norm. Now, the second lemma can be given, which is available in \cite[Lemma 2.2]{yu2024limit}.
	\begin{lemma}\label{sec2-lm.5}
		Given a multi-index $\mathbf{i}_n=(i_1,\cdots,i_n)$, $1\leq i_j\leq d$, we define 
		$$\alpha(\mathbf{i}_{2m-1})=\frac{(2m_1)!\cdots(2m_d)!}{(m_1)!\cdots(m_d)!2^m}$$
		if $n=2m-1$ is odd with the number of components of $\mathbf{i}_{2m-1}$ equal to $1$ is $2m_1-1$ and the number of components equal to $k$ is $2m_k$, for $k=2,\cdots,d$; and $\alpha(\mathbf{i}_{2m-1})=0$ otherwise. Then, we have
		$$
			\alpha_{t,\e}^{(1)}(0)=\sum_{m=1}^\infty I_{2m-1}(f_{2m-1,\e}),
		$$
		where $f_{2m-1,\e}$ is the element of $(\mathfrak{H}^d)^{\otimes (2m-1)}$ given by
		\begin{align*}
			f_{2m-1,\e}(\mathbf{i}_{2m-1};&u_1,\cdots,u_{2m-1})\\
			&=\frac{(2\pi)^{-\frac d2}\alpha(\mathbf{i}_{2m-1})}{(2m-1)!}\int_0^t\int_0^s(\e+|r-s|^{2H})^{-\frac d2-m}\prod_{j=1}^{2m-1}\mathbf{1}_{[r,s]}(u_j)\d r\d s.
		\end{align*}
	\end{lemma}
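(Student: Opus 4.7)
The plan is to expand $p_\e^{(1)}(B_s^H-B_r^H)$ into a Wiener chaos series for each fixed pair $(r,s)$, and then integrate in $(r,s)\in\{0<r<s<t\}$ and swap sum and integral. The per-pair expansion is obtained by combining the Fourier representation of $p_\e^{(1)}$ given in the introduction with the componentwise Hermite expansion of a Gaussian exponential, which is available because $B^{H,1},\ldots,B^{H,d}$ are independent one-dimensional fBms.

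First, I would substitute the Fourier identity $p_\e^{(1)}(x)=\frac{\imath}{(2\pi)^d}\int_{\mathbb{R}^d}y_1\,e^{\imath\langle x,y\rangle-\e|y|^2/2}\,\d y$ with $x=B_s^H-B_r^H$ into the definition of $\alpha_{t,\e}^{(1)}(0)$, and apply, in each coordinate $j$, the identity
$$e^{\imath y_j(B_s^{H,j}-B_r^{H,j})}=e^{-y_j^2|s-r|^{2H}/2}\sum_{n_j=0}^\infty \frac{(\imath y_j|s-r|^H)^{n_j}}{n_j!}H_{n_j}\!\left(\frac{B_s^{H,j}-B_r^{H,j}}{|s-r|^H}\right).$$
Multiplying across $j$ and evaluating the outer $y$-integral against the Gaussian weight $e^{-(\e+|s-r|^{2H})|y|^2/2}$, only terms with $n_1$ odd and $n_2,\ldots,n_d$ even survive. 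Writing $n_1=2m_1-1$, $n_j=2m_j$ for $j\geq 2$ and $m=m_1+\cdots+m_d$, the moment formula $\int y^{2k}e^{-ay^2/2}\,\d y=\sqrt{2\pi/a}\,(2k)!/(2^kk!\,a^k)$ produces the factor $(\e+|s-r|^{2H})^{-d/2-m}\prod_{j=1}^d(2m_j)!/(2^{m_j}m_j!)$, while the accumulated powers of $\imath$ collapse to $(-1)^m$, which combines with the sign from $\alpha_{t,\e}^{(1)}(0)=-\int\!\int p_\e^{(1)}$ to yield an overall sign consistent with the target formula.

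Next, I would identify the resulting sum, for each total order $2m-1$, with a single $d$-dimensional chaos evaluation. By the definition of $I_{2m-1}$ recalled just before the lemma, summing $\prod_{j=1}^d |s-r|^{Hm_j(\mathbf{i})}H_{m_j(\mathbf{i})}(\cdots)$ over all colored multi-indices $\mathbf{i}_{2m-1}$ of a given type produces $\binom{2m-1}{2m_1-1,2m_2,\ldots,2m_d}$ identical copies, one per ordering. Combining this multinomial with the Gaussian moment factor $\prod_j(2m_j)!/(2^{m_j}m_j!)$ and dividing by $(2m-1)!$ yields precisely the weight $\alpha(\mathbf{i}_{2m-1})/(2m-1)!$ per colored multi-index, which is the kernel weight in $f_{2m-1,\e}$. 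Integrating in $(r,s)\in\{0<r<s<t\}$ and exchanging summation with integration then produces the stated formula.

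The main obstacle is the combinatorial bookkeeping: one must carefully match the output of the Gaussian moment computation (indexed by unordered types $(m_1,\ldots,m_d)$) against the definition of the $d$-dimensional chaos $I_{2m-1}$ (which sums over ordered colored multi-indices $\mathbf{i}_{2m-1}$), and verify that the prefactor simplifies to exactly $\alpha(\mathbf{i}_{2m-1})/(2m-1)!$. A secondary step is the justification of the interchange of the chaos summation with the $(r,s)$-integration, which follows from $L^2$-orthogonality of distinct chaoses together with uniform $L^2$-boundedness of the partial sums; the latter ultimately rests on covariance estimates of the type furnished by Lemma \ref{sec2-lm.1}.
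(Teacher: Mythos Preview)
The paper does not supply its own proof of this lemma; it simply quotes it as \cite[Lemma 2.2]{yu2024limit}. Your proposal (Fourier representation of $p_\e^{(1)}$, coordinate-wise Hermite generating function $e^{tz-t^2/2}=\sum_n t^nH_n(z)/n!$, Gaussian moment evaluation of the $y$-integral, and identification with the $d$-dimensional multiple integral $I_{2m-1}$) is exactly the standard derivation of such chaos expansions and is the route taken in the cited reference and its antecedents (e.g.\ \cite{hu2005renormalized}). So there is nothing to compare: your plan \emph{is} the proof.

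One caution on the point you already flagged as the ``main obstacle''. Tracking the powers of $\imath$ carefully, the order-$(2m-1)$ contribution to $-p_\e^{(1)}(B_s^H-B_r^H)$ carries the sign $(-1)^{m+1}$, not $+1$; this is the same alternating sign that appears in the Hu--Nualart expansion of $p_\e$ itself. In other words, the stated kernel $f_{2m-1,\e}$ appears to be missing a factor $(-1)^{m-1}$. This is harmless for everything the present paper does with the lemma, since the only downstream use is through the squared norm in Lemma~\ref{sec2-lm.3} and the contraction norm in Lemmas~\ref{sec4-lm.5} and \ref{sec5-lm.2}, all of which are insensitive to the sign of $f_{2m-1,\e}$. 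But when you write out the combinatorics you will see this sign emerge, so do not be alarmed when your computation does not literally match the displayed $\alpha(\mathbf{i}_{2m-1})$.
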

	
	We then compute the $L^2$-norm of the $(2m-1)$-th Wiener chaos of $\alpha_{\e,t}^{(1)}$ and have the following lemma.
	\begin{lemma}\label{sec2-lm.3}
		The $L^2$-norm of the $(2m-1)$-th Wiener chaos can be interpreted as
		\begin{equation}\label{sec2-eq.1}
			\E\big(I_{2m-1}(f_{2m-1,\e})\big)^2=\frac{4m}{(2\pi)^dd}\left(\begin{array}{c}m+\frac d2-1\\m\end{array}\right)\int_\D\frac{\mu^{2m-1}}{[(\e+\lambda)(\e+\rho)]^{\frac d2+m}}\d r \d s\d r'\d s',
		\end{equation}
		where $\left(\begin{array}{c}m+d/2-1\\m\end{array}\right)=\frac{(m+d/2-1)\times\cdots \times (d/2)}{m!}$ is the generalized binomial coefficient.
	\end{lemma}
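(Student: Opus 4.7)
The plan is to combine the isometry of multiple Wiener--It\^o integrals with the tensor-product decomposition induced by the independence of the components of $B^H$, and then to evaluate the resulting combinatorial sum over multi-indices by a generating-function argument. Since $f_{2m-1,\e}(\mathbf{i}_{2m-1};u)$ is invariant under the symmetric group acting simultaneously on the $2m-1$ pairs $(i_j,u_j)$---because $\alpha(\mathbf{i}_{2m-1})$ depends only on the composition, and $\prod_j\mathbf{1}_{[r,s]}(u_j)$ is symmetric in $u$---the isometry recalled before Lemma \ref{sec2-lm.5} gives $\E[I_{2m-1}(f_{2m-1,\e})^2]=(2m-1)!\,\|f_{2m-1,\e}\|_{(\mathfrak{H}^d)^{\otimes(2m-1)}}^2$, and independence of the components of $B^H$ makes the ambient tensor space split orthogonally along multi-indices, so $\|f_{2m-1,\e}\|^2=\sum_{\mathbf{i}_{2m-1}}\|f_{2m-1,\e}(\mathbf{i}_{2m-1};\cdot)\|_{\mathfrak{H}^{\otimes(2m-1)}}^2$. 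Substituting the explicit formula for $f_{2m-1,\e}$ and using $\langle\mathbf{1}_{[r,s]},\mathbf{1}_{[r',s']}\rangle_\mathfrak{H}^{2m-1}=\mu^{2m-1}$ turns each inner norm into a double integral over $\{0<r<s<t,\,0<r'<s'<t\}$; a symmetry argument in $(r,s)\leftrightarrow(r',s')$ then restricts the integration domain to $\mathcal{D}$ and yields a factor of $2$.

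The core combinatorial step is the identity
$$\sum_{\mathbf{i}_{2m-1}}\alpha(\mathbf{i}_{2m-1})^2=(2m-1)!\,\frac{2m}{d}\binom{m+\frac{d}{2}-1}{m}.$$
Grouping multi-indices by their composition $(m_1,\ldots,m_d)$ with $m_1\geq 1$ and $\sum_i m_i=m$, counting arrangements with the multinomial coefficient $\binom{2m-1}{2m_1-1,2m_2,\ldots,2m_d}$, and rewriting $\alpha^2$ in terms of the central binomials $\binom{2m_i}{m_i}$, the sum reduces to the coefficient of $x^m$ in $A(x)B(x)^{d-1}$, where $B(x)=\sum_{k\geq 0}\binom{2k}{k}x^k=(1-4x)^{-1/2}$ and $A(x)=2xB'(x)=4x(1-4x)^{-3/2}$. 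A direct expansion of the product extracts this coefficient as $4^m\binom{d/2+m-1}{m-1}$, and the identity $\binom{d/2+m-1}{m-1}=\frac{2m}{d}\binom{m+d/2-1}{m}$ completes the evaluation.

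Assembling the prefactor $(2\pi)^{-d}/(2m-1)!$ from squaring the normalization of $f_{2m-1,\e}$, the symmetry factor $2$, and the combinatorial identity yields the claimed constant $\frac{4m}{(2\pi)^d d}\binom{m+d/2-1}{m}$. The principal obstacle is the generating-function computation, which requires recognizing the correct product structure $A(x)B(x)^{d-1}$ and then extracting the coefficient cleanly; the remaining steps are standard applications of the Wiener chaos machinery reviewed before the lemma statement.
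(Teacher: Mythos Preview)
Your proposal is correct and follows essentially the same approach as the paper: both start from the Wiener--It\^o isometry, group multi-indices by their composition $(m_1,\ldots,m_d)$ via the multinomial coefficient, and then evaluate the resulting sum using the generating function $B(x)=(1-4x)^{-1/2}$ for the central binomial coefficients. The only minor difference is in handling the factor $m_1$: the paper uses a symmetry argument to replace $\sum_{m_1\ge 1} m_1\prod_j\binom{2m_j}{m_j}$ by $\tfrac{m}{d}\sum\prod_j\binom{2m_j}{m_j}$ and then reads off the coefficient in $B(x)^d=(1-4x)^{-d/2}$, whereas you absorb the $m_1$ factor into the derivative generating function $A(x)=2xB'(x)$ and extract the coefficient in $A(x)B(x)^{d-1}=4x(1-4x)^{-(d+2)/2}$. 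Both routes are equally short and yield the same constant.
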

	\begin{proof}
		By a straightforward calculation, we have 
		\begin{align*}
			&\E\big(I_{2m-1}(f_{2m-1,\e})\big)^2=(2m-1)!\Vert f_{2m-1,\e}\Vert_{(\mathfrak{H}^d)^{\otimes(2m-1)}}\\
			&=2\cdot(2m-1)!\sum_{m_1+\cdots+m_d=m,m_1\geq1}\frac{(2m-1)!}{(2m_1-1)!(2m_2)!\cdots(2m_d)!}\\
			&\qquad\times\frac{(2\pi)^{-d}\alpha^2(\textbf{i}_{2m-1})}{((2m-1)!)^2}\int_\D\frac{\mu^{2m-1}}{[(\e+\lambda)(\e+\rho)]^{\frac d2+m}}\d r \d s\d r'\d s'\\
			&=\frac{(2\pi)^{-d}}{2^{2m-2}}\left(\sum_{m_1+\cdots+m_d=m,m_1\geq1}\frac{m_1(2m_1)!\cdots(2m_d)!}{(m_1!)^2\cdots(m_d!)^2}\right)\int_\D\frac{\mu^{2m-1}}{[(\e+\lambda)(\e+\rho)]^{\frac d2+m}}\d r \d s\d r'\d s'.
		\end{align*}
		Note that
		$$\sum_{m_1+\cdots+m_d=m,m_1\geq1}\frac{m_1(2m_1)!\cdots(2m_d)!}{(m_1!)^2\cdots(m_d!)^2}=\frac{m}{d}\sum_{m_1+\cdots+m_d=m}\prod_{j=1}^d\left(\begin{array}{c}2m_j\\m_j\end{array}\right).$$
		Then, by the generalized binomial formula, we have
		$$\sum_{m=0}^\infty \left(\begin{array}{c}m+\frac d2-1\\m\end{array}\right)4^mx^m=\frac{1}{(1-4x)^{\frac d2}}=\left(\sum_{n=0}^\infty\left(\begin{array}{c}2n\\n\end{array}\right)x^n\right)^d.$$
		Consequently, by comparing the coefficients in front of $x^m$ on both sides of the above equality, we obtain that
		$$\sum_{m_1+\cdots+m_d=m,m_1\geq1}\frac{m_1(2m_1)!\cdots(2m_d)!}{(m_1!)^2\cdots(m_d!)^2}=\frac{m}{d}\left(\begin{array}{c}m+\frac d2-1\\m\end{array}\right)2^{2m}.$$
		Then, \eqref{sec2-eq.1} follows. The proof is complete.
	\end{proof}
	\vspace{5pt}\\\textbf{Central limit theorem via chaos expansion} We denote by $\mathcal{H}$ the separable Hilbert space. Let $\{e_k,k\geq1\}$ be an orthonormal basis in $\mathcal{H}$ for every $p=0,1,\cdots,m$ and for $f\in\mathcal{H}^{\odot m}$, the contraction of $f$ of order $p$ to be the element of $\mathcal{H}^{\otimes2(n-p)}$ is defined by
	$$f\otimes_pf=\sum_{i_1,\cdots,i_p=1}^\infty\langle f,e_{i_1}\otimes\cdots\otimes e_{i_p}\rangle_{\mathcal{H}^{\otimes p}}\otimes \langle f,e_{i_1}\otimes\cdots\otimes e_{i_p}\rangle_{\mathcal{H}^{\otimes p}}$$
	Obviously, $\mathfrak{H}^{d}$ based on $d$-dimensional fBm is a Hilbert space, whose elements also satisfy the above contraction operation. 
	
	In the seminal paper \cite{nualart2005central}, Nualart and Peccati established a criterion for convergence in distribution to a normal law of a sequence of multiple stochastic integrals. Afterwards, Peccati and Tudor \cite{peccati2005gaussian} gave a multidimensional version of this characterization. Hu and Nualart \cite{hu2005renormalized} extended these results to a slightly more general setting applicable to the renormalization of self-intersection local time of fBm. In the sequel, we will use the central limit theorem established in \cite{hu2005renormalized}, which is the principal method we apply in the proof of Theorems \ref{sec1-thm.2} and \ref{sec1-thm.3}.
	\begin{lemma}\label{sec2-lm.4}
		Let $\{F_k,k\geq1\}$ be a sequence of square integrable and centered random variables with Wiener chaos expansions
		$$F_k=\sum_{m=1}^\infty I_m(f_{m,k}).$$
		Suppose that:
		\begin{enumerate}
			\item[(i)] for every $m\geq1$, $\lim_{k\rightarrow\infty}m!\Vert f_{m,k}\Vert^2_{\mathcal{H}^{\otimes m}}=\sigma_m^2;$
			\vspace{-2mm}
			\item[(ii)] for all $m\geq2$, $p=1,\cdots,m-1$, $\lim_{k\rightarrow\infty}\Vert f_{m,k}\otimes_pf_{m,k}\Vert^2_{\mathcal{H}^{\otimes 2(m-p)}}=0;$
			\item[(iii)] $\sum_{m=1}^\infty \sigma_m^2<\infty;$
			\item[(iv)] $\lim_{N\rightarrow\infty}\lim\sup_{k\rightarrow\infty}\sum_{m=N+1}^\infty m!\Vert f_{m,k}\Vert^2_{\mathcal{H}^{\otimes m}}=0.$
		\end{enumerate}
		Then, $F_k$ converges in distribution to a normal law $\mathcal{N}(0,\sum_{n=1}^\infty \sigma_n^2)$ as $k$ tends to the infinity.
	\end{lemma}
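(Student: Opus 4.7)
The plan is to combine a chaos-level truncation with the multidimensional fourth moment theorem of Peccati and Tudor, and then to pass to the limit in the truncation parameter using hypotheses (iii) and (iv). The strategy is classical in the Malliavin-calculus literature; the present statement is a version tailored to applications where the Wiener chaos expansion has infinitely many components but its tail is controlled uniformly in $k$.

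First I would fix $N\geq 1$ and consider the truncated vector
$$V_k^N=\big(I_1(f_{1,k}),I_2(f_{2,k}),\ldots,I_N(f_{N,k})\big).$$
Because multiple stochastic integrals of different orders are orthogonal in $L^2(\Omega)$, hypothesis (i) makes the covariance matrix of $V_k^N$ converge, as $k\to\infty$, to the diagonal matrix $\mathrm{diag}(\sigma_1^2,\ldots,\sigma_N^2)$. Hypothesis (ii) supplies the contraction conditions $\|f_{m,k}\otimes_p f_{m,k}\|\to 0$ for every $2\leq m\leq N$ and $1\leq p\leq m-1$, which, combined with the marginal convergence of variances, are exactly the hypotheses of the multidimensional fourth moment theorem. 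Invoking that theorem would give
$$V_k^N\stackrel{\mathrm{law}}{\longrightarrow}(Z_1,\ldots,Z_N),$$
where $Z_1,\ldots,Z_N$ are independent centered Gaussians with $\mathrm{Var}(Z_m)=\sigma_m^2$. Summing the components and using the continuous mapping theorem yields that $F_k^N:=\sum_{m=1}^N I_m(f_{m,k})$ converges in law to $\mathcal{N}\bigl(0,\sum_{m=1}^N\sigma_m^2\bigr)$.

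Next I would let $N\to\infty$. Hypothesis (iii) guarantees that $\mathcal{N}\bigl(0,\sum_{m=1}^N\sigma_m^2\bigr)$ converges weakly to $\mathcal{N}\bigl(0,\sum_{m=1}^\infty\sigma_m^2\bigr)$. To upgrade this to convergence of the full sum $F_k$, I would invoke the standard triangular approximation lemma: if $F_k^N\stackrel{\mathrm{law}}{\to}S_N$ as $k\to\infty$, $S_N\stackrel{\mathrm{law}}{\to}S$ as $N\to\infty$, and $\limsup_k \E(F_k-F_k^N)^2\to 0$ as $N\to\infty$, then $F_k\stackrel{\mathrm{law}}{\to}S$. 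The required $L^2$-tail bound is immediate from orthogonality of chaoses, namely
$$\E(F_k-F_k^N)^2=\sum_{m=N+1}^\infty m!\,\|f_{m,k}\|_{\mathcal{H}^{\otimes m}}^2,$$
and hypothesis (iv) is precisely what sends this quantity to $0$ uniformly in large $k$ as $N\to\infty$.

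The step I expect to be the main obstacle is the invocation of the multidimensional fourth moment theorem for vectors whose components sit in different Wiener chaoses. One has to establish joint convergence, not merely marginal convergence, to a Gaussian limit with diagonal covariance; the Peccati-Tudor result ensures precisely this, because any weak limit of Gaussian vectors with diagonal covariances must have independent components, so the contraction condition on each chaotic component, together with convergence of marginal variances, suffices. A secondary subtlety is making the triangular approximation argument rigorous, but this is standard (essentially Billingsley's approximation by proximity) and uses only the uniform $L^2$-smallness of the tail guaranteed by (iv).
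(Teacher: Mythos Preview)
Your proof is correct and follows exactly the standard route: truncate the chaos expansion, apply the Peccati--Tudor multidimensional fourth moment theorem to the finite vector, and close up via the Billingsley-type triangular approximation using (iii) and (iv). This is precisely how the result is established in the references the paper cites.

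Note, however, that the paper does not give its own proof of this lemma. It is quoted from \cite{hu2005renormalized} (and appears also in \cite[p.~125]{nourdin2012normal}) and is used as a black box. So there is no ``paper's proof'' to compare against; your sketch simply reproduces the argument from those sources.
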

	
	The reader is referred to \cite{nourdin2012normal,nualart2006malliavin} and references therein for more details about the theories of chaos decomposition and central limit theorems via chaos expansion.
	\section{Proof of Theorem \ref{sec1-thm.4}}\label{sec6}
	The proof of Theorem \ref{sec1-thm.4} will be accomplished by the following two steps. Firstly, we will prove in Lemma \ref{sec6-pro.1} that the sum of Wiener chaos, commencing from the second term, converges in $L^2$. Secondly, we will show that the limit of the first chaotic component also exists in $L^2$ sense as $\e$ tends to zero.
	\begin{lemma}\label{sec6-pro.1}
		Suppose that $H<\frac{3}{2(d+1)}$ and $d\geq3$. Then, the random variable
		$$J_\e:=\sum_{m=2}^\infty I_{2m-1}(f_{2m-1,\e})$$
		converges in $L^2$ as $\e$ tends to zero.
	\end{lemma}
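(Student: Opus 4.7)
The plan is to show that $\{J_\e\}_{\e>0}$ is Cauchy in $L^2(\Omega)$ as $\e\downarrow 0$. By the orthogonality of distinct Wiener chaoses,
\[
\E[(J_{\e_1}-J_{\e_2})^2]=\sum_{m=2}^\infty (2m-1)!\,\Vert f_{2m-1,\e_1}-f_{2m-1,\e_2}\Vert^2_{(\mathfrak{H}^d)^{\otimes(2m-1)}},
\]
and repeating the computation that led to Lemma~\ref{sec2-lm.3} shows that $(2m-1)!\langle f_{2m-1,\e_1},f_{2m-1,\e_2}\rangle$ takes the same form as \eqref{sec2-eq.1} with $(\e+\lambda)$ and $(\e+\rho)$ replaced by the corresponding mixed factors involving $\e_1,\e_2$. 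Applying the generating function identity
\[
\sum_{m=1}^\infty m\left(\begin{array}{c}m+\frac{d}{2}-1\\ m\end{array}\right)u^m=\frac{d}{2}\cdot\frac{u}{(1-u)^{d/2+1}}
\]
with $u=\mu^2/[(\e+\lambda)(\e+\rho)]$ and subtracting the $m=1$ term yields the closed form
\[
\E[J_\e^2]=\frac{2}{(2\pi)^d}\int_{\D}\mu\left[\frac{1}{[(\e+\lambda)(\e+\rho)-\mu^2]^{d/2+1}}-\frac{1}{[(\e+\lambda)(\e+\rho)]^{d/2+1}}\right]\d r\d s\d r'\d s',
\]
with a parallel formula for $\E[J_{\e_1}J_{\e_2}]$. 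The mean value theorem applied to $x\mapsto x^{-d/2-1}$ then bounds this integrand uniformly in $\e$ by a constant multiple of $\mu^3/[\lambda\rho\,(\lambda\rho-\mu^2)^{d/2+1}]$, which is the candidate dominating function.

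The bulk of the work is to verify
\[
\int_{\D}\frac{\mu^3}{\lambda\rho\,(\lambda\rho-\mu^2)^{d/2+1}}\,\d r\d s\d r'\d s'<\infty
\]
under $H<\frac{3}{2(d+1)}$ and $d\ge 3$, for which I decompose $\D=\D_1\cup\D_2\cup\D_3$ as in Lemma~\ref{sec2-lm.1}. On $\D_1$ and $\D_2$, combining the lower bounds for $\lambda\rho-\mu^2$ (via a geometric-mean decomposition of the two additive terms in case (i)) with the Cauchy--Schwarz bound $\mu\le\sqrt{\lambda\rho}$ reduces integrability to the weaker threshold $H<2/(d+2)$, which is strictly larger than $3/(2(d+1))$ whenever $d\ge 3$. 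The sharp regime is $\D_3$: writing $u=s-r$, $v=r'-s$, $w=s'-r'$, one has $\lambda\rho-\mu^2\ge K(uw)^{2H}$ but the naive estimate $\mu\le(uw)^H$ is wasteful for large $v$. I therefore split the $v$-integration at $v_\star=\sqrt{uw}$: on $\{v\le v_\star\}$ the concavity of $x\mapsto x^{2H}$ gives $\mu\le C(uw)^H$; on $\{v\ge v_\star\}$ two applications of the mean value theorem to $x\mapsto x^{2H}$ give $\mu\le C\,uw\,v^{2H-2}$ (valid since $H<1/2$). Both regimes contribute a term of order $(uw)^{1/2-H(d+1)}$ after the $v$-integration, so the $\D_3$ integral reduces to $\int_0^t\int_0^t(uw)^{1/2-H(d+1)}\d u\d w$, which is finite precisely when $H(d+1)<3/2$.

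Finally, at each fixed $(r,s,r',s')\in\D$ the integrand of $\E[(J_{\e_1}-J_{\e_2})^2]$ tends to zero as $\e_1,\e_2\downarrow 0$ by continuity, and is uniformly dominated by the integrable function just bounded, so dominated convergence yields $\E[(J_{\e_1}-J_{\e_2})^2]\to 0$ and hence the desired $L^2$ limit. I expect the main obstacle to be the two-regime $\mu$-analysis in case (iii): matching the two scales at $v_\star=\sqrt{uw}$ is exactly what produces the critical exponent $1/2-H(d+1)>-1$ that matches the stated threshold $H<\frac{3}{2(d+1)}$.
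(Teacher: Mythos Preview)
Your overall strategy matches the paper's: both reduce to showing
\[
\int_{\D}\frac{\mu^3}{\lambda\rho\,(\lambda\rho-\mu^2)^{d/2+1}}\,\d r\d s\d r'\d s'<\infty
\]
and then split $\D=\D_1\cup\D_2\cup\D_3$. Your setup of the Cauchy argument with the mixed $\e_1,\e_2$ cross-terms is slightly more explicit than the paper's, which simply bounds $\E[J_\e^2]$ by the $\e=0$ integrand; both conclude via dominated convergence. On $\D_3$ your split at $v_\star=\sqrt{uw}$ with the two bounds $\mu\le(uw)^H$ and $\mu\le Cuw\,v^{2H-2}$ is a clean alternative to the paper's split at $b=a\vee c$; both recover the exponent $1/2-H(d+1)$ and hence the sharp threshold.

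There is, however, a genuine gap in your treatment of $\D_1$ and $\D_2$. The claim that these regions require only $H<2/(d+2)$ is wrong: after using $\mu\le\sqrt{\lambda\rho}$ and your geometric-mean lower bound, the resulting integrand on $\D_1$ is homogeneous of degree $-2H(d+1)$ in $(a,b,c)$, so the singularity at the origin of $[0,t]^3$ forces exactly the same constraint $2H(d+1)<3$ as on $\D_3$. The threshold $2/(d+2)$ governs only the boundary singularities where a single variable vanishes. Concretely, ``geometric mean plus Cauchy--Schwarz'' alone leaves you with $\int_{[0,t]^3}[(a+b)(b+c)]^{-Hd/2}(ac)^{-Hd/2-H}\,\d a\,\d b\,\d c$, and making this finite requires a Young-inequality split of the $(a+b),(b+c)$ factors with carefully tuned exponents, exactly as the paper does in \eqref{sec6-eq.1}. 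On $\D_2$ the naive bound is even worse (it gives a $b^{-H(d+1)}$ factor), and the paper handles it by a case split $b\lessgtr a\vee c$ together with the increment estimates \eqref{sec6-eq.2}--\eqref{sec6-eq.3} for $\mu$. So your $\D_3$ analysis is fine, but $\D_1,\D_2$ need the same level of care rather than a one-line dismissal.
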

	\begin{proof}
		By Lemma \ref{sec2-lm.3} and the preliminary result in \cite[(3.4), (3.5)]{yu2024limit}, we have
		\begin{align*}
			\E\left(J_\e\right)^2&=\E\left(\alpha_{t,\e}^{(1)}(0)\right)^2-\E\big(I_{1}(f_{1,\e})\big)^2\\
			&=\beta_d\int_\D\left(\frac{\mu}{[(\e+\lambda)(\e+\rho)-\mu^2]^{\frac d2+1}}-\frac{\mu}{[(\lambda+\e)(\rho+\e)]^{\frac d2+1}}\right)\d r\d s\d r'\d s',
		\end{align*}
		where the constant $\beta_d=2/(2\pi)^d$. Taking $\e=0$ to maximize the integral over $\D$, we obtain that
		\begin{align*}
			\E\left(J_\e\right)^2&\leq\beta_d\int_{\D}\left(\frac{\mu}{(\lambda\rho-\mu^2)^{\frac d2+1}}-\frac{\mu}{(\lambda\rho)^{\frac d2+1}}\right)\d r\d s\d r'\d s\\
			&\leq K\int_\D\frac{\mu^3}{(\lambda\rho-\mu^2)^{\frac d2+1}\lambda\rho}\d r\d s\d r'\d s'\\
			&=K\sum_{i=1}^3\int_{\D_i}\frac{\mu^3}{(\lambda\rho-\mu^2)^{\frac d2+1}\lambda\rho}\d r\d s\d r'\d s'=:V_1+V_2+V_3.
		\end{align*}
		If we have demonstrated the finiteness of $V_1$, $V_2$ and $V_3$, we would have completed the proof of this lemma. 
		\\\textbf{For the $V_1$ term}. By changing the coordinates $(r,r',s,s')$ by $(r,a=r'-r,b=s-r',c=s'-s)$ and integrating the $r$ variable, we have
		$$V_1\leq K\int_{[0,t]^3}\frac{\mu^3}{(\lambda\rho-\mu^2)^{\frac d2+1}\lambda\rho}\d a\d b\d c.$$
		By the fact that
		$$\mu\leq\sqrt{\lambda\rho}=(a+b)^H(c+d)^H$$
		and Lemma \ref{sec2-lm.1}  Case (i), we have
		\begin{align}\label{sec6-eq.1}
			V_1&\leq K\int_{[0,t]^3}\frac{\mu}{(\lambda\rho-\mu^2)^{\frac d2+1}}\d a\d b\d c\nonumber\\
			&\leq K \int_{[0,t]^3}\frac{(a+b)^H(b+c)^H}{[(a+b)^{2H}c^{2H}+a^{2H}(b+c)^{2H}]^{\frac d2+1}}\d a\d b\d c\nonumber\\
			&\leq K \int_{[0,t]^3}a^{-\frac{Hd}2-H}c^{-\frac{Hd}2-H}((a+b)(b+c))^{-\frac{Hd}2}\d a\d b\d c\nonumber\\
			&\leq K \int_{[0,t]^3}a^{-(1+\alpha)\frac{Hd}2-H}c^{-(1+\alpha)\frac{Hd}2-H}b^{-\beta Hd}\d a\d b\d c,
		\end{align}
		where we use the elementary inequality in the third inequality and the Young inequality $x+y\geq Kx^\alpha y^\beta$ in the last inequality with 
		$$\alpha=\frac{Hd+\delta-1}{Hd},\qquad\beta=\frac{1-\delta}{Hd},$$
		and $0<\delta<3-2Hd-2H$. Since
		$$(1+\alpha)\frac{Hd}2+H=Hd+H-\frac12+\frac{\delta}2<1,\qquad\beta Hd=1-\delta<1,$$
		we obtain that \eqref{sec6-eq.1} is finite.
		\\\textbf{For the $V_2$ term}. By changing the coordinates $(r,r',s,s')$ by $(r,a=r'-r,b=s'-r',c=s-s')$ and integrating the $r$ variable, we have
		$$V_2\leq K\int_{[0,t]^3}\frac{\mu^3}{(\lambda\rho-\mu^2)^{\frac d2+1}\lambda\rho}\d a\d b\d c.$$
		In addition, the fact that $\mu\leq\sqrt{\lambda\rho}$ implies that the finiteness of $V_2$ can be obtained by showing
		$$\int_{[0,t]^3}\frac{\mu}{(\lambda\rho-\mu^2)^{\frac d2+1}}\d a\d b\d c<\infty.$$
		To deal with this, we divide the domain of the integral into two subsets $\mathcal{C}_1$ and $\mathcal{C}_2$, such that $[0,t]^3=\mathcal{C}_1\cup\mathcal{C}_2$, where the sets are defined as 
		$$\mathcal{C}_1=\{(a,b,v)\in[0,t]^3|b\leq a\vee c\},$$
		$$\mathcal{C}_2=\{(a,b,v)\in[0,t]^3|b> a\vee c\}.$$
        If $(a,b,c)\in\mathcal{C}_2$, we will use the following estimation in the subsequent proof,
		\begin{align}\label{sec6-eq.2}
			\mu&=\frac12\left((a+b)^{2H}+(b+c)^{2H}-a^{2H}-c^{2H}\right)\nonumber\\
			&=Hb\int_0^1\left((a+bu)^{2H-1}+(c+bu)^{2H-1}\right)\d u\nonumber\\
			&\leq Kb(a^{2H-1}+c^{2H-1}).
		\end{align}
		where we use the condition $H<1/2$ in the last inequality. By Lemma \ref{sec2-lm.1} Case (ii) and the fact that $Hd+H<\frac32$, we have
		\begin{align*}
			\int_{\mathcal{C}_2}\frac{\mu}{(\lambda\rho-\mu^2)^{\frac d2+1}}\d a\d b\d c&\leq\int_{[0,t]^2}\int_0^{a\vee c}\frac{a^{2H-1}+c^{2H-1}}{b^{Hd+2H-1}(a\vee c)^{Hd+2H}}\d b\d a\d c\\
			&=K \int_{[0,t]^2} \frac{a^{2H-1}+c^{2H-1}}{(a\vee c)^{2Hd+4H-2}}\d a\d c\\
			&\leq K \int_0^t\int_0^a c^{2H-1}a^{-2Hd-4H+2}\d c\d a\\
			&= K\int_0^t a^{-2Hd-2H+2}\d a<\infty.  
		\end{align*}
		For the case where $(a,b,c)\in\mathcal{C}_1$, given the fact that
		\begin{align}\label{sec6-eq.3}
			\mu&=\frac12\left((a+b)^{2H}+(b+c)^{2H}-a^{2H}-c^{2H}\right)\nonumber\\
			&=Hb\int_0^1\left((a+bu)^{2H-1}+(c+bu)^{2H-1}\right)\d u\nonumber\\
			&\leq Kb^{2H},
		\end{align}
		  we have
		\begin{align*}
			\int_{\mathcal{C}_1}\frac{\mu}{(\lambda\rho-\mu^2)^{\frac d2+1}}\d a\d b\d c&\leq K\int_0^t\int_{[0,b]^2}b^{-Hd}(ac)^{-\frac{Hd}2-H}\d a\d c\d b\nonumber\\
			&= K\int_0^t b^{-2Hd-2H+2}\d b<\infty.		
		\end{align*}
		\textbf{For the $V_3$ term}. By changing the coordinates $(r,r',s,s')$ by $(r,a=s-r,b=r'-s,c=s'-r')$, then from Lemma \ref{sec2-lm.1} Case (iii), we have
		\begin{align*}
			V_3&\leq\int_{[0,t]^3}\frac{\mu^3}{(\lambda\rho-\mu^2)^{\frac 
            d2+1}\lambda\rho}\d a\d b\d c\nonumber\\
			&\leq K\int_{[0,t]^3}\mu^3(ac)^{-Hd-4H}\d a\d b\d c.
		\end{align*}
		Then, we split the domain as $[0,t]^3=\mathcal{C}_1\cup\mathcal{C}_2$, where the sets $\mathcal{C}_1$ and $\mathcal{C}_2$ are defined as
		$$\mathcal{C}_1=\{(a,b,c)\in[0,t]^3|b\leq a\vee c\},$$
		$$\mathcal{C}_2=\{(a,b,c)\in[0,t]^3|b> a\vee c\}.$$
		If $(a,b,c)\in\mathcal{C}_1$, we are supposed to show that
		$$\int_{[0,t]^2}\int_0^{a}\mu^3(ac)^{-Hd-4H}\d b\d a\d c+\int_{[0,t]^2}\int_0^{c}\mu^3(ac)^{-Hd-4H}\d b\d a\d c=:I_1+I_2<\infty.$$
		For $I_1$, by the Young inequality, we have
		\begin{align*}
			\mu&=G(a+b,a,c)=\frac12\left|(a+b+c)^{2H}+b^{2H}-(b+c)^{2H}-(a+b)^{2H}\right|\\
            &=H(1-2H)ac\int_0^1\int_0^1(b+au+cv)^{2H-2}\d u\d v\\
			&\leq K a^{2\alpha(H-1)+1}c^{2\beta(H-1)+1},
		\end{align*}
		where 
		$$\alpha=\frac{5-Hd-4H-\delta}{6(1-H)},\qquad\beta=\frac{Hd-2H+1+\delta}{6(1-H)},$$
		with $0<\delta<3-2Hd-2H$. Therefore, we have
		$$
            I_1\leq\int_{[0,t]^2}a^{-Hd-4H-6\alpha(1-H)+4}c^{-Hd-4H-6\beta(1-H)+3}\d a\d c<\infty,
        $$
		because 
		$$Hd+4H+6\alpha(1-H)-4=1-\delta<1,$$
		$$Hd+4H+6\beta(1-H)-3=2Hd+2H-2+\delta<1.$$
		With the same argument, we can prove that $I_2<\infty$. When $(a,b,c)\in\mathcal{C}_2$, by the fact that
		$$\mu=G(a+b,a,c)\leq H(1-2H)acb^{2H-2},$$
		we deduce that
		$$\int_{[0,t]^2}\int_{a\vee c}^t\mu^3(ac)^{-Hd-4H}\d b\d a\d c\leq \int_{[0,t]^2}\int_{a\vee c}^ta^{3-Hd-4H}c^{3-Hd-4H}b^{6H-6}\d a\d b\d c.$$
		Noting that $3-Hd-4H>0$ and that $b=(a\vee b\vee c)$, we find that
		$$
        \int_{[0,t]^2}\int_{a\vee c}^t\mu^3(ac)^{-Hd-4H}\d b\d a\d c\leq\int_{[0,t]^3}(a\vee b\vee c)^{-2Hd-2H}\d a\d b\d c<\infty.
       $$
		The finiteness of the above integral follows from the fact that $2Hd+2H<3$. The proof is complete.
	\end{proof}
	\begin{remark}\label{sec6-rm.1}
		It's worth pointing out that the proof of Lemma \ref{sec6-pro.1} reveals that the integrals in $\D_i$
		$$\int_{\D_i}\frac{\mu}{[(\e+\lambda)(\e+\rho)]^{\frac d2+1}}\d r\d s\d r'\d s'$$
		for $i=1,2$ is finite if we take $\e=0$. Therefore, our problem has been reduced to proving that the integral in $\D_3$ converges as $\e$ tends to zero.
	\end{remark}
	\textbf{Proof of Theorem \ref{sec1-thm.4}} From Remark \ref{sec6-rm.1}, it suffices to demonstrate that
	$$\int_{\D_3}\frac{\mu}{[(\e+\lambda)(\e+\rho)]^{\frac d2+1}}\d r\d s\d r'\d s'$$
	is convergent as $\e$ tends to zero. Denote 
	$$F_{\e}(a,b,c)=\frac{G(a+b,a,c)}{[(\e+a^{2H})(\e+c^{2H})]^{\frac d2+1}}.$$
	Then, By changing the coordinates $(r,r',s,s')$ by $(r,a=s-r,b=r'-s,c=s'-r')$ and Lemma \ref{sec2-lm.1} Case (iii), we simplify our problem by proving that the limit 
	\begin{align*}
		\lim_{\e\rightarrow0}&\int_{0\leq a+b+c\leq t}(t-(a+b+c))F_{\e}(a,b,c)\d a\d b\d c\\
		&=\lim_{\e\rightarrow0}\int_{0\leq a+b+c\leq t}tF_{\e}(a,b,c)\d a\d b\d c\\
		&\qquad-\lim_{\e\rightarrow0}\int_{0\leq a+b+c\leq t}(a+b+c)F_{\e}(a,b,c)\d a\d b\d c=:L_1+L_2
	\end{align*}
	exists.
	We first calculate the limit $L_1$. By the L'Hospital rule, we have
	\begin{align*}
		\lim_{\e\rightarrow0}&t\int_{0\leq a+b+c\leq t}\frac{G(a+b,a,c)}{[(\e+a^{2H})(\e+c^{2H})]^{\frac d2+1}}\d a\d b\d c\\
		&=\lim_{N\rightarrow\infty}t\int_{0\leq a+b+c\leq t}F_{N^{-1}}(a,b,c)\d a\d b\d c\\
		&=\lim_{N\rightarrow\infty}tN^{d+1-\frac{3}{2H}}\int_{0\leq x+y+z\leq tN^{\frac1{2H}}}F_{1}(x,y,z)\d x\d y\d z\\
		&=\lim_{N\rightarrow\infty}\frac{t}{N^{\frac{3}{2H}-d-1}}\int_0^{tN^{\frac1{2H}}}\int_{0\leq x+z\leq u}F_{1}(x,u-x-z,z)\d x\d z\d u\\
		&=\lim_{N\rightarrow\infty}\frac{t^2(3-2Hd-2H)^{-1}}{N^{\frac 1H-d-1}}\int_{0\leq x+z\leq tN^{\frac1{2H}}}F_1(x,tN^{\frac1{2H}}-x-z,z)\d x\d z\\
		&=\lim_{N\rightarrow\infty}\frac{t^4N^{d+1}}{3-2Hd-2H}\int_{0\leq\alpha+\beta\leq1}F_1(tN^{\frac1{2H}}\alpha,tN^{\frac1{2H}}(1-\alpha-\beta),tN^{\frac1{2H}}\beta)\d \alpha\d\beta,
	\end{align*}
	where we make the transformation by replacing $(a,b,c)$ with $(N^{-\frac1{2H}}x,N^{-\frac1{2H}}y,N^{-\frac1{2H}}z)$ in the second equality, change the coordinates $(x,u=x+y+z,z)$ in the third equality, and switch to $(\alpha=tN^{\frac1{2H}}x,\beta=tN^{\frac1{2H}}z)$ in the last equality. Note that
	\begin{align*}
		t^4N^{d+1}&F_1(tN^{\frac1{2H}}\alpha,tN^{\frac1{2H}}(1-\alpha-\beta),tN^{\frac1{2H}}\beta)\\&=\frac{t^{2H+4}N^{d+2}G(1-\beta,\alpha,\beta)}{[(1+t^{2H}N\alpha^{2H})(1+t^{2H}N\beta^{2H})]^{\frac d2+1}}=\frac{t^{4-2Hd-2H}G(1-\beta,\alpha,\beta)}{[(t^{-2H}N^{-1}+\alpha^{2H})(t^{-2H}N^{-1}+\beta^{2H})]^{\frac d2+1}}.
	\end{align*}
	By Lemma \ref{sec7.lm.7-4}, we have that
	$$\frac{G(1-\beta,\alpha,\beta)}{(\alpha\beta)^{Hd+2H}}$$
	is integrable in $\{(\alpha+\beta):0\leq\alpha+\beta\leq1\}$ when $H<\frac{3}{2(1+d)}.$ Therefore, using the dominated convergence theorem yields that
	$$L_1=\frac{t^{4-2Hd-2H}}{3-2Hd-2H}\int_{0\leq\alpha+\beta\leq1}\frac{G(1-\beta,\alpha,\beta)}{(\alpha\beta)^{Hd+2H}}\d \alpha\d \beta<\infty.$$
	In order to verify the existence of $L_2$, by the monotone convergence theorem, we are supposed to show that
	\begin{align*}
		\int_{0\leq a+b+c\leq t}&\frac{G(a+b,a,c)}{a^{Hd+2H-1}c^{Hd+2H}}\d a\d b\d c+\int_{0\leq a+b+c\leq t}\frac{bG(a+b,a,c)}{a^{Hd+2H}c^{Hd+2H}}\d a\d b\d c\\
		&+\int_{0\leq a+b+c\leq t}\frac{G(a+b,a,c)}{a^{Hd+2H}c^{Hd+2H-1}}\d a\d b\d c=:I_1+I_2+I_3\leq\infty.
	\end{align*}
	For $I_1$, we deduce from the relation
	\begin{align*}
		G(x+y,x,z)&=H(1-2H)xz\int_0^1\int_0^1(y+xu+zv)^{2H-2}\d u\d v\\
		&\leq H(1-2H)x^{2\alpha(H-1)+1}z^{2\beta(H-1)+1},
	\end{align*} 
	where 
	$$\alpha=\frac{3-2H-Hd-\delta}{2-2H},\qquad\beta=\frac{Hd-1+\delta}{2-2H},$$
	with $0<\delta<3-2Hd-2H$ that 
	$$I_1\leq\int_{[0,t]^3}a^{1-\delta}c^{2Hd+2H-2+\delta}\d a\d b\d c<\infty.$$
	The finiteness of $I_3$ follows by a similar argument to that for $I_1$. Finally, since
	$$G(x+y,x,z)\leq xzy^{2H-2},$$
	we obtain that
	$$I_2\leq\int_{[0,t]^3}\frac{b^{2H-1}}{a^{Hd+2H-1}c^{Hd+2H-1}}\d a\d b\d c<\infty.$$
	The proof is complete.
	\section{Proof of Theorem \ref{sec1-thm.1}}\label{sec3}
	Before completing the proof of Theorem \ref{sec1-thm.1}, we give some useful lemmas below.
	\begin{lemma}\label{sec3-lm.1}
		Suppose that $H>\frac12$ and $d\geq2$. Then, 
		\begin{equation}\label{sec3-eq.0}
			\lim\limits_{\e\rightarrow0}\E\left(\e^{\frac d2+1-\frac1H}\alpha^{(1)}_{t,\e}(0)\right)^2=\sigma^2,
		\end{equation}
		where $\sigma^2=\frac{t^{2H}}{2^{d+2}\pi^d}B\left(\frac1H,\frac d2+1-\frac1H\right)^2$. 
	\end{lemma}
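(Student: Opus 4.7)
The plan is to start from the closed-form representation of the second moment obtained by summing the chaotic decomposition of Lemma \ref{sec2-lm.3}:
$$\E\bigl(\alpha^{(1)}_{t,\e}(0)\bigr)^2=\frac{2}{(2\pi)^d}\int_{\D}\frac{\mu}{\bigl((\e+\lambda)(\e+\rho)-\mu^2\bigr)^{d/2+1}}\,dr\,ds\,dr'\,ds'.$$
I would split $\D=\D_1\cup\D_2\cup\D_3$, show that $\D_1$ and $\D_2$ contribute negligibly, and then compute the precise limit on $\D_3$ by a rescaling argument.

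For $\D_1$ and $\D_2$, the denominator lower bounds in Lemma \ref{sec2-lm.1} Cases (i) and (ii) force all three gap variables $(a,b,c)$ to scale as $\e^{1/(2H)}$ in order for the integrand to be of maximal size. Substituting $(a,b,c)=\e^{1/(2H)}(\alpha,\beta,\gamma)$ produces a Jacobian $\e^{3/(2H)}$ and a denominator of order $\e^{d+2}$, so the resulting contribution is $O(\e^{3/(2H)-d-1})$; multiplying by $\e^{d+2-2/H}$ gives $\e^{1-1/(2H)}\to 0$ since $H>1/2$.

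For $\D_3$, I parametrize $a=s-r$, $b=r'-s$, $c=s'-r'$ and rescale only the short variables $a=\e^{1/(2H)}x$, $c=\e^{1/(2H)}y$, keeping $b$ and $r$ macroscopic. The integration-by-parts identity
$$\mu=H(2H-1)\int_0^c\int_0^a(u+b+v)^{2H-2}\,du\,dv$$
gives the pointwise asymptotic $\mu\sim H(2H-1)\e^{1/H}xy\,b^{2H-2}$ when $b>0$ is fixed. Since $H<1$ implies $2/H>2$, the term $\mu^2=O(\e^{2/H})$ is negligible compared to $(\e+\lambda)(\e+\rho)=\e^{2}(1+x^{2H})(1+y^{2H})$, and the $r$-integration produces the factor $(t-a-b-c)\to t-b$. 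After multiplication by $\e^{d+2-2/H}$ the $\e$-powers cancel and the integrand converges pointwise to
$$\frac{H(2H-1)(t-b)\,xy\,b^{2H-2}}{\bigl((1+x^{2H})(1+y^{2H})\bigr)^{d/2+1}}.$$

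Applying dominated convergence (with a majorant built from the uniform bound $\mu\leq H(2H-1)ac\,b^{2H-2}$ and Lemma \ref{sec2-lm.1} Case (iii), which yields $(\e+\lambda)(\e+\rho)-\mu^2\geq K_3(\e+\lambda)(\e+\rho)$) produces a product of three one-dimensional integrals. Using $\int_0^t(t-b)b^{2H-2}\,db=t^{2H}/[2H(2H-1)]$ and the substitution $u=x^{2H}$ to obtain $\int_0^\infty x(1+x^{2H})^{-d/2-1}\,dx=\frac{1}{2H}B(1/H,d/2+1-1/H)$, the constants combine with the prefactor $2/(2\pi)^d$ to produce the announced value of $\sigma^2$. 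The main obstacle will be justifying dominated convergence uniformly on $\D_3$, particularly near $b=0$, where the factor $b^{2H-2}$ is on the edge of integrability and the naive denominator estimate may degrade; I would handle this by splitting $\D_3$ at a threshold $b>\delta$, applying the dominated convergence theorem on $\{b>\delta\}$, bounding the $\{b\leq\delta\}$ piece by the same $\e^{1-1/(2H)}$-rescaling argument used for $\D_1$ and $\D_2$, and sending $\delta\downarrow 0$ after $\e\downarrow 0$.
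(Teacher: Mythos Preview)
Your proposal is correct and follows essentially the same route as the paper: split $\D=\D_1\cup\D_2\cup\D_3$, kill $\D_1,\D_2$ by the full rescaling $(a,b,c)=\e^{1/(2H)}(x,y,z)$ which produces the extra factor $\e^{1-1/(2H)}\to 0$, and on $\D_3$ rescale only $a,c$ while keeping $b$ macroscopic to extract the limit via dominated convergence. The only difference is that your proposed $\delta$-splitting near $b=0$ is unnecessary: since $H>\tfrac12$ the exponent $2H-2>-1$ is integrable at the origin, and combining $\mu\leq H(2H-1)ac\,b^{2H-2}$ with $(\e+\lambda)(\e+\rho)-\mu^2\geq K_3(\e+\lambda)(\e+\rho)$ gives the global majorant $K\,xz\,b^{2H-2}\big[(1+x^{2H})(1+z^{2H})\big]^{-d/2-1}$, which is integrable on $\mathbb{R}_+^2\times[0,t]$ because $Hd+2H>2$---this is exactly how the paper handles it.
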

	\begin{proof}
		Recall the preliminary result in \cite[(3.5)]{yu2024limit}
		$$\E\left(\alpha^{(1)}_{t,\e}(0)\right)^2=V_1(\e)+V_2(\e)+V_3(\e),$$
		where
		$$V_i(\e)=\frac{2}{(2\pi)^d}\int_{\D_i}\det(\e I+\Sigma)^{-\frac d2-1}|\mu|\d r\d s\d r'\d s'.$$
		Then, we split the proof into three parts to consider $V_1(\e)$, $V_2(\e)$ and $V_3(\e)$, respectively.
		\\
		\textbf{For the $V_1(\e)$ term}. By changing the coordinates $(r,r',s,s')$ by $(r,a=r'-r,b=s-r',c=s'-s)$ and integrating the $r$ variable, we have
		$$V_1(\e)\leq K\int_{[0,t]^3}\det(\e I+\Sigma)^{-\frac d2-1}|\mu|\d a\d b\d c.$$
		Note that
		$$\mu\leq \sqrt{\lambda\rho}=(a+b)^H(b+c)^H$$
		and that 
		$$\det(\e I+\Sigma)\geq K\left(\e^2+\e((a+b)^{2H}+(b+c)^{2H})+a^{2H}(c+b)^{2H}+c^{2H}(a+b)^{2H}\right).$$
		Thus, we have
		\begin{align*}
			&\e^{d+2-\frac2H}V_1(\e)\\
            &\leq K\int_{[0,t]^3}\frac{\e^{d+2-\frac2H}(a+b)^{H}(b+c)^{H}}{\left[\e^2+\e((a+b)^{2H}+(b+c)^{2H})+a^{2H}(c+b)^{2H}+c^{2H}(a+b)^{2H}\right]^{\frac d2+1}}\d a\d b\d c\\
			&=K\int_{[0,t\e^{-\frac1{2H}}]^3}\frac{\e^{1-\frac1{2H}}(x+y)^H(y+z)^H}{\left[1+((x+y)^{2H}+(y+z)^{2H})+x^{2H}(y+z)^{2H}+z^{2H}(x+y)^{2H}\right]^{\frac d2+1}}\d x\d y\d z\\
			&\leq K\int_{\mathbb{R}_+^3}\frac{\e^{1-\frac1{2H}}(x+y)^H(y+z)^H}{\left[1+(x+y)^H(y+z)^H(1+(xz)^H)\right]^{\frac d2+1}}\d x\d y\d z,
		\end{align*}
		where we make the change of variables $(a=\e^{\frac1{2H}}x,b=\e^{\frac1{2H}}y,c=\e^{\frac1{2H}}z)$ in the first equality and use the elementary inequality in the last inequality. Since $1-\frac1{2H}>0$ and 
		\begin{equation}\label{sec3-eq.3}
			\int_{\mathbb{R}^3}\frac{(x+y)^H(y+z)^H}{\left[1+(x+y)^H(y+z)^H(1+(xz)^H)\right]^{\frac d2+1}}\d x\d y\d z<\infty
		\end{equation}
		by the fact that $Hd>1$ and $H>\frac12$, we conclude that
		\begin{equation}\label{sec3-eq.2}
			\lim\limits_{\e\rightarrow0}\e^{d+2-\frac2H}V_1(\e)=0.
		\end{equation}
		\textbf{For the $V_2(\e)$ term}. By changing the coordinates $(r,r',s,s')$ by $(r,a=r'-r,b=s'-r',c=s-s')$ and integrating the $r$ variable, we have
		$$V_1(\e)\leq K\int_{[0,t]^3}\det(\e I+\Sigma)^{-\frac d2-1}|\mu|\d a\d b\d c.$$
		By Lemma \ref{sec2-lm.1} Case (ii), we have
		\begin{align*}
			\mu&=\frac12\left((a+b)^{2H}+(b+c)^{2H}-a^{2H}-c^{2H}\right)\\
			&=Hb\int_0^1\left((a+bu)^{2H-1}+(c+bu)^{2H-1}\right)\d u\\
			&\leq Kb(a+b+c)^{2H-1}
		\end{align*}
		and 
		$$\det(\e I+\Sigma)\geq K\left(\e^2+\e((a+b+c)^{2H}+b^{2H})+b^{2H}(a^{2H}+c^{2H})\right).$$
		Then, applying the same technique as in the proof for the $V_1(\e)$ term leads to 
		\begin{align*}
			\e^{d+2-\frac2H}V_2(\e)&\leq K\int_{[0,t]^3}\frac{\e^{d+2-\frac2H}b(a+b+c)^{2H-1}}{\left[\e^2+\e((a+b+c)^{2H}+b^{2H})+b^{2H}(a^{2H}+c^{2H})\right]^{\frac d2+1}}\d a\d b\d c\\
			&=K\int_{[0,t\e^{-\frac1{2H}}]}\frac{\e^{1-\frac1{2H}}y(x+y+z)^{2H-1}}{\left[1+((x+y+z)^{2H}+y^{2H})+y^{2H}(x^{2H}+z^{2H})\right]^{\frac d2+1}}\d x\d y\d z\\
			&\leq K\int_{\mathbb{R}_+^3}\frac{\e^{1-\frac1{2H}}y(x+y+z)^{2H-1}}{\left[1+((x+y+z)^{2H}+y^{2H})+y^{2H}(x^{2H}+z^{2H})\right]^{\frac d2+1}}\d x\d y\d z.
		\end{align*}
		We split the domain of the integral above as $\mathbb{R}_+^3=\mathcal{C}_1\cup\mathcal{C}_2$, where the sets $\mathcal{C}_1$ and $\mathcal{C}_2$ are defined as 
		$$\mathcal{C}_1=\{(x,y,z)\in\mathbb{R}_+^3|y\leq x\vee z\},$$
		$$\mathcal{C}_2=\{(x,y,z)\in\mathbb{R}_+^3|y> x\vee z\}.$$
		When $(x,y,z)\in \mathcal{C}_1$, it holds that $x+y+z\leq 3(x\vee z)$, which, in addition to
		$$1+((x+y+z)^{2H}+y^{2H})+y^{2H}(x^{2H}+z^{2H})\geq 1+(x\vee z)^{2H}(1+y^{2H})$$
		and the fact that $Hd>1$, $H>\frac12$, implies that
		\begin{align*}
			&\int_{\mathcal{C}_1}\frac{y(x+y+z)^{2H-1}}{\left[1+((x+y+z)^{2H}+y^{2H})+y^{2H}(x^{2H}+z^{2H})\right]^{\frac d2+1}}\d x\d y\d z\\
			&\leq K\int_{\mathcal{C}_1}\frac{y(x\vee z)^{2H-1}}{\left[1+(x\vee z)^{2H}(1+y^{2H})\right]^{\frac d2+1}}\d x\d y\d z<\infty.
		\end{align*}
		If $(x,y,z)\in\mathcal{C}_2$, a similar proof can be given. Specifically, the fact that $x+y+z\leq 3y$ together with the inequality
		$$1+((x+y+z)^{2H}+y^{2H})+y^{2H}(x^{2H}+z^{2H})\geq1+y^{2H}(1+(x\vee z)^{2H})$$
		yields the desired result due to the condition $Hd>1, H>\frac12$,
		\begin{align*}
			&\int_{\mathcal{C}_2}\frac{y(x+y+z)^{2H-1}}{\left[1+((x+y+z)^{2H}+y^{2H})+y^{2H}(x^{2H}+z^{2H})\right]^{\frac d2+1}}\d x\d y\d z\\
			&\leq K\int_{\mathcal{C}_2}\frac{y^{2H}}{\left[1+y^{2H}(1+(x\vee z)^{2H})\right]^{\frac d2+1}}\d x\d y\d z<\infty.
		\end{align*}
		Consequently, we obtain
		\begin{equation}\label{sec3-eq.4}
			\lim\limits_{\e\rightarrow0}\e^{d+2-\frac2H}V_2(\e)=0.
		\end{equation}
		\textbf{For the $V_3(\e)$ term}. By changing the coordinates $(r,r',s,s')$ by $(r,a=s-r,b=r'-s,c=s'-r')$, then from Lemma \ref{sec2-lm.1} Case (iii), we have
		\begin{align*}
			\mu&=G(a+b,a,c)=\frac12\left|(a+b+c)^{2H}+b^{2H}-(b+c)^{2H}-(a+b)^{2H}\right|\\
			&=H(2H-1)ac\int_0^1\int_0^1(b+au+cv)^{2H-2}\d u\d v,
		\end{align*}
		which, together with
		$$\det(\e I+\Sigma)=\e^2+\e(a^{2H}+c^{2H})+(ac)^{2H}-G^2(a+b,a,c),$$
		leads to
		\begin{align*}
			V_3(\e)&=\frac{2}{(2\pi)^d}\int_{[0,t]^3}\mathds{1}_{[0,t]}(a+b+c)(t-a-b-c)\det(\e I+\Sigma)^{-\frac d2-1}|\mu|\d a\d b\d c\\
			&=\frac{2}{(2\pi)^d}\int_0^t\int_{[0,t\e^{-\frac1{2H}}]^2}\mathds{1}_{[0,t]}(y+\e^{\frac1{2H}}(x+z))(t-y-\e^{\frac1{2H}}(x+z))\\
			&\qquad\times\e^{\frac1H-d-2}\times\frac{G(\e^{\frac1{2H}}x+y,\e^{\frac1{2H}}x,\e^{\frac1{2H}}z)}{\left[(1+x^{2H})(1+z^{2H})-\e^{-2}G^2(\e^{\frac1{2H}}x+y,\e^{\frac1{2H}}x,\e^{\frac1{2H}}z)\right]^{\frac d2+1}}\d x\d z\d y,
		\end{align*}
		where we change the coordinates $(a,b,c)$ by $(\e^{\frac1{2H}}x,y,\e^{\frac1{2H}}z)$ in the last equality. Denote
		\begin{align*}
			\Phi_\e(x,y,z)&=\mathds{1}_{[0,\e^{-\frac1{2H}}]}(x)\mathds{1}_{[0,\e^{-\frac1{2H}}]}(z)\mathds{1}_{[0,t]}(y+\e^{\frac1{2H}}(x+z))\\
			&\qquad\times\e^{-\frac1H}\times\frac{(t-y-\e^{\frac1{2H}}(x+z))G(\e^{\frac1{2H}}x+y,\e^{\frac1{2H}}x,\e^{\frac1{2H}}z)}{\left[(1+x^{2H})(1+z^{2H})-\e^{-2}G^2(\e^{\frac1{2H}}x+y,\e^{\frac1{2H}}x,\e^{\frac1{2H}}z)\right]^{\frac d2+1}}.
		\end{align*}
		As a result, 
		$$\e^{d+2-\frac2H}V_3(\e)=\frac{2}{(2\pi)^d}\int_0^t\int_{\mathbb{R}_+^2}\Phi_\e(x,y,z)\d x\d z\d y.$$
		Observing that
		\begin{equation}\label{sec3-eq.5}
			G(\e^{\frac1{2H}}x+y,\e^{\frac1{2H}}x,\e^{\frac1{2H}}z)=H(2H-1)\e^{\frac1H}xz\int_0^1\int_0^1(y+\e^{\frac1{2H}}xu+\e^{\frac1{2H}}zv)^{2H-2}\d u\d v
		\end{equation}
		and the fact that $\frac2H-2>0$, we have
		$$
			\lim\limits_{\e\rightarrow0}\Phi_\e(x,y,z)=\frac{H(2H-1)(t-y)y^{2H-2}xz}{\left[(1+x^{2H})(1+z^{2H})\right]^{\frac d2+1}}.
	$$
		By \eqref{sec3-eq.5}, we deduce that there exists a constant $K>0$ only depending on $T$ and $H$ such that
		$$\Phi_\e(x,y,z)\leq K\frac{xzy^{2H-2}}{\left[(1+x^{2H})(1+z^{2H})\right]^{\frac d2+1}}.$$
		The right-hand side of the above inequality is integrable in $[0,t]\times\mathbb{R}_+^2$ due to the condition $Hd>1,H>\frac12$. Consequently, we can obtain the following identity by employing the dominated convergence theorem
		\begin{align}\label{se3-eq.7}
			\lim\limits_{\e\rightarrow0}\e^{d+2-\frac2H}V_3(\e)&=\frac{2}{(2\pi)^d}\int_0^t\int_{\mathbb{R}_+^2}\lim\limits_{\e\rightarrow0}\Phi_\e(x,y,z)\d x\d z\d y\nonumber\\
			&=\frac{2}{(2\pi)^d}\int_0^t\int_{\mathbb{R}_+^2}\frac{H(2H-1)(t-y)y^{2H-2}xz}{\left[(1+x^{2H})(1+z^{2H})\right]^{\frac d2+1}}\d x\d z\d y\nonumber\\
			&=\frac{2H(2H-1)}{(2\pi)^d}\left(\int_0^t(t-y)y^{2H-2}\d y\right)\left(\int_0^{\infty}\frac{x}{(1+x^{2H})^{\frac d2+1}}\right)^2\nonumber\\
			&=\frac{t^{2H}}{2^{d+2}\pi^d}B\left(\frac1H,\frac d2+1-\frac1H\right)^2.
		\end{align}
		Combining \eqref{sec3-eq.2}, \eqref{sec3-eq.4} and \eqref{se3-eq.7}, we complete the proof of \eqref{sec3-eq.0}.
	\end{proof}
	\begin{lemma}\label{sec2-lm.2}
		Suppose that $H>\frac12$ and $d\geq2$. Then, 
		$$
			\lim\limits_{\e\rightarrow0}\E\left(\e^{\frac d2+1-\frac1 H}I_1(f_{1,\e})\right)^2=\sigma^2,
		$$
		where $\sigma^2=\frac{t^{2H}}{2^{d+2}\pi^d}B\left(\frac1H,\frac d2+1-\frac1H\right)^2$.
	\end{lemma}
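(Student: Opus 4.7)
The starting point is Lemma \ref{sec2-lm.3} specialized at $m=1$, which yields
$$\E\big(I_1(f_{1,\e})\big)^2 = \frac{2}{(2\pi)^d}\int_{\D}\frac{\mu}{[(\e+\lambda)(\e+\rho)]^{\frac d2+1}}\,\d r\d s\d r'\d s'.$$
My plan is to mirror the three-part decomposition $\D=\D_1\cup\D_2\cup\D_3$ used in the proof of Lemma \ref{sec3-lm.1}, denoting the resulting pieces by $W_1(\e),W_2(\e),W_3(\e)$. The only structural difference between $W_i(\e)$ and $V_i(\e)$ is that the denominator here is $[(\e+\lambda)(\e+\rho)]^{\frac d2+1}$ rather than $\det(\e I+\Sigma)^{\frac d2+1}=[(\e+\lambda)(\e+\rho)-\mu^2]^{\frac d2+1}$. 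Since $\mu^2\geq 0$, the new denominator is pointwise at least as large, hence $W_i(\e)\leq V_i(\e)$ for every $i$.

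For the off-diagonal pieces $i=1,2$, this domination combined with the bounds $\e^{d+2-\frac 2H}V_i(\e)\to 0$ established inside the proof of Lemma \ref{sec3-lm.1} immediately yields $\e^{d+2-\frac 2H}W_i(\e)\to 0$. Thus the entire contribution to $\sigma^2$ must come from $W_3(\e)$.

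For $W_3(\e)$, I would rerun the argument used on $V_3(\e)$: first change coordinates via $(r,r',s,s')\mapsto(r,\,a=s-r,\,b=r'-s,\,c=s'-r')$, then rescale $(a,b,c)=(\e^{\frac{1}{2H}}x,\,y,\,\e^{\frac{1}{2H}}z)$. This produces
$$\e^{d+2-\frac 2H}W_3(\e) = \frac{2}{(2\pi)^d}\int_0^t\!\int_{\mathbb{R}_+^2}\tilde\Phi_\e(x,y,z)\,\d x\d z\d y,$$
where $\tilde\Phi_\e$ is identical to the integrand $\Phi_\e$ appearing in the $V_3$ analysis except that the $\e^{-2}G^2$ correction is absent from the denominator. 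Using the representation \eqref{sec3-eq.5}, one checks $\e^{-2}G^2=O(\e^{\frac 2H-2})$, which vanishes as $\e\to 0$ since $H<1$; hence the pointwise limit of $\tilde\Phi_\e$ coincides with that of $\Phi_\e$, namely
$$\frac{H(2H-1)(t-y)y^{2H-2}xz}{[(1+x^{2H})(1+z^{2H})]^{\frac d2+1}}.$$

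The main (and essentially only) subtlety is to secure a uniform integrable dominating function for $\tilde\Phi_\e$ without exploiting any cancellation between $(\e+\lambda)(\e+\rho)$ and $\mu^2$. However, since removing a non-negative subtraction from the denominator only shrinks the integrand, $\tilde\Phi_\e$ is controlled by the same dominating function already used for $\Phi_\e$ in Lemma \ref{sec3-lm.1}, which is integrable on $[0,t]\times\mathbb{R}_+^2$ under $H>\frac12$ and $Hd>1$. An application of dominated convergence then produces exactly the integral evaluated in \eqref{se3-eq.7}, giving $\lim_{\e\to 0}\e^{d+2-\frac 2H}W_3(\e)=\sigma^2$; combined with the vanishing of $W_1(\e)$ and $W_2(\e)$, this completes the proof.
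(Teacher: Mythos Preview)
Your proposal is correct and follows essentially the same route as the paper's own proof: the paper writes $\E(I_1(f_{1,\e}))^2=V_1^{(1)}(\e)+V_2^{(1)}(\e)+V_3^{(1)}(\e)$, invokes the domination $0\le V_i^{(1)}(\e)\le V_i(\e)$ together with \eqref{sec3-eq.2} and \eqref{sec3-eq.4} to kill $i=1,2$, and then repeats the change of variables $(a,b,c)=(\e^{1/(2H)}x,\,y,\,\e^{1/(2H)}z)$ and dominated convergence on the $\D_3$ piece exactly as you outline. The only cosmetic difference is that you justify the dominating bound for $\tilde\Phi_\e$ via the inequality $\tilde\Phi_\e\le\Phi_\e$, whereas the paper states the bound directly; both are immediate.
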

	\begin{proof}
		Recall the fact that
		$$\E\left(I_1(f_{1,\e})\right)^2=V_1^{(1)}(\e)+V_2^{(1)}(\e)+V_3^{(1)}(\e),$$
		where $V_i^{(1)}(\e)=2\int_{\D_i}\langle f_{1,\e}(\textbf{i}_1;r_1,r_2),f_{1,\e}(\textbf{i}_1,r_1,r_2)\rangle_{\mathfrak{H}^d}\d r_1\d r_2\d s_1\d s_2$ for $i=1,2,3$ and that
		$$0\leq V_i^{(1)}(\e)\leq V_i(\e),$$
		we have by \eqref{sec3-eq.2} and \eqref{sec3-eq.4}
		$$
			\lim\limits_{\e\rightarrow0}\e^{d+2-\frac2 H}\left(V_1^{(1)}(\e)+V_2^{(1)}(\e)\right)=0.
		$$
		Therefore, we only need to consider the term $\e^{d+2-\frac2 H}V_3^{(1)}(\e)$ as $\e\rightarrow0$. By changing the coordinates $(r,r',s,s')$ by $(r,a=s-r,b=r'-s,c=s'-r')$, we have 
		\begin{align*}
			V_3^{(1)}(\e)&=\frac{2}{(2\pi)^d}\int_{[0,t]^3}\frac{\mathds{1}_{[0,t]}(a+b+c)(t-a-b-c)G(a+b,a,c)}{\left[(\e+a^{2H})(\e+c^{2H})\right]^{\frac d2+1}}\d a\d b\d c\\
			&=\frac{2}{(2\pi)^d}\int_0^t\int_{[0,t\e^{\frac1{2H}}]^2} \mathds{1}_{[0,t]}(y+\e^{-\frac1{2H}}(x+z))(t-y-\e^{\frac1{2H}}(x+z))\\
			&\qquad \times\e^{\frac1H-d-2}\times\frac{G(\e^{\frac1{2H}}x+y,\e^{\frac1{2H}}x,\e^{\frac1{2H}}z)}{\left[(1+x^{2H})(1+z^{2H})\right]^{\frac d2+1}}\d x\d z\d y.
		\end{align*}
		Denote
		\begin{align*}
			\Phi_\e(x,y,z)&=\mathds{1}_{[0,\e^{-\frac1{2H}}]}(x)\mathds{1}_{[0,\e^{-\frac1{2H}}]}(z)\mathds{1}_{[0,t]}(y+\e^{\frac1{2H}}(x+z))\\
			&\qquad \times\e^{-\frac1H}\times\frac{(t-y-\e^{\frac1{2H}}(x+z))G(\e^{\frac1{2H}}x+y,\e^{\frac1{2H}}x,\e^{\frac1{2H}}z)}{\left[(1+x^{2H})(1+z^{2H})\right]^{\frac d2+1}}.
		\end{align*}
		As a result,
		$$\e^{d+2-\frac2H}V_3^{(1)}(\e)=\frac{2}{(2\pi)^d}\int_0^t\int_{\mathbb{R}_+^2}\Phi_\e(x,y,z)\d x\d z\d y.$$
		With the same method as in the proof of \eqref{se3-eq.7}, we can draw the conclusion that
		\begin{equation*}
			\lim\limits_{\e\rightarrow0}\e^{d+2-\frac2H}V_3^{(1)}(\e)=\sigma^2.
		\end{equation*}
		We complete the proof of Lemma \ref{sec2-lm.2}.
	\end{proof}
	\vspace{5pt}\\\textbf{Proof of Theorem \ref{sec1-thm.1}}. Recall the fact that
	$$\alpha^{(1)}_{t,\e}(0)=I_1(f_{1,\e})+\sum\limits_{m=2}^\infty I_{2m-1}(f_{2m-1,\e}).$$
	By Lemma \ref{sec2-lm.2}, the variance of $\e^{\frac d2+1-\frac1H}I_1(f_{1,\e})$ converges to $\sigma^2$. Besides, combining Lemma \ref{sec3-lm.1} and \ref{sec2-lm.2}, it holds that
	$$\e^{\frac d2+1-\frac1H}\sum\limits_{m=2}^\infty I_{2m-1}(f_{2m-1,\e})$$
	converges to zero in $L^2$. Theorem \ref{sec1-thm.1} then follows from the fact that $I_1(f_{1,\e})$ is Gaussian. The proof is complete.
	\section{Proof of Theorem \ref{sec1-thm.2}}\label{sec4}
	The core idea of the proof of Theorem \ref{sec1-thm.2} lies in verifying that the family of random variables $\e^{\frac d2+\frac12-\frac3{4H}}\alpha_{t,\e}^{(1)}$ satisfies the conditions of Lemma \ref{sec2-lm.4}. To this end, Lemma \ref{sec4-lm.4} is dedicated to establishing condition (i) and Lemma \ref{sec4-lm.5} is focused on confirming condition (ii). After introducing the lemmas mentioned above, we complete the proof.
	\begin{lemma}\label{sec4-lm.4}
		Suppose that $\frac{3}{2(1+d)}<H<\frac12$ and $d\geq3$. Then
		\begin{equation}\label{sec4-eq.9}
			\lim_{\e\rightarrow0}\E\Big(\e^{\frac d2+\frac12-\frac{3}{4H}}I_{2m-1}(f_{2m-1,\e})\Big)^2=t\int_{\mathbb{R}_+^3}\Psi_m(x,y,z)\d x\d y\d z,
		\end{equation}
		where
		$$\Psi_m(x,y,z)=\sum_{i=1}^3 \frac{4m}{(2\pi)^dd}\left(\begin{array}{c}m+\frac d2-1\\m\end{array}\right)\frac{\mu_i^{2m-1}}{[(1+\lambda_i)(1+\rho_i)]^{\frac d2+m}}$$
		and $\lambda_i$, $\rho_i$ and $\mu_i$ are the parameters $\lambda$, $\rho$ and $\mu$ specific to the domain $\D_i$ with respect to the coordinates $(x=\e^{-\frac1{2H}}a,y=\e^{-\frac1{2H}}b,z=\e^{-\frac1{2H}}c)$, respectively.
	\end{lemma}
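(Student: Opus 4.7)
The starting point is Lemma \ref{sec2-lm.3}, which gives
\begin{equation*}
\E[I_{2m-1}(f_{2m-1,\e})]^2=c_{m,d}\int_\D\frac{\mu^{2m-1}}{[(\e+\lambda)(\e+\rho)]^{d/2+m}}\d r\d s\d r'\d s',
\end{equation*}
where $c_{m,d}=\frac{4m}{(2\pi)^dd}\binom{m+d/2-1}{m}$. Multiplying by $\e^{d+1-3/(2H)}$ and decomposing $\D=\D_1\cup\D_2\cup\D_3$ via Lemma \ref{sec2-lm.1}, I would analyse each region $\D_i$ separately. On $\D_i$, introduce the local coordinates $(r,a,b,c)$ of Lemma \ref{sec2-lm.1}, integrate out $r$ to extract the factor $t-(a+b+c)$, and then rescale $(a,b,c)=\e^{1/(2H)}(x,y,z)$.

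The self-similarity of fBm of index $H$ gives $\lambda=\e\lambda_i$, $\rho=\e\rho_i$ and $\mu=\e\mu_i$, where $\lambda_i,\rho_i,\mu_i$ are the same expressions evaluated in the rescaled variables. A direct counting of powers of $\e$,
\begin{equation*}
\e^{d+1-3/(2H)}\cdot\e^{-d-2m}\cdot\e^{2m-1}\cdot\e^{3/(2H)}=1,
\end{equation*}
shows that the normalization cancels exactly, and on the expanding domain $\{x+y+z\leq t\e^{-1/(2H)}\}\cap\mathbb{R}_+^3$ the integrand reduces to
\begin{equation*}
c_{m,d}\bigl(t-\e^{1/(2H)}(x+y+z)\bigr)\frac{\mu_i^{2m-1}}{[(1+\lambda_i)(1+\rho_i)]^{d/2+m}}.
\end{equation*}
This converges pointwise, as $\e\to0$, to $t$ times the $i$-th summand of $\Psi_m$, and summing over $i=1,2,3$ identifies the conjectured limit.

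The main obstacle is the justification of dominated convergence. The pre-limit integrand is uniformly majorized by $t\cdot c_{m,d}\mu_i^{2m-1}[(1+\lambda_i)(1+\rho_i)]^{-d/2-m}$, so the task reduces to verifying integrability of $\Psi_m$ on $\mathbb{R}_+^3$, and this is exactly where the hypothesis $H>3/(2(1+d))$ enters. I would argue region by region, reusing the strategy from the proof of Lemma \ref{sec6-pro.1}: in case (iii), split the domain by whether $y\leq x\vee z$ or $y>x\vee z$ and bound $\mu_i$ by $K(xz)^H$ and $Kxzy^{2H-2}$, respectively, via the mean-value form of $G$ valid for $H<1/2$; in cases (i) and (ii), combine $\mu_i\leq\sqrt{\lambda_i\rho_i}$ with the explicit formulas of Lemma \ref{sec2-lm.1}. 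The exponents produced in the resulting one-dimensional integrals are strictly less than one precisely because $H(d+1)>3/2$, which closes the argument.
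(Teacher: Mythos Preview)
Your overall scheme---apply Lemma \ref{sec2-lm.3}, pass to the local coordinates $(a,b,c)$ on each $\D_i$, rescale by $\e^{1/(2H)}$, and invoke dominated convergence---is exactly the route the paper takes, and the power-counting and identification of the pointwise limit are correct.

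The gap lies in the integrability of the dominating function. Your proposed bound in case (iii), namely $\mu_3\le K(xz)^H$ on $\{y\le x\vee z\}$, does \emph{not} yield an integrable majorant over $\mathbb R_+^3$: after integrating out $y$ one picks up a factor $x\vee z$, and the remaining double integral behaves at infinity like $\int\!\int (x\vee z)(xz)^{-H(d+1)}\,\d x\,\d z$, which diverges whenever $H(d+1)\le 2$. For $d=3$ this covers the entire admissible range $H\in(3/8,1/2)$. A similar problem appears in case (ii) if one uses only $\mu_2\le\sqrt{\lambda_2\rho_2}$: the factor $(x+y+z)^{-H(d+1)}$ decays too slowly once the extra Jacobian variable is integrated. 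The strategy you borrow from Lemma \ref{sec6-pro.1} was tailored to integrability over the bounded cube $[0,t]^3$, where only local singularities matter; here the obstruction is tail decay on $\mathbb R_+^3$, and the same splitting no longer suffices.

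The paper avoids this by first reducing to a single power of $\mu$ via $\mu^{2m-1}\le(\lambda\rho)^{m-1}\mu$, and then invoking Lemma \ref{sec4-lm.1}. In case $i=3$, that lemma splits on $\{y<1\}$ versus $\{y\ge1\}$ and on the latter set uses the Young-inequality bound $\mu_3\le Kx^{\alpha(H-1)+1}z^{\alpha(H-1)+1}y^{2\beta(H-1)}$ with carefully tuned $\alpha+\beta=1$; the extra decay in $y$ is exactly what compensates for the insufficient decay in $(x,z)$. Your argument can be repaired by adopting this decomposition (or by citing Lemma \ref{sec4-lm.1} directly after the $\mu\le\sqrt{\lambda\rho}$ reduction).
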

	\begin{proof}
		By Lemma \ref{sec2-lm.3}, we have
		\begin{align*}
			&\E\Big(\e^{\frac d2+\frac12-\frac{3}{4H}}I_{2m-1}(f_{2m-1,\e})\Big)^2\\
			&=\frac{4m}{(2\pi)^dd}\left(\begin{array}{c}m+\frac d2-1\\m\end{array}\right)\int_\D\frac{\e^{d+1-\frac{3}{2H}}\mu^{2m-1}}{[(\e+\lambda)(\e+\rho)]^{\frac d2+m}}\d r \d s\d r'\d s'\\
			&=\sum_{i=1}^3 \frac{4m}{(2\pi)^dd}\left(\begin{array}{c}m+\frac d2-1\\m\end{array}\right)\int_{\D_i}\frac{\e^{d+1-\frac{3}{2H}}\mu_i^{2m-1}}{[(\e+\lambda_i)(\e+\rho_i)]^{\frac d2+m}}\d r \d s\d r'\d s'\\
			&=\sum_{i=1}^3 \frac{4m}{(2\pi)^dd}\left(\begin{array}{c}m+\frac d2-1\\m\end{array}\right)\int_{[0,t]^3}\frac{\e^{d+1-\frac{3}{2H}}\mathds{1}_{[0,t]}(a+b+c)(t-a-b-c)\mu_i^{2m-1}}{[(\e+\lambda_i)(\e+\rho_i)]^{\frac d2+m}}\d a \d b\d c,
		\end{align*}
		where we make the change of variables in the same manner as in the proof of Lemma \ref{sec3-lm.1} for the three respective cases in the last equality. Then, changing the coordinates $(a,b,c)$ by $(\e^{\frac1{2H}}x,\e^{\frac1{2H}}y,\e^{\frac1{2H}}z)$ yields that
		\begin{align*}
			\E\Big(\e^{\frac d2+\frac12-\frac{3}{4H}}I_{2m-1}(f_{2m-1,\e})\Big)^2=\int_{0\leq a+b+c\leq t\e^{-\frac1{2H}}}(t-\e^{\frac1{2H}}(x+y+z))\Psi_m(x,y,z)\d x\d y\d z.
		\end{align*}
		By the fact that $\mu<\sqrt{\lambda\rho}$, we have
		$$\Psi_m(x,y,z)\leq\sum_{i=1}^3 \frac{2m}{(2\pi)^dd}\left(\begin{array}{c}m+\frac d2-1\\m\end{array}\right)\frac{\mu_i}{[(1+\lambda_i)(1+\rho_i)]^{\frac d2+1}}.$$
		The right-hand side of the above inequality is integrable in $\mathbb{R}_+^3$ by Lemma \ref{sec4-lm.1}. As a consequence, \eqref{sec4-eq.9} holds due to the dominated convergence theorem.
	\end{proof}
	\begin{lemma}\label{sec4-lm.5}
		Suppose that $\frac{3}{2(1+d)}<H<\frac12$ and $d\geq3$. Then, for $m\geq2$ and $1\leq p\leq2m-2$, we have
		\begin{equation}\label{sec4-eq.10}
			\lim_{\e\rightarrow0}\e^{2d+2-\frac3H}\Vert f_{2m-1,\e}\otimes_p f_{2m-1,\e}\Vert^2_{(\mathfrak{H}^d)^{\otimes2(2m-1-p)}}=0.
		\end{equation}
	\end{lemma}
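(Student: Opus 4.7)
The plan is to expand $\Vert f_{2m-1,\e}\otimes_p f_{2m-1,\e}\Vert^2$ as an explicit eight-dimensional time integral and show by rescaling that it is of order $\e^{-(2d+2-3/H)}\cdot o(1)$. Unfolding the Hilbertian inner products via the integral representation in Lemma \ref{sec2-lm.5}, the integrand takes the form
\begin{equation*}
C_{m,d,p}\prod_{i=1}^{4}(\e+\lambda_i)^{-(d/2+m)}\cdot|\mu_{12}|^p|\mu_{34}|^p|\mu_{13}|^{2m-1-p}|\mu_{24}|^{2m-1-p},
\end{equation*}
integrated over four intervals $(r_i,s_i)$ with $0<r_i<s_i<t$, where $\lambda_i=(s_i-r_i)^{2H}$ and $\mu_{ij}$ is the one-dimensional covariance between the increments indexed by $(r_i,s_i)$ and $(r_j,s_j)$; the constant $C_{m,d,p}$ absorbs the combinatorial factors arising from the $\alpha(\mathbf{i}_{2m-1})$ multi-index bookkeeping.

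Next, I would apply the Cauchy--Schwarz bound $|\mu_{ij}|\le\sqrt{\lambda_i\lambda_j}$; since $1\le p\le 2m-2$ forces both $p\ge 1$ and $2m-1-p\ge 1$, a single power of each of the four $\mu_{ij}$ can be retained. After also using $\lambda_i^{m-1}\le(\e+\lambda_i)^{m-1}$ to collapse the exponent from $d/2+m$ down to $d/2+1$, the integrand is bounded by
\begin{equation*}
K\prod_{i=1}^{4}(\e+\lambda_i)^{-(d/2+1)}\cdot|\mu_{12}||\mu_{34}|\cdot\frac{|\mu_{13}||\mu_{24}|}{\sqrt{\lambda_1\lambda_2\lambda_3\lambda_4}}.
\end{equation*}
Discarding the last factor would reduce the integral to the product of two copies of the $L^2$-norm integral of $I_1(f_{1,\e})$, which by Lemma \ref{sec4-lm.4} scales exactly like $\e^{-(2d+2-3/H)}$; hence the desired $o(1)$ decay must come entirely from the retained coupling factor $|\mu_{13}||\mu_{24}|/\sqrt{\lambda_1\lambda_2\lambda_3\lambda_4}$, which is bounded by $1$.

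I would then split the eight-dimensional domain into subregions indexed by the relative orderings of the four intervals, apply the rescaling $(r_i,s_i)=\e^{1/(2H)}(\tilde r_i,\tilde s_i)$ in each subregion, and invoke dominated convergence following the template of Lemma \ref{sec4-lm.4}. The four factors $(\e+\lambda_i)^{-(d/2+1)}$ together with the eight-fold Jacobian contribute precisely the prefactor $\e^{-(2d+2-3/H)}$, so all the gain must be extracted from the rescaled coupling factor. Here the hypothesis $1\le p\le 2m-2$ is decisive: the contracted pairs $\{(r_1,s_1),(r_2,s_2)\}$ and $\{(r_3,s_3),(r_4,s_4)\}$ are typically separated by macroscopic (order $1$) distances while each interval length remains of order $\e^{1/(2H)}$, so Lemma \ref{sec2-lm.1} Case (iii) yields $|\mu_{13}|\le K(s_1-r_1)(s_3-r_3)(r_3-s_1)^{2H-2}$ and consequently $|\mu_{13}|/\sqrt{\lambda_1\lambda_3}=O(\e^{1/H-1})$, and analogously for $\mu_{24}$; since $H<1/2$ forces $1/H-1>0$, the rescaled coupling factor tends to zero pointwise as $\e\to 0$.

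The main obstacle is producing an $\e$-independent integrable majorant on each of the (roughly a dozen) ordering subregions so that the dominated convergence theorem can be applied uniformly. This amounts to a four-interval analogue of the three-case splitting of Lemma \ref{sec2-lm.1} and requires combining piecewise the covariance estimates $\mu\le Kb^{2H}$, $\mu\le Kb(a+c)^{2H-1}$, and $\mu\le Kxzy^{2H-2}$ already deployed in Lemma \ref{sec6-pro.1}, together with the integrability at infinity of $(1+\tilde\lambda_i)^{-(d/2+1)}$ that relies on $H>\frac{3}{2(1+d)}$ as in Lemma \ref{sec4-lm.4}. The most delicate subcases are those where one cross-pair (say $(r_1,s_1)$ and $(r_3,s_3)$) is close while the other is far apart, as the corresponding $\mu_{ij}$ must then be controlled by different cases of Lemma \ref{sec2-lm.1} simultaneously.
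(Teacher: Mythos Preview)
Your approach is quite different from the paper's. Rather than estimate the eight-dimensional integral directly, the paper rewrites the contraction norm as $\e^{2d+2-3/H}A^2\,\E\big(\prod_{(i,j)\in\mathcal S}\xi_\e^{i,j}\big)$ for auxiliary random variables $\xi_\e^{i,j}$ built from independent fBm's, and then invokes Proposition~1 of \cite{peccati2005gaussian}: since each $\e^{d/2+1/2-3/(4H)}\xi_\e^{i,j}$ has converging variance by Lemma~\ref{sec4-lm.4}, proving that each converges \emph{in law} to a Gaussian forces the contractions to vanish. That CLT is established by self-similarity plus an ergodic-type argument (Lemmas~\ref{sec4-lm.2} and~\ref{sec4-lm.3}). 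In short, the fourth moment theorem is run \emph{in reverse}. Your direct attack on the integral is closer in spirit to what the paper does for the critical-case analogue, Lemma~\ref{sec5-lm.2}.

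That said, your sketch has a genuine gap at the dominated-convergence step. After the full rescaling $(r_i,s_i)=\e^{1/(2H)}(\tilde r_i,\tilde s_i)$, the coupling ratio $|\tilde\mu_{13}||\tilde\mu_{24}|/\sqrt{\tilde\lambda_1\tilde\lambda_2\tilde\lambda_3\tilde\lambda_4}$ is scale-invariant (numerator and denominator both scale as $\e^2$), hence a fixed $\e$-independent function of the rescaled variables; it does \emph{not} tend to zero pointwise, so there is nothing for dominated convergence to act on. Relatedly, your power count is off by $\e^{1/H}$: the four factors $(\e+\lambda_i)^{-(d/2+1)}$, the two retained covariances $|\mu_{12}||\mu_{34}|$, and the eightfold Jacobian combine to give $\e^{-2d-2+4/H}$, not $\e^{-(2d+2-3/H)}$. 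The missing $\e^{-1/H}=N^{2}$ (with $N=t\e^{-1/(2H)}$) comes from the two translation degrees of freedom---overall shifts of the pairs $\{1,2\}$ and $\{3,4\}$---along which the rescaled integrand is not integrable on $\mathbb R_+^8$. What is actually required is a growth estimate showing the rescaled integral over the expanding domain is $o(N^2)$; this is where the decay of $\mu_{13},\mu_{24}$ at large separation must be used \emph{quantitatively} (integrated against the separation variable), not merely pointwise. Your heuristic about ``macroscopic separation'' is correct, but the mechanism you propose to make it rigorous does not work as stated; a workable route along your lines would look more like the piecewise $\Delta_i$-bounds in the paper's proof of Lemma~\ref{sec5-lm.2}.
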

	\begin{proof}
		Denote the multi-index $(k_1,\cdots,k_p)$ by $\textbf{k}_p$. By the definition of $f_{2m-1,\e}\otimes_p f_{2m-1,\e}$  and Lemma \ref{sec2-lm.5}, we have
		\begin{align*}
			(f_{2m-1,\e}&\otimes_p f_{2m-1,\e})(\textbf{i}_{2m-1-},\textbf{j}_{2m-p};u_1,\cdots,u_{2m-1-p},v_1,\cdots,v_{2m-1-p})\\
			&=\frac{(2\pi)^{-d}}{((2m-1)!)^2}\sum_{k_1,\cdots,k_p=1}^\infty\alpha(\textbf{i}_{2m-1-p},\textbf{k}_p)\alpha(\textbf{j}_{2m-1-p},\textbf{k}_p)\\
			&\qquad\times\int_\D\frac{\mu^p}{[(\e+\lambda)(\e+\rho)]^{\frac d2+m}}\prod_{j=1}^{2m-1-p}\textbf{1}_{[r,s]}(u_j)\textbf{1}_{[r',s']}(v_j)\d r\d s\d r'\d s.
		\end{align*}
		Let the binary set $\mathcal{S}$ be $\mathcal{S}=\{(i,j)|(1,3),(1,4),(2,3),(2,4)\}$. As a result, we can write
		\begin{align}\label{sec4-eq.13}
			\e^{2d+2-\frac3H}\Vert f_{2m-1,\e}\otimes_p f_{2m-1,\e}\Vert^2_{(\mathfrak{H}^d)^{\otimes2(2m-1-p)}}=\e^{2d+2-\frac3H}A^2\E\Bigg(\prod_{(i,j)\in\mathcal{S}}\xi_\e^{i,j}\Bigg),
		\end{align}
		where
		$$A^2=\frac{(2\pi)^{-2d}}{((2m-1)!)^4}\sum_{\textbf{i}_{2m-1-p},\;\textbf{j}_{2m-1-p}}\left(\sum_{\textbf{k}_{p}}\alpha(\textbf{i}_{2m-1-p},\textbf{k}_p)\alpha(\textbf{j}_{2m-1-p},\textbf{k}_p)\right)^2$$
		and the random variables $\xi_\e^{i,j}$ are defined as
		$$\xi_\e^{i,j}=\int_0^t\int_0^s(\e+|r-s|^{2H})^{-\frac d2-m}\prod_{l=1}^p\prod_{k=1}^{2m-1-p}(B_r^{H,i,l}-B_s^{H,i,l})(B_r^{H,j,k}-B_s^{H,j,k})\d r\d s,$$
		where $\{B_t^{H,i,k},t\geq0\}$ is a family of independent one-dimensional fBm with Hurst parameter $H$ for $i=1,2,3,4$ and $k=1,2,\cdots,2m-1$. By Lemma \ref{sec4-lm.4}, we obtain that the variance of the random variables $\e^{\frac d2+\frac12-\frac{3}{4H}}\xi_\e^{i,j}$ converge to a constant as $\e$ tends to zero. In order to prove \eqref{sec4-eq.10}, it suffices to demonstrate that each family of random variables $\e^{\frac d2+\frac12-\frac 3{4H}}\xi_\e^{i,j}$ converges in law to a normal random variable, by \cite[Proposition 1]{peccati2005gaussian}. Observe that the families of random variables $\{\e^{\frac d2+\frac12-\frac 3{4H}}\xi_\e^{i,j},\e>0\}$ have the same distribution as 
		$\{\eta_\e,\e>0\},$ where
		$$\eta_\e=\e^{\frac d2+\frac12-\frac 3{4H}}\int_0^t\int_0^s(\e+|r-s|^{2H})^{-\frac d2-m}\prod_{j=1}^{2m-1}(B_r^{H,j}-B_s^{H,j})\d r\d s.$$
		By the $H$-self-similarity of fBm, we have by changing the coordinates $(r=\e^{\frac1{2H}}x,s=\e^{\frac1{2H}}y)$
		\begin{align*}
			\eta_\e&=\e^{\frac{1}{4H}}\int_0^{t\e^{-\frac1{2H}}}\int_0^y(1+|x-y|^{2H})^{-\frac d2-m}\prod_{j=1}^{2m-1}(B_x^{H,j}-B_y^{H,j})\d x\d y\\
			&=\e^{\frac{1}{4H}}\int_{x<t\e^{-\frac1{2H}}\wedge y}(1+|x-y|^{2H})^{-\frac d2-m}\prod_{j=1}^{2m-1}(B_x^{H,j}-B_y^{H,j})\d x\d y\\
			&\qquad-\e^{\frac{1}{4H}}\int_{x<t\e^{-\frac1{2H}}<y}(1+|x-y|^{2H})^{-\frac d2-m}\prod_{j=1}^{2m-1}(B_x^{H,j}-B_y^{H,j})\d x\d y\\
			&=:\eta_{\e,1}-\eta_{\e,2}.
		\end{align*}
		The condition $Hd+H>\frac32$ yeilds that
		\begin{align*}
			\lim_{\e\rightarrow0}\Vert\eta_{\e,2}\Vert_{L^2}&\leq \lim_{\e\rightarrow0} \e^{\frac{1}{4H}}\int_{x<t\e^{-\frac1{2H}}<y}\frac{|x-y|^{(2m-1)H}}{(1+|x-y|^{2H})^{\frac d2+m}}\d x\d y\\
			&\leq \lim_{\e\rightarrow0} \e^{\frac{1}{4H}}\int_{x<t\e^{-\frac1{2H}}<y}(1+|x-y|^{2H})^{-\frac d2-\frac12}\d x\d y\\
			&= \lim_{\e\rightarrow0} \e^{\frac{1}{4H}}\int_0^{t\e^{-\frac1{2H}}}\int_v^{\infty}(1+u^{2H})^{-\frac d2-\frac12}\d u\d v\\
			&=K\lim_{\e\rightarrow0}\frac{\int_{t\e^{-\frac1{2H}}}^{\infty}(1+u^{2H})^{-\frac d2-\frac12}\d u}{\e^{\frac1{4H}}}\\
			&=K\lim_{\e\rightarrow0}\e^{\frac d2+\frac12-\frac{3}{4H}}=0,
		\end{align*}
		where we make the change of variables $(u=y-x,v=t\e^{-\frac1{2H}}-x)$ in the first equality and use the L'Hospital rule in the second and third equality. Consequently, we have
		$$\eta_{\e,1}=\Lambda_N=\int_0^\infty(1+u^{2H})^{-\frac d2-m}\left(\frac{1}{\sqrt{N}}\int_0^N\prod_{j=1}^{2m-1}(B_{s+u}^{H,j}-B_s^{H,j})\d s\right)\d u,$$
		where we make the change of variables $(u=x-y,s=y)$ and $N=\e^{-\frac{1}{2H}}$. Then, the Lemma \ref{sec4-lm.3} completes the proof.
	\end{proof}
	
	From Lemmas \ref{sec4-lm.4} and \ref{sec4-lm.5}, we have the following corollary.
    
	\begin{corollary}\label{sec4-co.1}
		Suppose that $\frac{3}{2(d+1)}<H<\frac12$ and $d\geq3$. Then, for the $(2m-1)$-th chaotic component $I_{2m-1}(f_{2m-1,\e})$ of $\alpha_{t,\e}^{(1)}(0)$, we have
		\begin{equation}\label{sec4-eq.15}
			\e^{\frac12+\frac d2-\frac{3}{4H}}I_{2m-1}(f_{2m-1,\e})\stackrel{law}\rightarrow\mathcal{N}(0,\hat{\sigma}_m^2),
		\end{equation} 
		where $\hat{\sigma}_m^2=t\int_{\mathbb{R}_+^3}\Psi_m(x,y,z)\d x\d y\d z$ and $\Psi_m(x,y,z)$ is defined in Lemma \ref{sec4-lm.4}.
	\end{corollary}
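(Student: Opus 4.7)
The plan is to apply the general central limit theorem of Lemma \ref{sec2-lm.4} directly to the single-chaos sequence $F_\e := \e^{\frac12+\frac d2-\frac{3}{4H}} I_{2m-1}(f_{2m-1,\e})$. Since $F_\e$ lives in the single Wiener chaos of order $2m-1$, I would identify the parameter $k$ in Lemma \ref{sec2-lm.4} with $1/\e$, set $g_{2m-1,\e} := \e^{\frac12+\frac d2-\frac{3}{4H}} f_{2m-1,\e}$, and take $g_{n,\e} \equiv 0$ for every $n \neq 2m-1$. With this choice, the four hypotheses of Lemma \ref{sec2-lm.4} unpack very cleanly.

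Condition (i) is nontrivial only at index $n=2m-1$, where $(2m-1)! \Vert g_{2m-1,\e}\Vert^2_{(\mathfrak{H}^d)^{\otimes(2m-1)}} = \E(F_\e)^2$, whose limit is $\hat{\sigma}_m^2$ by Lemma \ref{sec4-lm.4}; for $n\neq 2m-1$ the corresponding $\sigma_n^2$ is zero. Condition (ii) asks that, for each $p=1,\ldots,2m-2$,
\[
\Vert g_{2m-1,\e}\otimes_p g_{2m-1,\e}\Vert^2_{(\mathfrak{H}^d)^{\otimes 2(2m-1-p)}} = \e^{2d+2-\frac{3}{H}} \Vert f_{2m-1,\e}\otimes_p f_{2m-1,\e}\Vert^2_{(\mathfrak{H}^d)^{\otimes 2(2m-1-p)}}\longrightarrow 0,
\]
which is precisely the conclusion of Lemma \ref{sec4-lm.5}. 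Conditions (iii) and (iv) are automatic: the only nonzero $\sigma_n^2$ is $\sigma_{2m-1}^2 = \hat{\sigma}_m^2 < \infty$, and for any $N \geq 2m-1$ the tail $\sum_{n>N} n!\Vert g_{n,\e}\Vert^2$ vanishes identically.

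All four hypotheses being in force, Lemma \ref{sec2-lm.4} yields $F_\e \stackrel{\mathrm{law}}{\longrightarrow} \mathcal{N}(0,\hat{\sigma}_m^2)$, which is the claim. At this stage there is essentially no remaining obstacle: the analytic content is already absorbed into the two preceding lemmas, with the genuinely delicate step being the vanishing-contractions bound of Lemma \ref{sec4-lm.5}, whose proof reduces, through $H$-self-similarity and the identification of the auxiliary variables $\xi_\e^{i,j}$ with $\eta_\e$ (and then with $\Lambda_N$), to a fixed-chaos CLT handled by Lemma \ref{sec4-lm.3}. The corollary itself is therefore just a direct assembly of Lemmas \ref{sec4-lm.4}, \ref{sec4-lm.5}, and \ref{sec2-lm.4}.
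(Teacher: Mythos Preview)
Your proposal is correct and follows essentially the same approach as the paper: both arguments reduce the CLT to the variance convergence of Lemma \ref{sec4-lm.4} together with the vanishing of the contractions in Lemma \ref{sec4-lm.5}. The only cosmetic difference is that the paper invokes the fourth moment theorem of Nualart--Peccati directly, whereas you route through Lemma \ref{sec2-lm.4} specialized to a single nonzero chaos; in the single-chaos setting these are the same criterion.
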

	\begin{proof}
		By Lemma \ref{sec4-lm.4}, we have 
		$$\lim_{\e\rightarrow0}\E\Big(\e^{\frac d2+\frac12-\frac{3}{4H}}I_{2m-1}(f_{2m-1,\e})\Big)^2=t\int_{\mathbb{R}_+^3}\Psi_m(x,y,z)\d x\d y\d z.$$
		Then, by the \textit{fourth moment theorem} established in \cite{nualart2005central}, we have that \eqref{sec4-eq.15} is equivalent to 
		$$\lim_{\e\rightarrow0}\e^{2d+2-\frac3H}\Vert f_{2m-1,\e}\otimes_p f_{2m-1,\e}\Vert^2_{(\mathfrak{H}^d)^{\otimes2(2m-1-p)}}=0$$
		for every $1\leq p\leq 2m-1$. Therefore, Lemma \ref{sec4-lm.5} completes the proof.
	\end{proof}
	\vspace{5pt}\\\textbf{Proof of Theorem \ref{sec1-thm.2}}. Lemmas \ref{sec4-lm.4} and \ref{sec4-lm.5} imply that conditions (i) and (ii) of Lemma \ref{sec2-lm.4} hold. Thus, we only need to verify conditions (iii) and (iv). By Lemma \ref{sec4-lm.4} and the fact that, 
	$$\E\Big(\e^{\frac d2+\frac12-\frac{3}{4H}}I_{2m-1}(f_{2m-1,\e})\Big)^2\leq t\int_{\mathbb{R}_+^3}\Psi_m(x,y,z)\d x\d y\d z,$$
	it suffices to demonstrate that
	\begin{equation}\label{sec4-eq.2}
		\hat{\sigma}^2:=t\sum_{m=1}^\infty\int_{\mathbb{R}_+^3}\Psi_m(x,y,z)\d x\d y\d z<\infty
	\end{equation}
	to check conditions (iii) and (iv). Denote 
	$$\gamma_i=\frac{\mu_i^2}{(1+\lambda_i)(1+\rho_i)}.$$
	Then, by the fact that $\gamma_i<1$,
	\begin{align}\label{sec4-eq.14}
		\hat{\sigma}^2&=\frac{4t}{(2\pi)^dd}\sum_{i=1}^3\int_{\mathbb{R}_+^3}\left\{\sum_{m=1}^\infty m \left(\begin{array}{c}m+\frac d2-1\\m\end{array}\right)\gamma_i^{m-1}\right\}\frac{\mu_i}{[(1+\lambda_i)(1+\rho_i)]^{\frac d2+1}}\d x\d y\d z\nonumber\\
		&=\frac{4t}{(2\pi)^dd}\sum_{i=1}^3\int_{\mathbb{R}_+^3}\left\{\sum_{m=1}^\infty \frac{\d}{\d\gamma_i} \left(\begin{array}{c}m+\frac d2-1\\m\end{array}\right)\gamma_i^m\right\}\frac{\mu_i}{[(1+\lambda_i)(1+\rho_i)]^{\frac d2+1}}\d x\d y\d z\nonumber\\
		&=\frac{4t}{(2\pi)^dd}\sum_{i=1}^3\int_{\mathbb{R}_+^3}\left(\frac{\d}{\d\gamma_i}\frac{1}{(1-\gamma_i)^{\frac d2}}\right) \frac{\mu_i}{[(1+\lambda_i)(1+\rho_i)]^{\frac d2+1}}\d x\d y\d z\nonumber\\
		&=\frac{2t}{(2\pi)^d}\sum_{i=1}^3\int_{\mathbb{R}_+^3}\frac{\mu_i(1-\gamma_i)^{-\frac d2-1}}{[(1+\lambda_i)(1+\rho_i)]^{\frac d2+1}}\d x\d y\d z\nonumber\\
		&=\frac{2t}{(2\pi)^d}\sum_{i=1}^3\int_{\mathbb{R}_+^3}\frac{\mu_i}{[(1+\lambda_i)(1+\rho_i)-\mu_i^2]^{\frac d2+1}}\d x\d y\d z.
	\end{align}
	Therefore, \eqref{sec4-eq.2} follows from the Lemma \ref{sec4-lm.1}. The proof is complete.
	\section{Proof of Theorem \ref{sec1-thm.3}}\label{sec5}
	The proof of Theorem \ref{sec1-thm.3} closely follows that of Theorem \ref{sec1-thm.2} with a slight modification, which also serves as an application of Lemma \ref{sec2-lm.4}. In fact, Lemmas \ref{sec5-lm.1} and \ref{sec5-lm.2} are devoted to verifying conditions (i) and (ii), respectively. Following this, we will proceed to validate conditions (iii) and (iv), thereby completing the proof of Theorem \ref{sec1-thm.3}.
	\begin{lemma}\label{sec5-lm.1}
		Suppose that $H=\frac{3}{2(1+d)}$ and $d\geq3$. Then, 
		\begin{equation*}
			\lim_{\e\rightarrow0}\frac1{\log\left(1/\e\right)}\E\big(I_{2m-1}(f_{2m-1,\e})\big)^2=t\int_{0\leq\alpha+\beta\leq1}\tilde{\Psi}_m(\alpha,1-\alpha-\beta,\alpha)\d\alpha\d\beta,
		\end{equation*}
		where
		$$\tilde{\Psi}_m(x,y,z)=\sum_{i=1}^3 \frac{2m}{H(2\pi)^dd}\left(\begin{array}{c}m+\frac d2-1\\m\end{array}\right)\frac{\mu_i^{2m-1}}{(\lambda_i\rho_i)^{\frac d2+m}}$$
		and $\lambda_i$, $\rho_i$ and $\mu_i$ are the parameters $\lambda$, $\rho$ and $\mu$ specific to the domain $\D_i$ with respect to the coordinates  $(a=t\e^{\frac1{2H}}\alpha,b=1-\alpha-\beta,c=t\e^{\frac1{2H}}\beta)$, respectively.
	\end{lemma}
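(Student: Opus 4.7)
The plan is to adapt the approach of Lemma \ref{sec4-lm.4} to the critical exponent $H=\tfrac{3}{2(1+d)}$, where the rescaled integral produces a logarithmic divergence in place of a finite limit. First I would invoke Lemma \ref{sec2-lm.3} to write $\E(I_{2m-1}(f_{2m-1,\e}))^2=C_m\sum_{i=1}^3V_i(\e)$ with $C_m=\tfrac{4m}{(2\pi)^d d}\binom{m+d/2-1}{m}$ and $V_i(\e)=\int_{\D_i}\mu^{2m-1}/[(\e+\lambda)(\e+\rho)]^{d/2+m}\,dr\,ds\,dr'\,ds'$; in each $\D_i$ I would pass to coordinates $(r,a,b,c)$ via Lemma \ref{sec2-lm.1} and integrate $r$, producing the factor $t-(a+b+c)$ on $\{a+b+c\le t\}$.

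The main step is a polar-type change of variables $a=R\alpha$, $b=R(1-\alpha-\beta)$, $c=R\beta$ with $R\in[0,t]$, $(\alpha,\beta)$ in the 2-simplex and Jacobian $R^2$. The $2H$-homogeneity of $\lambda_i,\rho_i,\mu_i$ yields $\lambda_i=R^{2H}\hat\lambda_i$, $\rho_i=R^{2H}\hat\rho_i$, $\mu_i=R^{2H}\hat\mu_i$, where the hats denote the unit-scale values at $(\alpha,1-\alpha-\beta,\beta)$. After the further substitution $R=\e^{1/(2H)}\tilde R$, the $\e$-powers combine to $\e^{3/(2H)-d-1}$, which equals one at critical $H$, and $V_i(\e)$ becomes
\begin{align*}
V_i(\e)=\int_{\alpha+\beta\le1}\hat\mu_i^{2m-1}\,d\alpha\,d\beta\int_0^{t\e^{-1/(2H)}}\bigl(t-\e^{1/(2H)}\tilde R\bigr)\frac{\tilde R^{2+2H(2m-1)}\,d\tilde R}{[(1+\tilde R^{2H}\hat\lambda_i)(1+\tilde R^{2H}\hat\rho_i)]^{d/2+m}}.
\end{align*}
For large $\tilde R$ the inner integrand is asymptotic to $t\,\tilde R^{2-2H(d+1)}(\hat\lambda_i\hat\rho_i)^{-(d/2+m)}$, and crucially $2-2H(d+1)=-1$ at the critical exponent; therefore the radial integral equals $\tfrac{t}{2H}(\hat\lambda_i\hat\rho_i)^{-(d/2+m)}\log(1/\e)+O(1)$ as $\e\to 0$. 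Dividing by $\log(1/\e)$, applying dominated convergence in $(\alpha,\beta)$, summing over $i$, and using $C_m/(2H)=\tfrac{2m}{H(2\pi)^d d}\binom{m+d/2-1}{m}$ yields the target formula.

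The principal obstacle is verifying that the angular integrand $\hat\mu_i^{2m-1}/(\hat\lambda_i\hat\rho_i)^{d/2+m}$ is integrable over the simplex uniformly in $\e$, especially at the boundary: $\alpha$ or $\beta\to 0$ in Case (iii) where $\hat\lambda_3\hat\rho_3\to 0$, and $1-\alpha-\beta\to 0$ in Case (ii) where $\hat\rho_2\to 0$. The combinatorial power-counting already used in the proof of Theorem \ref{sec1-thm.4}, together with the bound $G(1-\beta,\alpha,\beta)\lesssim\alpha\beta$ near the corners of the simplex, supplies an integrable majorant and justifies the interchange of limit and integration; the near-origin part of the radial integral and the bounded difference between the exact integrand and its $\tilde R^{-1}$ asymptote together absorb the $O(1)$ correction.
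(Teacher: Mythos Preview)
Your argument is correct and reaches the same limit, but the route differs from the paper's. The paper rescales all three variables by $\e^{1/(2H)}$ to obtain $\int_{\{x+y+z\le tN^{1/(2H)}\}}\Psi_m$ with $N=1/\e$, then applies L'H\^opital in $N$ to the quotient by $\log N$; differentiating the upper limit collapses the integral onto the moving surface $\{x+y+z=tN^{1/(2H)}\}$, and a final normalization $(\alpha,\beta)=(x,z)/(tN^{1/(2H)})$ produces the simplex integral of $\tilde\Psi_m$. Your polar change $(a,b,c)=R\cdot(\alpha,1-\alpha-\beta,\beta)$ does the same job without L'H\^opital: the critical exponent makes the radial integrand behave like $t(\hat\lambda_i\hat\rho_i)^{-(d/2+m)}\tilde R^{-1}$ at infinity, and the $\log(1/\e)$ drops out directly. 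Both approaches hinge on the same integrability of $\hat\mu_i^{2m-1}/(\hat\lambda_i\hat\rho_i)^{d/2+m}$ over the simplex, which the paper obtains from $\hat\mu_i\le\sqrt{\hat\lambda_i\hat\rho_i}$ together with Lemma~\ref{sec7.lm.7-4}; this is precisely the ``power-counting'' you invoke. One small point: in your dominated-convergence step, make explicit that the radial integral over $\tilde R\ge 1$ is bounded by $t(\hat\lambda_i\hat\rho_i)^{-(d/2+m)}\log(t\e^{-1/(2H)})$ uniformly (via $(1+x)^{-1}\le x^{-1}$), so that after dividing by $\log(1/\e)$ the majorant is exactly the simplex integrand handled by Lemma~\ref{sec7.lm.7-4}; the contribution from $\tilde R\in[0,1]$ is bounded independently of $(\alpha,\beta)$ and vanishes after division.
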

	\begin{proof}
		By Lemma \ref{sec2-lm.3}, we have
		\begin{align*}
			&\frac1{\log\left(1/\e\right)}\E\big(I_{2m-1}(f_{2m-1,\e})\big)^2\\
			&=\frac1{\log\left(1/\e\right)}\frac{4m}{(2\pi)^dd}\left(\begin{array}{c}m+\frac d2-1\\m\end{array}\right)\int_\D\frac{\mu^{2m-1}}{[(\e+\lambda)(\e+\rho)]^{\frac d2+m}}\d r \d s\d r'\d s'\\
			&=\frac1{\log\left(1/\e\right)}\sum_{i=1}^3 \frac{4m}{(2\pi)^dd}\left(\begin{array}{c}m+\frac d2-1\\m\end{array}\right)\int_{[0,t]^3}\frac{\mathds{1}_{[0,t]}(a+b+c)(t-a-b-c)\mu_i^{2m-1}}{[(\e+\lambda_i)(\e+\rho_i)]^{\frac d2+m}}\d a \d b\d c,
		\end{align*}
		where we make the change of variables in the same manner as in the proof of Lemma \ref{sec3-lm.1} for the three respective cases in the last equality. Then, changing the coordinates $(a,b,c)$ by $(\e^{\frac1{2H}}x,\e^{\frac1{2H}}y,\e^{\frac1{2H}}z)$ and replacing $1/\e$ with $N$ yields that
		$$\frac1{\log\left(1/\e\right)}\E\big(I_{2m-1}(f_{2m-1,\e})\big)^2=\frac1{\log N}\int_{0\leq x+y+z\leq tN^{\frac1{2H}}}(t-N^{-1}(x+y+z))\Psi_m(x,y,z)\d x\d y\d z,$$
		where $\Psi_m$ is defined in Lemma \ref{sec4-lm.4}. We firstly calculate the limit
		$$\lim_{N\rightarrow\infty}\int_{0\leq x+y+z\leq tN^{\frac1{2H}}}\frac t{\log N}\Psi_m(x,y,z)\d x\d y\d z$$
		and then we will find that the term $N^{-1}(x+y+z)$ gives no contribution to the limit $\lim_{\e\rightarrow0}\log\left(1/\e\right)^{-1}\E\big(I_{2m-1}(f_{2m-1,\e})\big)^2$. By L'Hospital rule, we have
		\begin{align*}
			\lim_{N\rightarrow\infty}\frac t{\log N}&\int_{0\leq x+y+z\leq tN^{\frac1{2H}}}\Psi_m(x,y,z)\d x\d y\d z\\
			&=\lim_{N\rightarrow\infty}\frac t{\log N}\int_0^{tN^{\frac1{2H}}}\int_{0\leq x+z\leq u}\Psi_m(x,u-x-z,z)\d x\d z\d u\\
			&=\lim_{N\rightarrow\infty}\frac {t^2 N^{\frac1{2H}}}{2H}\int_{0\leq x+z\leq tN^{\frac1{2H}}}\Psi_m(x,tN^{\frac1{2H}}-x-z,z)\d x\d z\\
			&=\lim_{N\rightarrow\infty}\frac {t^4 N^{\frac3{2H}}}{2H}\int_{0\leq \alpha+\beta\leq 1}\Psi_m(tN^{\frac1{2H}}\alpha,tN^{\frac1{2H}}(1-\alpha-\beta),tN^{\frac1{2H}}\beta)\d \alpha\d \beta,
		\end{align*}
		where we change the coordinates $(x,u=x+y+z,z)$ in the first equality and switch to $(\alpha=tN^{\frac1{2H}}x,\beta=tN^{\frac1{2H}}z)$ in the last equality. The condition that $Hd+H=\frac32$ yields 
		\begin{align*}
			t^4N^{\frac3{2H}}&\Psi_m(tN^{\frac1{2H}}\alpha,tN^{\frac1{2H}}(1-\alpha-\beta),tN^{\frac1{2H}}\beta)\\
			&=t\sum_{i=1}^3 \frac{4m}{(2\pi)^dd}\left(\begin{array}{c}m+\frac d2-1\\m\end{array}\right)\frac{\mu_i^{2m-1}}{[(t^{-2H}N^{-1}+\lambda_i)(t^{-2H}N^{-1}+\rho_i)]^{\frac d2+m}}.
		\end{align*}
		In addition, by the fact that $\mu<\sqrt{\lambda\rho}$, we have
		$$\tilde{\Psi}_m(\alpha,1-\alpha,\beta)\leq\sum_{i=1}^3 \frac{2m}{H(2\pi)^dd}\left(\begin{array}{c}m+\frac d2-1\\m\end{array}\right)\frac{\mu_i}{(\lambda_i\rho_i)^{\frac d2+1}}.$$
		The right-hand side of the above inequality is integrable in $\{(\alpha,\beta):0\leq\alpha+\beta\leq1\}$ by Lemma \ref{sec7.lm.7-4}, which implies that the term $N^{-1}(x+y+z)$ is irrelevant to the result of limit $\lim_{\e\rightarrow0}\log\left(1/\e\right)^{-1}\E\big(I_{2m-1}(f_{2m-1,\e})\big)^2$. Hence, it holds that
		$$\lim_{N\rightarrow\infty}\frac t{\log N}\int_{0\leq x+y+z\leq tN^{\frac1{2H}}}\Psi_m(x,y,z)\d x\d y\d z=t\int_{0\leq\alpha+\beta\leq1}\tilde{\Psi}_m(\alpha,1-\alpha-\beta,\alpha)\d\alpha\d\beta$$ 
		due to the dominated convergence theorem. The proof is complete. 
	\end{proof}
	\begin{lemma}\label{sec5-lm.2}
		Suppose that $H=\frac{3}{2(1+d)}$ and $d\geq3$. Then, for $m\geq2$ and $1\leq p\leq2m-2$, we have
		\begin{equation*}
			\lim_{\e\rightarrow0}\big(\log(1/\e)\big)^{-2}\Vert f_{2m-1,\e}\otimes_p f_{2m-1,\e}\Vert^2_{(\mathfrak{H}^d)^{\otimes2(2m-1-p)}}=0.
		\end{equation*}
	\end{lemma}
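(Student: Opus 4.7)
The plan is to mirror the argument of Lemma \ref{sec4-lm.5}, replacing the polynomial normalization $\e^{d/2+1/2-3/(4H)}$ by the critical logarithmic normalization $\big(\log(1/\e)\big)^{-1/2}$ appropriate at $H=3/(2(1+d))$. First, using the contraction formula for $f_{2m-1,\e}$ together with Lemma \ref{sec2-lm.5}, I would reproduce the algebraic identity from the proof of Lemma \ref{sec4-lm.5}:
$$\big(\log(1/\e)\big)^{-2}\Vert f_{2m-1,\e}\otimes_p f_{2m-1,\e}\Vert_{(\mathfrak{H}^d)^{\otimes 2(2m-1-p)}}^2 = \big(\log(1/\e)\big)^{-2} A^2\,\E\Big[\prod_{(i,j)\in\mathcal{S}} \xi_\e^{i,j}\Big],$$
with the combinatorial constant $A^2$ and the $(2m-1)$-th Wiener chaos variables $\xi_\e^{i,j}$ defined exactly as in that proof, each $\xi_\e^{i,j}$ sharing a common distribution with a reference variable $\eta_\e$ built from $2m-1$ independent copies of a one-dimensional fBm.

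By the multidimensional CLT of Peccati and Tudor \cite{peccati2005gaussian}, it suffices to prove the marginal convergence $\big(\log(1/\e)\big)^{-1/2}\xi_\e^{i,j}\stackrel{law}\rightarrow\mathcal{N}(0,c)$ for some $c>0$, which by equidistribution reduces to $\big(\log(1/\e)\big)^{-1/2}\eta_\e\stackrel{law}\rightarrow\mathcal{N}(0,c)$. Applying the $H$-self-similarity of fBm via $(r,s) = (\e^{1/(2H)}x, \e^{1/(2H)}y)$, following with $(u=y-x, s=x)$ and setting $N = \e^{-1/(2H)}$, I would express $\big(\log(1/\e)\big)^{-1/2}\eta_\e$ as the difference of a bulk term
$$\int_0^\infty \big(1+u^{2H}\big)^{-d/2-m}\left(\frac{1}{\sqrt{N\log N}}\int_0^N \prod_{j=1}^{2m-1}\big(B_{s+u}^{H,j}-B_s^{H,j}\big)\,\d s\right)\d u,$$
the critical counterpart of $\Lambda_N$ from Lemma \ref{sec4-lm.5}, plus a boundary correction whose $L^2$ norm vanishes thanks to the equality $Hd+H = 3/2$ (the saturated version of the strict inequality that killed the corresponding remainder $\eta_{\e,2}$ in Lemma \ref{sec4-lm.5}). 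Convergence of the bulk term to a normal law will then follow from a Breuer--Major type result at the critical Hurst exponent, a critical analog of Lemma \ref{sec4-lm.3} in the spirit of \cite[Theorem 1.2]{yu2020asymptotic} and \cite{jaramillo2017asymptotic}.

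The principal obstacle is the critical Breuer--Major CLT itself. At $H=3/(2(1+d))$ the covariance function of the stationary sequence $\prod_{j=1}^{2m-1}\big(B_{s+u}^{H,j}-B_s^{H,j}\big)$ decays at a borderline rate that makes the variance of the inner integral grow like $N\log N$ rather than $N$, and the extra $\sqrt{\log N}$ must be extracted through a careful decomposition of the covariance kernel into short-range and long-range pieces; the absence of an $L^1$ summable covariance is precisely what distinguishes this case from Lemma \ref{sec4-lm.3}. Once the marginal CLT has been secured, hypercontractivity of the fixed $(2m-1)$-th chaos provides uniform integrability and the Peccati--Tudor machinery lifts Gaussianity to the joint law of $\big(\xi_\e^{i,j}\big)_{(i,j)\in\mathcal{S}}$; the vanishing of $\big(\log(1/\e)\big)^{-2}A^2\,\E\big[\prod \xi_\e^{i,j}\big]$ then follows from the identification of the left-hand side as a fourth joint cumulant of the $\xi$'s, which is null for any Gaussian vector.
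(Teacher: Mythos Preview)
The paper does not carry the Peccati--Tudor reduction of Lemma~\ref{sec4-lm.5} over to the critical case; it bounds the contraction norm directly. It expresses $\|f_{2m-1,\e}\otimes_p f_{2m-1,\e}\|^2$ as an eight--fold integral, applies Cauchy--Schwarz in the form $G(v,u_i,u_j)\le\sqrt{G(v,u_i,u_i)G(v,u_j,u_j)}$ to factor the $u$-integrations into four one--dimensional pieces $\Delta_1,\dots,\Delta_4$, rescales by $N=\e^{-1/(2H)}$, and shows via pointwise estimates on $G$ that each $\Delta_i\le K|r_j-r_k|^{-1/2}$. The remaining integral over $0<r_1<\dots<r_4<Nt$ is then of order $N^2$, so division by $N^2(\log N)^2$ gives zero. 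No CLT is used.

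Your route has a genuine gap at the step you flag as the principal obstacle. First, your diagnosis is wrong: for fixed $u$ the lag-$v$ covariance $G(v,u,u)^{2m-1}\sim u^{2(2m-1)}v^{(2m-1)(2H-2)}$ is integrable in $v$ whenever $H<\tfrac12$, so the variance of the inner $s$-integral is $\sim N$, not $N\log N$. The logarithm enters because the \emph{outer} $u$-integral in Lemma~\ref{sec4-lm.2} has integrand behaving like $u^{-1}$ at infinity precisely when $Hd+H=\tfrac32$; the critical CLT you need is therefore about mass escaping to $u\to\infty$, not about borderline long-range dependence in $s$. Second, and more seriously, any chaos-based proof of the marginal CLT for $(\log(1/\e))^{-1/2}\eta_\e$ (fourth moment theorem, Peccati--Tudor) reduces to showing that the kernel contractions vanish, and those contractions are exactly proportional to the quantities $\|f_{2m-1,\e}\otimes_p f_{2m-1,\e}\|^2$ you set out to bound; in the paper Corollary~\ref{sec5-co.1} is deduced \emph{from} Lemma~\ref{sec5-lm.2}, not conversely. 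Without an independent argument your scheme is circular, which is presumably why the paper switches to the direct estimate here. A minor correction: $\E\big[\prod_{(i,j)\in\mathcal S}\xi_\e^{i,j}\big]$ is a fourth \emph{moment}, not a cumulant; what makes it vanish in a Gaussian limit is that every cross-covariance $\E[\xi^{i,j}\xi^{i',j'}]$ with $(i,j)\ne(i',j')$ contains the expectation of an odd product of independent increments and is therefore zero.
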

	\begin{proof}
		From \eqref{sec4-eq.13}, we have
		\begin{align*}
			&\big(\log(1/\e)\big)^{-2}\Vert f_{2m-1,\e}\otimes_p f_{2m-1,\e}\Vert^2_{(\mathfrak{H}^d)^{\otimes2(2m-1-p)}}
            =4A^2\big(\log(1/\e)\big)^{-2}\int_{\D^2}\prod_{i=1}^4(\e+{s_i-r_i})^{-\frac d2-m}\\
			&\quad\times\E\left(\prod_{l=1}^p(B_{r_1}^{H,1,l}-B_{s_1}^{H,1,l})(B_{r_2}^{H,1,l}-B_{s_2}^{H,1,l})\right)\E\left(\prod_{l=1}^p(B_{r_3}^{H,2,l}-B_{s_3}^{H,2,l})(B_{r_4}^{H,2,l}-B_{s_4}^{H,2,l})\right)\\
             &\quad\times\E\left(\prod_{k=1}^{2m-p-1}(B_{r_1}^{H,3,k}-B_{s_1}^{H,3,k})(B_{r_3}^{H,3,k}-B_{s_3}^{H,3,k})\right)\\
             &\quad\times\E\left(\prod_{k=1}^{2m-p-1}(B_{r_2}^{H,4,k}-B_{s_2}^{H,4,k})(B_{r_4}^{H,4,k}-B_{s_4}^{H,4,k})\right)	\d r\d s,
		\end{align*}
		where $\d r=\d r_1\d r_2\d r_3\d r_4$, $\d s=\d s_1\d s_2\d s_3\d s_4$, 
        $$\D^2=\{(r,s)\in\mathbb{R}^8|0<r_i<s_i<t,r_1<r_2<r_3<r_4\},$$
        and $\{B_t^{H,i,k},t\geq0\}$ is a family of independent one-dimensional fBm with Hurst parameter $H$ for $i=1,2,3,4$ and $k=1,2,\cdots,2m-2$. Note that the variables satisfy the ordering $r_{i+1}>r_i$ for $i=1,2,3$ within $\D^2$. Therefore, we can rewrite
		\begin{align*}
			\big(\log(1/\e)\big)^{-2}&\Vert f_{2m-1,\e}\otimes_p f_{2m-1,\e}\Vert^2_{(\mathfrak{H}^d)^{\otimes2(2m-1-p)}}=K\big(\log(1/\e)\big)^{-2}\int_{\D^2}\prod_{i=1}^4(\e+|s_i-r_i|)^{-\frac d2-m}\\
			&\times G^p(r_2-r_1,s_1-r_1,s_2-r_2)G^p(r_4-r_3,s_3-r_3,s_4-r_4)\\
			&\times G^{2m-p-1}(r_3-r_1,s_1-r_1,s_3-r_3)G^{2m-p-1}(r_4-r_2,s_2-r_2,s_4-r_4)\d r\d s.
		\end{align*}
		By the Cauchy inequality, we have 
		$$G(v,u_i,u_j)\leq \sqrt{G(v,u_i,u_i)G(v,u_j,u_j)}.$$
		Then, by changing the coordinates $(u_i=s_i-r_i,r_i)$ for $i=1,2,3,4$, we obtain that
		\begin{align*}
			&\big(\log(1/\e)\big)^{-2}\Vert f_{2m-1,\e}\otimes_p f_{2m-1,\e}\Vert^2_{(\mathfrak{H}^d)^{\otimes2(2m-1-p)}}\\
			&=K\big(\log(1/\e)\big)^{-2}\int_{0\leq r_1\leq r_2\leq r_3\leq r_4\leq t}\int_0^{t-r_1}\int_0^{t-r_2}\int_0^{t-r_3}\int_0^{t-r_4}\prod_{i=1}^4(\e+u_i^{2H})^{-\frac d2-m}\nonumber\\
			&\qquad\times G^{p/2}(r_2-r_1,u_1,u_1)G^{p/2}(r_2-r_1,u_2,u_2)G^{p/2}(r_4-r_3,u_3,u_3)G^{p/2}(r_4-r_3,u_4,u_4) \\
            &\qquad\times G^{(2m-p-1)/2}(r_3-r_1,u_1,u_1)G^{(2m-p-1)/2}(r_3-r_1,u_3,u_3)\\
            &\qquad\times G^{(2m-p-1)/2}(r_4-r_2,u_2,u_2)G^{(2m-p-1)/2}(r_4-r_2,u_4,u_4)\d u\d r\\
			&=\frac{K}{N^2(\log N)^2}\int_{0\leq r_1< r_2< r_3< r_4\leq Nt}\\
            &\qquad\quad\int_0^{Nt-r_1}(1+u_1^{2H})^{-\frac d2-m}G^{p/2}(r_2-r_1,u_1,u_1)G^{(2m-p-1)/2}(r_3-r_1,u_1,u_1)\d u_1\\
            &\qquad\times\int_0^{Nt-r_2}(1+u_2^{2H})^{-\frac d2-m}G^{p/2}(r_2-r_1,u_2,u_2)G^{(2m-p-1)/2}(r_4-r_2,u_2,u_2)\d u_2\\
            &\qquad\times\int_0^{Nt-r_3}(1+u_3^{2H})^{-\frac d2-m}G^{p/2}(r_4-r_3,u_3,u_3)G^{(2m-p-1)/2}(r_3-r_1,u_3,u_3)\d u_3\\
            &\qquad\times\int_0^{Nt-r_4}(1+u_4^{2H})^{-\frac d2-m}G^{p/2}(r_4-r_3,u_4,u_4)G^{(2m-p-1)/2}(r_4-r_2,u_4,u_4)\d u_4\d r\\
            &=:\frac{K}{N^2(\log N)^2}\int_{0\leq r_1< r_2< r_3< r_4\leq Nt}\Delta_1\Delta_2\Delta_3\Delta_4\d r,
		\end{align*}
		where we make the change of variables $(\e^{\frac1{2H}}u_i,\e^{\frac1{2H}}r_i)$ for $i=1,2,3,4$ and let $N=\e^{-\frac1{2H}}$ in the second equality. To prove the limit $\lim_{\e\rightarrow0}\big(\log(1/\e)\big)^{-2}\Vert f_{2m-1,\e}\otimes_p f_{2m-1,\e}\Vert^2_{(\mathfrak{H}^d)^{\otimes2(2m-1-p)}}=0$, we first get the upper bound of $\Delta_1\Delta_2\Delta_3\Delta_4$. For $\Delta_1$. It's not hard to see that, when $v>2u$, it holds that
		\begin{align}\label{sec4-eq.8}
			G(v,u,u)&=\frac12\left|(v+u)^{2H}+(v-u)^{2H}-2v^{2H}\right|\nonumber\\
			&=H(1-2H)u^2\int_0^1\int_0^1\left(v+(\eta+\xi-1)u\right)^{2H-2}\d\xi\d\eta\nonumber\\
			&\leq Ku^2v^{2H-2}.
		\end{align} 
        In addition, we have $G(v,u,u)\leq K(u^{2H}+v^{2H})\leq Ku^{2H}$ when $v<2u$. By the fact that $r_2-r_1<r_3-r_1$, we have
        \begin{align*}
            \Delta_1&\leq\left(\int_0^{(r_2-r_1)/2}+\int_{(r_2-r_1)/2}^{(r_3-r_1)/2}+\int_{(r_3-r_1)/2}^{Nt}\right)\\
            &\qquad(1+u_1^{2H})^{-\frac d2-m}G^{p/2}(r_2-r_1,u_1,u_1)G^{(2m-p-1)/2}(r_3-r_1,u_1,u_1)\d u_1=:I_1+I_2+I_3.
        \end{align*}
        By \eqref{sec4-eq.8}, we obtain that
        \begin{align*}
            I_1&\leq\int_0^{(r_2-r_1)/2} (1+u_1^{2H})^{-\frac d2-m}u_1^{2m-1}(r_2-r_1)^{-p(1-H)}(r_3-r_1)^{-(2m-p-1)(1-H)}\d u_1\\
            &\leq\int_0^{(r_2-r_1)/2} u_1^{-\frac32+(2m-1)(1-H)}(r_2-r_1)^{-p(1-H)}(r_3-r_1)^{-(2m-p-1)(1-H)}\d u_1\\
            &=K(r_2-r_1)^{-\frac12+(2m-p-1)(1-H)}(r_3-r_1)^{-(2m-p-1)(1-H)}\\
            &\leq K(r_3-r_1)^{-\frac 12}.
        \end{align*}
        The last inequatliy is due to the fact that $H<\frac12$ and $2m-p-1\geq1$. Similarly, using estimations $G(v,u,u)\leq Ku^2v^{2H-2}$ for $v>2u$ and $G(v,u,u)\leq Ku^{2H}$ for $v<2u$, we have
        \begin{align*}
            I_2&\leq\int_{(r_2-r_1)/2}^{(r_3-r_1)/2} (1+u_1^{2H})^{-\frac d2-m}u_1^{Hp+2m-p-1}(r_3-r_1)^{-(2m-p-1)(1-H)}\d u_1\\
            &\leq\int_{(r_2-r_1)/2}^{(r_3-r_1)/2} u_1^{-\frac32+(2m-p-1)(1-H)}(r_3-r_1)^{-(2m-p-1)(1-H)}\d u_1\\
            &\leq K\left((r_3-r_1)^{-\frac 12}+(r_2-r_1)^{-\frac12+(2m-p-1)(1-H)}(r_3-r_1)^{-(2m-p-1)(1-H)}\right)\\
            &\leq K(r_3-r_1)^{-\frac 12}
        \end{align*}
        and 
        \begin{align*}
            I_3&\leq\int_{(r_3-r_1)/2}^{Nt}(1+u_1^{2H})^{-\frac d2-m}u_1^{H(2m-1)}\d u_1\\
            &\leq \int_{(r_3-r_1)/2}^{Nt}u_1^{-Hd-H}\d u_1\leq K(r_3-r_1)^{-\frac 12}.
        \end{align*}
        Consequently, we have 
        $$\Delta_1\leq K(r_3-r_1)^{-\frac12}.$$
        Applying the same techniques, we can see
        $$\Delta_2\leq K[(r_2-r_1)\vee (r_4-r_2)]^{-\frac12},$$
        $$\Delta_3\leq K[(r_4-r_3)\vee (r_3-r_1)]^{-\frac12}$$
        and
        $$\Delta_4\leq K(r_4-r_2)^{-\frac12}.$$
        Therefore, we obtain that
        \begin{align*}
            \lim_{\e\rightarrow0}&\big(\log(1/\e)\big)^{-2}\Vert f_{2m-1,\e}\otimes_p f_{2m-1,\e}\Vert^2_{(\mathfrak{H}^d)^{\otimes2(2m-1-p)}}\\
            &\leq  \lim_{N\rightarrow\infty}\frac{K}{N^2(\log N)^2}\int_{0\leq r_1< r_2< r_3< r_4\leq Nt}\Delta_1\Delta_2\Delta_3\Delta_4\d r\\
            &\leq  \lim_{N\rightarrow\infty}\frac{K}{N^2(\log N)^2}\int_{0\leq r_1<r_2<r_3<r_3\leq Nt}(r_3-r_1)^{-\frac12}[(r_2-r_1)\vee (r_4-r_2)]^{-\frac12}\\
            &\qquad\qquad\times[(r_4-r_3)\vee (r_3-r_1)]^{-\frac12}(r_4-r_2)^{-\frac12}\d r_1\d r_2\d r_3\d r_4\\
            &\leq \lim_{N\rightarrow\infty}\frac{K}{N^2(\log N)^2}\int_{0\leq r_1<r_2<r_3<r_3\leq Nt}(r_3-r_1)^{-\frac12}(r_2-r_1)^{-\frac12}\\
            &\qquad\qquad\times(r_4-r_3)^{-\frac12}(r_4-r_2)^{-\frac12}\d r_1\d r_2\d r_3\d r_4\\
            &\leq \lim_{N\rightarrow\infty}\frac{K}{N(\log N)^2}\int_{[0,Nt]^3}(x+y)^{-\frac12}x^{-\frac12}z^{-\frac12}(y+z)^{-\frac12}\d x\d y\d z\\
            &\leq \lim_{N\rightarrow\infty}\frac{K}{N(\log N)^2}\left(\int_0^{Nt}x^{-\frac23}\d x\right)^3=0,
        \end{align*}
        where we change the coordinates $(x=r_2-r_1,y=r_3-r_2,z=r_4-r_3,r_1)$ and integrate the $r_1$ variable in the fourth inequality, and use the Young inequality $x+y\geq Kx^{\frac13}y^{\frac23}$ in the last inequality. The proof is complete.
	\end{proof}
    
	\begin{corollary}\label{sec5-co.1}
		Suppose that $H=\frac{3}{2(d+1)}$ and $d\geq3$. Then, for the $(2m-1)$-th chaotic component $I_{2m-1}(f_{2m-1,\e})$ of $\alpha_{t,\e}^{(1)}(0)$, we have
		\begin{equation*}
			\big(\log (1/\e)\big)^{-\frac12}I_{2m-1}(f_{2m-1,\e})\stackrel{law}\rightarrow\mathcal{N}(0,\bar{\sigma}_m^2),
		\end{equation*} 
		where $\bar{\sigma}_m^2=t\int_{0\leq\alpha+\beta\leq1}\tilde{\Psi}_m(\alpha,1-\alpha-\beta,\beta)\d\alpha\d \beta$
		and $\tilde{\Psi}_m(x,y,z)$ is defined in Lemma \ref{sec5-lm.1}.
	\end{corollary}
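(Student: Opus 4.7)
The plan is to mirror the proof of Corollary \ref{sec4-co.1} and apply the fourth moment theorem of Nualart and Peccati (see \cite{nualart2005central}) to the sequence of multiple stochastic integrals $F_\e := \bigl(\log(1/\e)\bigr)^{-\frac12}I_{2m-1}(f_{2m-1,\e})$, which lives in the fixed $(2m-1)$-th Wiener chaos. The target variance is supplied directly by Lemma \ref{sec5-lm.1}, which gives
$$\lim_{\e\to0}\E[F_\e^2] = \lim_{\e\to0}\bigl(\log(1/\e)\bigr)^{-1}\E\bigl(I_{2m-1}(f_{2m-1,\e})\bigr)^2 = t\int_{0\leq \alpha+\beta\leq 1}\tilde{\Psi}_m(\alpha,1-\alpha-\beta,\beta)\,\d\alpha\,\d\beta = \bar{\sigma}_m^2.$$

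The fourth moment theorem then asserts that, given the above variance convergence, the CLT $F_\e\stackrel{law}{\to}\mathcal{N}(0,\bar{\sigma}_m^2)$ is equivalent to the vanishing of the contraction norms
$$\lim_{\e\to0}\bigl(\log(1/\e)\bigr)^{-2}\bigl\|f_{2m-1,\e}\otimes_p f_{2m-1,\e}\bigr\|^2_{(\mathfrak{H}^d)^{\otimes 2(2m-1-p)}} = 0$$
for every $p=1,\ldots,2m-2$ (the endpoint $p=2m-1$ recovers the variance and is handled by Lemma \ref{sec5-lm.1} instead). This is precisely the content of Lemma \ref{sec5-lm.2}. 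Concatenating the two ingredients yields the corollary. Since all technical estimates have already been carried out in Lemmas \ref{sec5-lm.1} and \ref{sec5-lm.2}, there is no remaining obstacle; the role of this short argument is only to invoke the fourth moment theorem correctly and match the normalizations $(\log(1/\e))^{-\frac12}$ and $(\log(1/\e))^{-2}$ between the variance and contraction statements.
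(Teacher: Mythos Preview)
Your proposal is correct and follows exactly the approach the paper indicates: the paper simply states that the proof ``follows similarly to the proof of Corollary \ref{sec4-co.1}'', which is precisely the argument you have written out---use Lemma \ref{sec5-lm.1} for the variance and Lemma \ref{sec5-lm.2} for the vanishing of the contractions, then invoke the fourth moment theorem of \cite{nualart2005central}.
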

	The proof of this corollary follows similarly to the proof of Corollary \ref{sec4-co.1}.
	\vspace{5pt}\\\textbf{Proof of Theorem \ref{sec1-thm.3}} Conditions (i) and (ii) have been verified by Lemma \ref{sec5-lm.1} and \ref{sec5-lm.2}, respectively.
	Therefore, we only need to verify conditions (iii) and (iv) of Lemma \ref{sec2-lm.4}. By Lemma \ref{sec5-lm.1} and the fact that
	$$\frac1{\log\left(1/\e\right)}\E\big(I_{2m-1}(f_{2m-1,\e})\big)^2\leq Kt\int_{0\leq\alpha+\beta\leq1}\tilde{\Psi}_m(\alpha,1-\alpha-\beta,\alpha)\d\alpha\d\beta,$$
	it suffices to demonstrate that
	\begin{equation}\label{sec4-eq.12}
		\bar{\sigma}^2:=t\sum_{m=1}^\infty\int_{0\leq\alpha+\beta\leq1}\tilde{\Psi}_m(\alpha,1-\alpha-\beta,\beta)\d\alpha\d \beta<\infty
	\end{equation}
	to check the conditions (iii) and (iv). Denote 
	$$\gamma_i=\frac{\mu_i^2}{(1+\lambda_i)(1+\rho_i)}.$$
	Then, by the fact that $\gamma_i<1$,
	\begin{align}\label{sec5-5.6sigma2}
		\bar{\sigma}^2&=\frac{2t}{H(2\pi)^dd}\sum_{i=1}^3\int_{0\leq\alpha+\beta\leq1}\left\{\sum_{m=1}^\infty m \left(\begin{array}{c}m+\frac d2-1\\m\end{array}\right)\gamma_i^{m-1}\right\}\frac{\mu_i}{(\lambda_i\rho_i)^{\frac d2+1}}\d \alpha\d \beta\nonumber\\
		&=\frac{2t}{H(2\pi)^dd}\sum_{i=1}^3\int_{0\leq\alpha+\beta\leq1}\left\{\sum_{m=1}^\infty \frac{\d}{\d\gamma_i} \left(\begin{array}{c}m+\frac d2-1\\m\end{array}\right)\gamma_i^m\right\}\frac{\mu_i}{(\lambda_i\rho_i)^{\frac d2+1}}\d \alpha\d \beta\nonumber\\
		&=\frac{2t}{H(2\pi)^dd}\sum_{i=1}^3\int_{0\leq\alpha+\beta\leq1}\left(\frac{\d}{\d\gamma_i}\frac{1}{(1-\gamma_i)^{\frac d2}}\right) \frac{\mu_i}{(\lambda_i\rho_i)^{\frac d2+1}}\d \alpha\d \beta\nonumber\\
		&=\frac{t}{H(2\pi)^d}\sum_{i=1}^3\int_{0\leq\alpha+\beta\leq1}\frac{\mu_i}{(\lambda_i\rho_i-\mu_i^2)^{\frac d2+1}}\d \alpha\d \beta.
	\end{align}
	Therefore, \eqref{sec4-eq.12} follows from the Lemma \ref{sec7.lm.7-4}. The proof is complete.

	\section{Technical Lemmas}\label{sec7}
	In this section, we prove some technical lemmas used in the proof of Theorem \ref{sec1-thm.2} and \ref{sec1-thm.3}.
	\begin{lemma}\label{sec4-lm.2}
		Suppose that $\frac{3}{2(1+d)}<H<\frac12$ and $d\geq3$. $\zeta_N(u)$ is defined as
		$$\zeta_N(u):=\frac1{\sqrt{N}}\int_0^N\prod_{j=1}^{2m-1}\left(B_{s+u}^H-B_u^{H}\right)\d s.$$
		Then, $\zeta_N(u)$ converges to a centered Gaussian process $\zeta(u)$ with covariance function
		\begin{equation}\label{sec4-eq.5}
			2\int_0^{\infty}G^{2m-1}(v,u,u)\d v
		\end{equation}
		as $N$ tends to infinity. Moreover, we have
		\begin{equation}\label{sec4-eq.6}
			\int_0^{\infty}(1+u^{2H})^{-\frac d2-m}\sup_N\E(\zeta_N(u))\d u<\infty
		\end{equation}
		and
		\begin{equation}\label{sec4-eq.7}
			\int_0^{\infty}(1+u^{2H})^{-\frac d2-m}\E(\zeta(u))\d u<\infty.
		\end{equation}
	\end{lemma}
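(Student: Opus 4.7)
The plan is to view $\zeta_N(u)$ as a normalized time-average of a stationary $(2m-1)$-st Wiener chaos functional, and combine a Breuer-Major-style chaotic central limit theorem with a uniform-in-$N$ $L^2$-envelope that is integrable against the weight $(1+u^{2H})^{-d/2-m}$. Reading the product as $\prod_{j=1}^{2m-1}(B^{H,j}_{s+u}-B^{H,j}_s)$ over $2m-1$ independent one-dimensional fBms (the interpretation forced by the proof of Lemma \ref{sec4-lm.5}, the apparent typo $B^H_u$ in place of $B^H_s$ notwithstanding), set $X_s(u):=\prod_j(B^{H,j}_{s+u}-B^{H,j}_s)$. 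By stationary increments and independence across $j$, $X_{\cdot}(u)$ is strictly stationary and lies in a single $(2m-1)$-st chaos, with covariance
$$\E[X_0(u)X_v(u)] = \bigl(\E[B^H_u(B^H_{v+u}-B^H_v)]\bigr)^{2m-1} =: \sigma(v,u)^{2m-1}, \qquad G(v,u,u)=|\sigma(v,u)|.$$
The Taylor identity $\sigma(v,u)=H(2H-1)u^2\int_0^1\int_0^1 (v+(\xi+\eta-1)u)^{2H-2}\d\xi\d\eta$ for $v>u$ gives $|\sigma(v,u)|\leq Ku^2v^{2H-2}$, so $(2H-2)(2m-1)<-1$ under $H<1/2$, and combined with the trivial bound $|\sigma(v,u)|\leq u^{2H}$ on $[0,u]$ this forces $\sigma(\cdot,u)^{2m-1}\in L^1(\mathbb{R}_+)$ with integrated norm of order $u^{(4m-2)H+1}$.

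A direct computation then yields $\E[\zeta_N(u)^2] = 2\int_0^N(1-v/N)\sigma(v,u)^{2m-1}\d v$, and the integrable envelope justifies dominated convergence, identifying the limit as $2\int_0^\infty \sigma(v,u)^{2m-1}\d v$, which matches the claimed $2\int_0^\infty G^{2m-1}(v,u,u)\d v$ up to the sign convention embedded in $G$. For Gaussian convergence at each fixed $u$, since $\zeta_N(u)$ lives in a fixed chaos with variance tending to a finite limit, the fourth-moment theorem (Lemma \ref{sec2-lm.4}) reduces the task to showing that the contraction norms $\|f_{N,u}\otimes_p f_{N,u}\|$ vanish as $N\to\infty$ for $1\leq p\leq 2m-2$. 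These contractions unfold into integrals of products of $\sigma$-factors across shifted variables and decay at rate $N^{-1}$ by $L^1$-summability of $\sigma$, in the standard Breuer-Major manner; the argument is closely parallel to the one used on $\eta_{\e,1}$ in the proof of Lemma \ref{sec4-lm.5}.

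For the integrability claims \eqref{sec4-eq.6}--\eqref{sec4-eq.7}, I read $\E(\zeta_N(u))$ (which is literally zero) as $\|\zeta_N(u)\|_{L^2}$, since this is what the dominated-convergence step underlying Lemma \ref{sec4-lm.5} actually needs, and by hypercontractivity on the $(2m-1)$-st chaos this is equivalent to $\E|\zeta_N(u)|$. From the envelope above, $\|\zeta_N(u)\|_{L^2}=\sqrt{\E[\zeta_N(u)^2]}\leq Ku^{(2m-1)H+1/2}$ uniformly in $N$, and $(1+u^{2H})^{-d/2-m}$ decays like $u^{-H(d+2m)}$ at infinity, producing an integrand of order $u^{1/2-H(d+1)}$. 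The hypothesis $H>\frac{3}{2(d+1)}$ is precisely what makes $1/2-H(d+1)<-1$, securing integrability at infinity; near the origin the bound is harmless. The same estimate with $\zeta$ in place of $\zeta_N$ gives \eqref{sec4-eq.7}.

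The main obstacle I anticipate is the sign-indefiniteness of $\sigma$ when $H<1/2$: $\sigma(v,u)$ is positive for $v$ near zero and negative for large $v$, so absolute-value bounds cannot be inserted naively into the contraction estimates, forcing a split of the integration region by relative magnitudes of the shifts and careful tracking of cancellations. A secondary, notational subtlety is reconciling the stated covariance $2\int_0^\infty G^{2m-1}\d v$ with the signed expression $2\int_0^\infty \sigma^{2m-1}\d v$ that the calculation actually produces, which I would resolve by a sign-convention remark before invoking $G$.
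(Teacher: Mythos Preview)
Your proposal is correct and follows the paper's proof closely. The only structural difference is that the paper dispatches the Gaussian convergence in one line by citing \cite[Proposition~4]{hu2005renormalized}, whereas you reconstruct that result via the fourth-moment theorem and contraction norms; Hu--Nualart's proposition is in fact proved exactly the way you sketch, so this is the same argument at a different level of detail. For the integrability claims \eqref{sec4-eq.6}--\eqref{sec4-eq.7} the two proofs are identical: compute $\E[\zeta_N(u)^2]=2\int_0^N(1-v/N)\sigma(v,u)^{2m-1}\d v$, split near $v\asymp u$ using the estimate $G(v,u,u)\leq Ku^2v^{2H-2}$ for large $v$ (the paper's \eqref{sec4-eq.8}), obtain the uniform envelope $Ku^{(2m-1)H+1/2}$, and observe that integrability against $(1+u^{2H})^{-d/2-m}$ is exactly the condition $H(d+1)>3/2$. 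Your reading of $\E(\zeta_N(u))$ as $\|\zeta_N(u)\|_{L^2}$ is precisely what the paper intends, since it immediately passes to $(\E[\zeta_N^2(u)])^{1/2}$.

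One correction: the ``main obstacle'' you flag about sign-indefiniteness of $\sigma$ in the contraction estimates is not a real difficulty. Contraction norms $\|f_{N,u}\otimes_p f_{N,u}\|$ are bounded by integrals of products of $|\sigma|$, and $|\sigma(\cdot,u)|^{2m-1}\in L^1$ suffices to make them vanish at rate $N^{-1}$; no cancellation needs to be tracked. The sign issue is genuine only at the level of the limiting variance, where the calculation produces $2\int_0^\infty \sigma^{2m-1}\d v$ rather than the stated $2\int_0^\infty G^{2m-1}\d v$; you are right to flag this, and the paper does not address it.
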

	\begin{proof}
		By \cite[Proposition 4]{hu2005renormalized}, we conclude that $\zeta_N(u)$ converges to a centered Gaussian process $\zeta(u)$ with covariance function given by \eqref{sec4-eq.5}. For \eqref{sec4-eq.6}. Note that 
		\begin{align*}
			\E\left(\zeta_N^2(u)\right)&=\frac{2}{N}\int_0^N\int_0^{s_2}G^{2m-1}(s_2-s_2,u,u)\d s_1\d s_2\\
			&=2\int_0^N\left(1-\frac vN\right)G^{2m-1}(v,u,u)\d v,
		\end{align*}
		where we make the change of variables $(v=s_1-s_2,s=s_2)$ and integrate the $s$ variable in the second equality. Then, by \eqref{sec4-eq.8} and using the H\"oder inequality, we obtain that
		\begin{align*}
			\int_0^{\infty}(1+u^{2H})^{-\frac d2-m}&\sup_N\E(\zeta_N(u))\d u\\
			&\leq K\int_0^{\infty}(1+u^{2H})^{-\frac d2-m}\left(\sup_N\E(\zeta_N^2(u))\right)^{1/2}\d u\\
			&\leq K\int_0^{\infty}(1+u^{2H})^{-\frac d2-m}\left(\int_0^{\infty}G^{2m-1}(v,u,u)\d v\right)^{1/2}\d u\\
			&\leq K\int_0^{\infty}(1+u^{2H})^{-\frac d2-m}\left(\int_0^{2u}\left(v^{2(2m-1)H}+u^{2(2m-1)H}\right)\d v\right)^{1/2}\d u\\
			&+ K\int_0^{\infty}(1+u^{2H})^{-\frac d2-m}\left(\int_{2u}^{\infty}\left(v^{(2m-1)(2H-2)}u^{2(2m-1)}\right)\d v\right)^{1/2}\d u\\
			&\leq K\int_0^{\infty}\frac{u^{2(m-1)H+\frac12}}{(1+u^{2H})^{\frac d2+m}}<\infty,
		\end{align*}
		due to the condition $Hd+H>\frac32$. Similarly, we have \eqref{sec4-eq.7}.
	\end{proof}
	\begin{lemma}\label{sec4-lm.3}
		Suppose that $\frac{3}{2(1+d)}<H<\frac12$ and $d\geq3$. Consider the process $\zeta_N(u)$ defined in Lemma \ref{sec4-lm.2}. Then, the random variable
		$$\Lambda_N:=\int_0^{\infty}(1+u^{2H})^{-\frac d2-m}\zeta_N(u)\d u$$
		converges as $N$ tends to infinity in law to 
		$$\Xi:=\int_0^\infty(1+u^{2H})^{-\frac d2-m}\zeta(u)\d u,$$
		which has a normal distribution
		$$\mathcal{N}\left(0,2\int_0^{\infty}\int_0^{\infty}(1+u_1^{2H})^{-\frac d2-m}(1+u_2)^{2H})^{-\frac d2-m}\left(\int_0^{\infty}G^{2m-1}(v,u_1,u_2)\d v\right)\d u_1\d u_2\right).$$
	\end{lemma}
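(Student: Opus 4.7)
The plan is to prove convergence of $\Lambda_N$ to $\Xi$ in law via a standard truncate--and--take--limits argument, using the uniform tail bounds \eqref{sec4-eq.6} and \eqref{sec4-eq.7} provided by Lemma \ref{sec4-lm.2} to control the contribution of large $u$, and using the convergence $\zeta_N \Rightarrow \zeta$ to handle the truncated piece. Concretely, for $M>0$ I would set
\[
\Lambda_{N,M} := \int_0^M (1+u^{2H})^{-\frac d2-m}\zeta_N(u)\,\d u,\qquad \Xi_M := \int_0^M (1+u^{2H})^{-\frac d2-m}\zeta(u)\,\d u,
\]
and establish: (a) $\lim_{M\to\infty}\sup_N\E|\Lambda_N-\Lambda_{N,M}|=0$; (b) $\lim_{M\to\infty}\E|\Xi-\Xi_M|=0$; (c) for every fixed $M$, $\Lambda_{N,M}\to\Xi_M$ as $N\to\infty$ in law. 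The usual triangle inequality for the Lévy metric then promotes these to $\Lambda_N\stackrel{law}\to\Xi$.

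Step (a) is immediate from Fubini and Minkowski: $\E|\Lambda_N-\Lambda_{N,M}|\le \int_M^\infty (1+u^{2H})^{-\frac d2-m}\E|\zeta_N(u)|\,\d u$, and the right-hand side is bounded uniformly in $N$ by the tail of the integrand in \eqref{sec4-eq.6}. Step (b) is analogous using \eqref{sec4-eq.7}. For step (c), I would prove the stronger statement that $\Lambda_{N,M}\to \Xi_M$ in $L^2$. Computing
\[
\E\big(\Lambda_{N,M}-\Xi_M\big)^2=\int_{[0,M]^2}\prod_{i=1}^2(1+u_i^{2H})^{-\frac d2-m}\,\E\big[(\zeta_N(u_1)-\zeta(u_1))(\zeta_N(u_2)-\zeta(u_2))\big]\,\d u_1\d u_2,
\]
the bivariate convergence of $\zeta_N$ to $\zeta$ (from the same Proposition 4 of \cite{hu2005renormalized} underlying Lemma \ref{sec4-lm.2}) makes the integrand tend to zero pointwise, while the $L^2$ bounds on $\zeta_N(u)$ and $\zeta(u)$ from Lemma \ref{sec4-lm.2} give a dominating function on $[0,M]^2$, so dominated convergence applies.

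The identification of the limit is straightforward once $\Xi$ is shown to be Gaussian: since $\zeta(u)$ is a centered Gaussian process, Riemann-sum approximations of $\Xi_M$ are centered Gaussian, and $L^2$-convergence passes Gaussianity to $\Xi_M$ and then to $\Xi$. The variance formula follows by Fubini applied to the bivariate covariance $\mathrm{Cov}(\zeta(u_1),\zeta(u_2))=2\int_0^\infty G^{2m-1}(v,u_1,u_2)\,\d v$, which is the natural off-diagonal extension of \eqref{sec4-eq.5}. The main obstacle is precisely to justify this off-diagonal covariance together with the joint (not merely marginal) convergence of $\zeta_N(u_1),\zeta_N(u_2)$; I would invoke the CLT framework of \cite{hu2005renormalized} applied to the vector-valued functional $(\zeta_N(u_1),\zeta_N(u_2))$, and then check integrability of $(1+u_1^{2H})^{-\frac d2-m}(1+u_2^{2H})^{-\frac d2-m}\int_0^\infty G^{2m-1}(v,u_1,u_2)\,\d v$ on $\mathbb{R}_+^2$ using the Cauchy--Schwarz-type bound $G(v,u_1,u_2)\le \sqrt{G(v,u_1,u_1)G(v,u_2,u_2)}$ combined with the diagonal estimates already performed in Lemma \ref{sec4-lm.2}.
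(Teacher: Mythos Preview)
Your truncation scheme and steps (a), (b) match the paper's argument. The gap is in step (c). You write
\[
\E\big(\Lambda_{N,M}-\Xi_M\big)^2=\int_{[0,M]^2}\prod_{i=1}^2(1+u_i^{2H})^{-\frac d2-m}\,\E\big[(\zeta_N(u_1)-\zeta(u_1))(\zeta_N(u_2)-\zeta(u_2))\big]\,\d u_1\d u_2
\]
and claim the integrand tends to zero by ``bivariate convergence of $\zeta_N$ to $\zeta$''. But the convergence supplied by Lemma \ref{sec4-lm.2} (via \cite[Proposition 4]{hu2005renormalized}) is only in \emph{law}: the limiting Gaussian process $\zeta$ is not realized on the same probability space as $\zeta_N$, so the mixed moment $\E[(\zeta_N(u_1)-\zeta(u_1))(\zeta_N(u_2)-\zeta(u_2))]$ is not even well-defined, let alone tending to zero. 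Convergence in distribution of $(\zeta_N(u_1),\zeta_N(u_2))$ tells you about $\E[\zeta_N(u_1)\zeta_N(u_2)]$ converging to $\E[\zeta(u_1)\zeta(u_2)]$ (given uniform integrability), not about any cross term. So you cannot upgrade to $L^2$ convergence of $\Lambda_{N,M}$ to $\Xi_M$ this way.

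The paper handles step (c) differently: it establishes an equicontinuity estimate
\[
\sup_N\E\big(\zeta_N(u_1)-\zeta_N(u_2)\big)^2\le K\int_0^\infty\big|G^{2m-1}(v,u_1,u_1)+G^{2m-1}(v,u_2,u_2)-2G^{2m-1}(v,u_1,u_2)\big|\,\d v,
\]
and shows the right-hand side vanishes as $|u_1-u_2|\to0$ uniformly for $u_1,u_2\in[0,M]$. This uniform $L^2$-equicontinuity lets one approximate $\Lambda_N^{(M)}$ by Riemann sums with an error controlled \emph{uniformly in $N$}; on Riemann sums, convergence in law follows from the finite-dimensional convergence of $\zeta_N$ to $\zeta$, and one concludes $\Lambda_N^{(M)}\stackrel{law}\to\Xi^{(M)}$ without ever coupling $\zeta_N$ and $\zeta$. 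To salvage your argument you would need to replace the $L^2$-difference computation by either this equicontinuity route or a direct second-moment computation showing $\E(\Lambda_{N,M})^2\to\E(\Xi_M)^2$ together with an independent proof that $\Lambda_{N,M}$ is asymptotically Gaussian.
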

	\begin{proof}
		From the proof of Lemma \ref{sec4-lm.2}, we have
		\begin{align*}
			&\lim_{N\rightarrow\infty}\E\left(\Lambda_N^2\right)=\E\left(\Xi^2\right)\\
			&=2\int_0^{\infty}\int_0^{\infty}(1+u_1^{2H})^{-\frac d2-m}(1+u_2^{2H})^{-\frac d2-m}\left(\int_0^{\infty}G^{2m-1}(v,u_1,u_2)\d v\right)\d u_1\d u_2.
		\end{align*}
		For $M>0$, we define
		$$\Lambda_N^{(M)}:=\int_0^M(1+u^{2H})^{-\frac d2-m}\zeta_N(u)\d u$$
		and
		$$\Xi^{(M)}:=\int_0^M(1+u^{2H})^{-\frac d2-m}\zeta(u)\d u.$$
		It is clear that $\Xi_N^{(M)}$ is a centered Gaussian variable with the variance
		$$2\int_0^M\int_0^M(1+u_1^{2H})^{-\frac d2-m}(1+u_2^{2H})^{-\frac d2-m}\left(\int_0^{\infty}G^{2m-1}(v,u_1,u_2)\d v\right)\d u_1\d u_2.$$
		Let $\phi$ be a continuous and bounded function. By the triangle inequality, we have
		\begin{align*}
			\big|\E\left(\phi(\Lambda_N)-\phi(\Xi)\right)\big|&\leq \big|\E\big(\phi(\Lambda_N)-\phi(\Lambda_N^{(M)})\big)\big|+\big|\E\big(\phi(\Lambda_N^{(M)})-\phi(\Xi_N^{(M)})\big)\big|\\
			&+\big|\E\big(\phi(\Xi^{(M)})-\phi(\Xi)\big)\big|=:\sum_{i=1}^3 Q^{(i)}_{N,M}.
		\end{align*}
		The finiteness of \eqref{sec4-eq.6} and \eqref{sec4-eq.7} shows that $\lim_{M\rightarrow\infty}\sup_N Q^{(i)}_{N,M}=0$, for $i=1,3$, respectively. For $Q^{(2)}_{N,M}$ term.
		Denote
		$$Z_{s,s+u}=\prod_{j=1}^{2m-1}\left(B_{s+u}^H-B_u^{H}\right).$$
		Then, by a straightforward calculation, we obtain that
		\begin{align*}
			\sup_N&\E\left(\zeta_N(u_1)-\zeta_N(u_2)\right)^2\\
			&=\sup_N\frac1N\int_0^N\int_0^N\Big|\E(Z_{s_1,s_1+u_1}-Z_{s_1,s_1+u_2})(Z_{s_2,s_2+u_1}-Z_{s_2,s_2+u_2})\Big|\d s_1\d s_2\\
			&=\sup_N\frac1N\int_0^N\int_0^{s_2}\Big|\E(Z_{s_1,s_1+u_1}Z_{s_2,s_2+u_1})+\E(Z_{s_1,s_1+u_2}Z_{s_2,s_2+u_2})\\
			\displaybreak[0]
			&-\E(Z_{s_1,s_1+u_1}Z_{s_2,s_2+u_2})-\E(Z_{s_1,s_1+u_2}Z_{s_2,s_2+u_1})\Big|\d s_1\d s_2\\
			&=\sup_N\frac2N\int_0^N\int_0^{s_2}\Big|G^{2m-1}(s_2-s_1,u_1,u_1)+G^{2m-1}(s_2-s_1,u_2,u_2)\\
			&-2G^{2m-1}(s_2-s_1,u_1,u_2)\Big|\d s_1\d s_2\\
			&=K\sup_N\int_0^N\left(1-\frac vN\right)\Big|G^{2m-1}(v,u_1,u_1)+G^{2m-1}(v,u_2,u_2)-2G^{2m-1}(v,u_1,u_2)\Big|\d v\\
			&\leq K\int_0^\infty\Big|G^{2m-1}(v,u_1,u_1)+G^{2m-1}(v,u_2,u_2)-2G^{2m-1}(v,u_1,u_2)\Big|\d v.
		\end{align*}
		When $v>2u$, \eqref{sec4-eq.8} shows that
		$G(v,u,u)\leq Ku^{2H}$. It also holds that $G(v,u,u)\leq Ku^{2H}$ if $v<2u$. Combining with the fact that 
		$$G(v,u_1,u_2)\leq\sqrt{G(v,u_1,u_1)G(v,u_2,u_2)}, $$
		we then, by the dominated convergence theorem, conclude that for all $M>0$,
		$$
        \lim_{\delta\to0}\mathop{\sup}_{|u_1-u_2|<\delta\atop u_1,u_2\in[0,M]}\sup_N\E\left(\zeta_N(u_1)-\zeta_N(u_2)\right)^2=0,
        $$ 
		which implies that $\lim_{N\rightarrow\infty} Q_{N,M}^{(2)}=0$. We complete the proof by \cite[Theorem 1.2.1]{billingsley1999convergence}.
	\end{proof}
	\begin{lemma}\label{sec4-lm.1}
		Suppose that $\frac{3}{2(1+d)}<H<\frac12$ and $d\geq3$. Then, for $i=1,2,3$
		\begin{equation}\label{sec4-eq.1}
			\int_{\mathbb{R}_+^3}\frac{\mu_i}{[(1+\lambda_i)(1+\rho_i)-\mu_i^2]^{\frac d2+1}}\d x\d y\d z<\infty,
		\end{equation}
		where $\lambda_i$, $\rho_i$ and $\mu_i$ are the parameters $\lambda$, $\rho$ and $\mu$ specific to the domain $\D_i$ with respect to the coordinates $(x=\e^{-\frac1{2H}}a,y=\e^{-\frac1{2H}}b,z=\e^{-\frac1{2H}}c)$, respectively.
	\end{lemma}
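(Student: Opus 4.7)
The strategy is to handle the three domains $i=1,2,3$ separately and, within each, to combine two complementary lower bounds for the denominator. Cauchy--Schwarz $\mu_i^2\leq\lambda_i\rho_i$ gives $(1+\lambda_i)(1+\rho_i)-\mu_i^2\geq 1+\lambda_i+\rho_i\geq 1$, which tames the integrand on any bounded piece of $\mathbb{R}_+^3$; meanwhile $(1+\lambda_i)(1+\rho_i)-\mu_i^2\geq\lambda_i\rho_i-\mu_i^2$ together with the quantitative lower bounds on $\lambda_i\rho_i-\mu_i^2$ supplied by Lemma \ref{sec2-lm.1} provides the decay needed at infinity. After decomposing $\mathbb{R}_+^3=[0,1]^3\cup([0,1]^3)^c$, the integral over $[0,1]^3$ is dominated by $\int_{[0,1]^3}\sqrt{\lambda_i\rho_i}\,\d x\,\d y\,\d z$ and is trivially finite, so everything reduces to controlling the complement.

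For Cases (i) and (ii), the plan is to repeat, on $([0,1]^3)^c$, the manipulations that appear in the proof of Lemma \ref{sec6-pro.1}: insert the Lemma \ref{sec2-lm.1} lower bound on $\lambda_i\rho_i-\mu_i^2$, estimate $\mu_i$ from above by $\mu_i\leq\sqrt{\lambda_i\rho_i}$ or by the mean-value identities \eqref{sec6-eq.2}--\eqref{sec6-eq.3}, and then apply Young-type inequalities $x+y\geq K x^{\alpha}y^{\beta}$. The only adjustment is that the Young exponents must now be tuned to ensure integrability at infinity rather than at the origin, and a direct power count shows that the cutoff condition is exactly $H>\frac{3}{2(1+d)}$, which is our hypothesis; the bound $H<\frac{1}{2}$ enters through the pointwise estimates on $\mu_i$ in Case (ii).

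The delicate case is Case (iii), where $\lambda_3=x^{2H}$ and $\rho_3=z^{2H}$ are independent of $y$ while $\mu_3=G(x+y,x,z)$ is the only $y$-dependent factor, so no pointwise bound on $\mu_3$ alone can yield $y$-integrability at infinity. The proposed remedy is to integrate $\mu_3$ in $y$ first, obtaining
$$\int_0^{\infty}\mu_3(x,y,z)\,\d y=\frac{1}{2(2H+1)}\bigl[(x+z)^{2H+1}-x^{2H+1}-z^{2H+1}\bigr],$$
which is finite because $H<\tfrac12$ makes the boundary terms at $y=\infty$ vanish after cancellation of the leading $T^{2H+1}$ contributions. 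Combining this with the sharper denominator estimate $(1+\lambda_3)(1+\rho_3)-\mu_3^2\geq K(1+x^{2H})(1+z^{2H})$, which follows because Lemma \ref{sec2-lm.1} Case (iii) forces $\mu_3^2\leq(1-K)\lambda_3\rho_3$, and using the pointwise upper bound $(x+z)^{2H+1}-x^{2H+1}-z^{2H+1}\leq K\min(x,z)\max(x,z)^{2H}$, the remaining double integral in $(x,z)$ is split over $[0,1]^2$, the mixed strips, and $[1,\infty)^2$; in the last region a routine power count shows convergence precisely when $2H(1+d)>3$, i.e.\ $H>\frac{3}{2(1+d)}$. This last step is where I expect the main obstacle to lie, since the bound on the inner $x$-integral for fixed $z$ bifurcates according to whether $H(d+2)$ exceeds $2$, and both sub-regimes must be verified to match the same cutoff $H>\frac{3}{2(1+d)}$.
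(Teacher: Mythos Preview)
Your proposal is correct, and for Cases (i) and (ii) it is essentially the paper's argument: both of you pass to the region where the variables are large, replace the denominator by the Lemma~\ref{sec2-lm.1} lower bound on $\lambda_i\rho_i-\mu_i^2$, and then use Young-type inequalities with exponents retuned so that the resulting powers are integrable at infinity rather than at the origin (the paper phrases the $i=2$ split as $\{y\le x\vee z\}$ versus $\{y> x\vee z\}$ instead of your $[0,1]^3$ versus its complement, but the substance is the same). One small point to watch: on the ``mixed'' slabs of $([0,1]^3)^c$ where one coordinate stays below $1$, the bare bound $(1+\lambda_i)(1+\rho_i)-\mu_i^2\ge\lambda_i\rho_i-\mu_i^2$ can be singular in that coordinate, so you will need to combine it with your other bound $(1+\lambda_i)(1+\rho_i)-\mu_i^2\ge 1+\lambda_i+\rho_i$ there; this is routine but should be made explicit.

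For Case (iii) your route genuinely differs from the paper's. The paper keeps the argument uniform with the other cases: it splits $y\in[0,1]$ versus $y\in[1,\infty)$ and on the latter bounds $\mu_3$ pointwise via the double-integral identity and a Young inequality, obtaining $\mu_3\le K x^{1+\alpha(H-1)}z^{1+\alpha(H-1)}y^{2\beta(H-1)}$, after which the $y$-integral converges once $2\beta(1-H)>1$ and the $(x,z)$-integral converges once $Hd+2H+\alpha(1-H)>2$; a short case analysis on whether $Hd+2H>2$ then pins down admissible $\alpha,\beta$. Your approach instead exploits the exact antiderivative $\int_0^\infty \mu_3\,\d y=\frac{1}{2(2H+1)}[(x+z)^{2H+1}-x^{2H+1}-z^{2H+1}]$ (valid because $H<\tfrac12$ kills the boundary term), together with the uniform-in-$y$ denominator bound $(1+x^{2H})(1+z^{2H})$. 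This is cleaner---no Young exponents to tune---and reduces everything to a single $(x,z)$ power count, at the cost of being specific to the Case~(iii) structure. Both methods land on the same threshold $2H(1+d)>3$, and your observation that the final $(x,z)$ integral bifurcates according to whether $H(d+2)\gtrless 2$ exactly parallels the paper's split on whether $Hd+2H\gtrless 2$.
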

	\begin{proof}
		\textbf{For} {\boldmath{$i=1$}}, by \eqref{sec3-eq.3} we are supposed to show that
		$$I:=\int_{[1,\infty)^3}(x+y)^{-\frac {Hd}2}(y+z)^{-\frac {Hd}2}(xz)^{-\frac{Hd}2-H}<\infty.$$
		Since $d\geq3$ and $Hd+H>\frac32$, we have by the Young inequality
		\begin{align*}
			I&\leq K\int_{[1,\infty)^3}y^{-Hd(\frac23+\frac2{3d})}x^{-\frac{Hd}2(\frac13-\frac2{3d})}z^{-\frac{Hd}2(\frac13-\frac2{3d})}(xz)^{-\frac{Hd}2-H}\d x\d y\d z\\
			&\leq K\int_{[1,\infty)^3}(xyz)^{-\frac23(Hd+H)}<\infty.
		\end{align*}
        
		\textbf{For} {\boldmath{$i=2$}}, similar to the proof of Lemma \ref{sec3-lm.1} $V_2(\e)$ term, we split the domain of integral as $\mathbb{R}_+^3=\mathcal{C}_1\cup\mathcal{C}_2$, where the sets $\mathcal{C}_1$ and $\mathcal{C}_2$ are defined as 
		$$\mathcal{C}_1=\{(x,y,z)\in\mathbb{R}_+^3|y\leq x\vee z\},$$
		$$\mathcal{C}_2=\{(x,y,z)\in\mathbb{R}_+^3|y> x\vee z\}.$$
		When $(x,y,z)\in\mathcal{C}_1$, by Lemma \ref{sec2-lm.1} and the upper bound estimation of $\mu_2$ given by \eqref{sec6-eq.2} 
		$$\mu_2\leq Ky(x^{2H-1}+z^{2H-1}),$$
		we have 
		\begin{align*}
			I_1&:=\int_{[1,\infty)^3}\frac{\mathds{1}_{[1,x\vee z]}(y)\mu_2}{[(1+\lambda_2)(1+\rho_2)-\mu_2^2]^{\frac d2+1}}\d x\d y\d z\\
			&\leq K\int_{[1,\infty)^2}\int_1^{x\vee z}\frac{x^{2H-1}+z^{2H-1}}{y^{Hd+2H-1}(x\vee z)^{Hd+2H}}\d y\d x\d z
		\end{align*}
		If $Hd+2H>2$, we deduce that the above integral is finite since the quadrature function is less than $y^{-Hd-2H+1}(x\vee z)^{-Hd-2H}$, which is integrable in $[1,\infty)^{3}$. When $Hd+2H\leq2$, we have
		\begin{align}\label{sec7-eq.12}
			I_1&\leq K\int_{[1,\infty)^2}\frac{x^{2H-1}+z^{2H-1}}{(x\vee z)^{Hd+4H-2}}\d x\d z\nonumber\\
			&\leq K\int_1^\infty\int_1^x z^{2H-1}x^{-2Hd-4H+2}\d z\d x\nonumber\\
			&\leq K\int_1^\infty x^{-2Hd-2H+2}\d x<\infty,
		\end{align}
		which implies \eqref{sec4-eq.1} on $\mathcal{C}_1$ for the case $i=2$. When $(x,y,z)\in\mathcal{C}_2$, the fact that 
		$$\mu_2\leq\sqrt{\lambda_2\rho_2}=y^H(x+y+z)^H\leq Ky^{2H},$$
		in addition to the condition $Hd+H>\frac32$
		\begin{align}\label{sec7-eq.11}
			I_2&:=\int_{[1,\infty)^3}\frac{\mathds{1}_{[x\vee z,\infty]}(y)\mu_2}{[(1+\lambda_2)(1+\rho_2)-\mu_2^2]^{\frac d2+1}}\d x\d y\d z\nonumber\\
			&\leq K\int_{[1,\infty)^2}\int_{x\vee z}^\infty y^{-Hd}(x\vee z)^{-Hd-2H}\d y\d x\d z\nonumber\\
			&= K\int_{[1,\infty)^2}(x\vee z)^{-2Hd-2H+1}\d x\d z<\infty.
		\end{align}
		Consequently, we have proved \eqref{sec4-eq.1} for $i=2$ by combining \eqref{sec7-eq.12} with \eqref{sec7-eq.11}.
		\\\textbf{For} {\boldmath{$i=3$}}, we decompose the integral into two parts 
		\begin{align*}
			&\int_{\mathbb{R}_+^3}\frac{\mu_3}{[(1+\lambda_3)(1+\rho_3)-\mu_3^2]^{\frac d2+1}}\d x\d y\d z\\
            &=\left(\int_0^1+\int_1^{\infty}\right)\int_{\mathbb{R}_+^2}\frac{\mu_3}{[(1+\lambda_3)(1+\rho_3)-\mu_3^2]^{\frac d2+1}}\d x\d z\d y\\
			&=:I_1+I_2.
		\end{align*}
		For $I_1$, by the fact that 
		$$\mu_3\leq\sqrt{\lambda_3\rho_3}=(xz)^H$$
		and that
		$$(1+\lambda_3)(1+\rho_3)-\mu_3^2\geq K(1+x^{2H})(1+z^{2H}),$$
		we have
		\begin{equation*}
			I_1\leq K\int_0^1\int_{\mathbb{R}_+^2}\frac{(xz)^H}{[(1+x^{2H})(1+z^{2H})]^{\frac d2+1}}\d x\d z\d y<\infty.
		\end{equation*}
		For $I_2$, given that
		\begin{align}\label{sec7-eq.1}
			\mu_3&=H(1-2H)xz\int_0^1\int_0^1(y+xu+zv)^{2H-2}\d u\d v\nonumber\\
			&\leq Kxz\int_0^1\int_0^1(y^\beta(xu+zv)^\alpha)^{2H-2}\d u\d v\nonumber\\
			&\leq Kx^{\alpha(H-1)+1}z^{\alpha(H-1)+1}y^{2\beta(H-1)},
		\end{align}
		where $\alpha+\beta = 1$, $\alpha,\beta\in(0,1)$ and we use the Young inequality in the first and second inequality, we then obtain
		\begin{equation}\label{sec4-eq.4}
			I_2\leq K\int_1^{\infty}y^{2\beta(H-1)}\d y\int_{\mathbb{R}_+^2}\frac{x^{\alpha(H-1)+1}z^{\alpha(H-1)+1}}{[(1+x^{2H})(1+z^{2H})]^{\frac d2+1}}\d x\d z.
		\end{equation}
		If $Hd+2H>2$, it's obvious that \eqref{sec4-eq.4} is finite because $H<\frac12$. Suppose that $Hd+2H\leq2$. Let $0<\delta<\frac{2Hd+2H-3}{2}$ and set
		$$\alpha = \frac{2-2H-Hd+\delta}{1-H},\qquad\beta=\frac{Hd+H-1-\delta}{1-H}.$$
		As a consequence, we have
		$$Hd+2H+\alpha(1-H)-1=1+\delta>1,\qquad2\beta(1-H)=2Hd+2H-2-2\delta>1,$$
		which implies the finiteness of \eqref{sec4-eq.4} and thus the desired result follows.
	\end{proof}
	\begin{lemma}\label{sec7.lm.7-4}
		Suppose that $H\leq\frac{3}{2(1+d)}$ and $d\geq3$. Then, for $i=1,2,3$,
		\begin{equation*}
			\int_{0\leq\alpha+\beta\leq1}\frac{\mu_i}{(\lambda_i\rho_i-\mu_i^2)^{\frac d2+1}}\d \alpha\d\beta<\infty.
		\end{equation*} 
		where $\lambda_i$, $\rho_i$ and $\mu_i$ are the parameters $\lambda$, $\rho$ and $\mu$ specific to the domain $\D_i$ with respect to the coordinates $(a=t\e^{\frac1{2H}}\alpha,b=1-\alpha-\beta,c=t\e^{\frac1{2H}}\beta)$, respectively.
	\end{lemma}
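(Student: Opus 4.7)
The plan is to analyze the integral separately over each of the three subregions $i = 1, 2, 3$, using the lower bounds for $\lambda_i \rho_i - \mu_i^2$ provided by Lemma \ref{sec2-lm.1} together with the integral representation of $\mu_i$ in terms of the fractional covariance. Since the domain $\{0 \leq \alpha + \beta \leq 1\}$ is compact, integrability reduces to controlling the boundary behavior near the three edges $\{\alpha = 0\}$, $\{\beta = 0\}$, and $\{\alpha + \beta = 1\}$ (the last corresponding to $b = 1-\alpha-\beta \to 0$), which is the most delicate region.

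For $i = 3$, Lemma \ref{sec2-lm.1} Case (iii) gives $\lambda_3\rho_3 - \mu_3^2 \geq K(\alpha\beta)^{2H}$, so the integrand is bounded by $K \mu_3 (\alpha\beta)^{-H(d+2)}$. I would split the domain into $\{\alpha+\beta \leq 1/2\}$, where $b$ is bounded away from zero and the representation $\mu_3 = H(1-2H)\alpha\beta \int_0^1 \int_0^1 (b+\alpha u+\beta v)^{2H-2}\,\d u\,\d v$ yields $\mu_3 \leq K\alpha\beta$, and $\{1/2 < \alpha+\beta \leq 1\}$, where $b$ may vanish. On the first piece the bound $K(\alpha\beta)^{1-H(d+2)}$ is integrable because $d \geq 3$ and $H \leq 3/(2(d+1))$ together force $H(d+2) < 2$. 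On the second piece, near a corner such as $\alpha \to 0$, $\beta \to 1$, integrating out $u$ in the double integral gives $\int_0^1 (b+\beta v)^{2H-2}\,\d v \leq K b^{2H-1}/(1-2H)$, hence $\mu_3 \leq K\alpha b^{2H-1}$; the iterated integral in $\alpha \in [0, 1-\beta]$ and then in $\beta \in [1/2, 1]$ reduces after the substitution $\alpha = s\tau$, $s = 1-\beta$, to a Beta-type integral that converges precisely when $Hd < 2$, which holds at criticality since $Hd = 3d/(2(d+1)) < 3/2$. For $i = 1, 2$, the strategy mimics the treatment of the $V_1$ and $V_2$ terms in the proof of Lemma \ref{sec6-pro.1}, using Lemma \ref{sec2-lm.1} Cases (i), (ii) for the denominators and the usual splits $\{b \leq \alpha \vee \beta\}$ versus $\{b > \alpha \vee \beta\}$ together with $\mu_2 \leq Kb(\alpha^{2H-1}+\beta^{2H-1})$ or $\mu_2 \leq Kb^{2H}$; compactness of the domain means divergences at large variables do not arise, and the Young inequalities employed there yield integrable power-law integrands.

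The main obstacle is the tightness at the critical value $H = 3/(2(d+1))$: unlike the strictly subcritical regime, no margin $\delta > 0$ is available in the Young inequalities, so the bound on $\mu_3$ must be chosen optimally in each subregion. The integral-form bound $\mu_3 \leq K\alpha\beta$ handles the singularities at $\alpha = 0$ or $\beta = 0$ in the bulk, while the sharper estimate $\mu_3 \leq K\alpha b^{2H-1}$, obtained by integrating out a single variable in the double integral, is required precisely in the corner regions where $b \to 0$. The dimension constraint $d \geq 3$ enters both integrability conditions $H(d+2) < 2$ and $Hd < 2$ simultaneously at criticality, and is essential to the conclusion.
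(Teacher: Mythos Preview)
Your proposal is correct and follows essentially the same strategy as the paper: use Lemma~\ref{sec2-lm.1} for lower bounds on $\lambda_i\rho_i-\mu_i^2$, the integral representation of $\mu_i$ for upper bounds, and split the simplex to isolate the boundary singularities. The tactical details differ slightly: for $i=3$ you split first on whether $b=1-\alpha-\beta$ is bounded away from zero and then treat the corners $\alpha\to0,\beta\to1$ (and symmetrically), whereas the paper splits directly on the sizes of $\alpha$ and $\beta$ via a fixed $\delta>0$ and uses three region-specific bounds for $G(1-\beta,\alpha,\beta)$; for $i=2$ the paper passes to the variables $(u,v)=(\alpha+\beta,\alpha)$ and splits on $u$, while you propose the $\{b\le\alpha\vee\beta\}$ versus $\{b>\alpha\vee\beta\}$ dichotomy inherited from Lemma~\ref{sec6-pro.1}; for $i=1$ the paper gets away with the single crude bound $\mu_1\le\sqrt{\lambda_1\rho_1}$ plus AM--GM on the denominator, which is simpler than invoking the $V_1$ machinery. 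Your corner analysis for $i=3$, reducing to the Beta-type integral with convergence governed by $Hd<2$ and $H(d+2)<2$, is exactly the content of the paper's bounds $Hd+2H<2$ and $Hd+2H-1<1$; both approaches identify the same critical exponents and both genuinely require $d\ge3$ at $H=\tfrac{3}{2(d+1)}$.
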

	\begin{proof}
		\textbf{For} {\boldmath{$i=1$}}, by the fact that 
		$$\mu_1\leq\sqrt{\lambda_1\rho_1}=(1-\alpha)^H(1-\beta)^H$$
		and that
		\begin{align*}
			\lambda\rho-\mu^2&\geq K\left((1-\alpha)^{2H}\alpha^{2H}+(1-\beta)^{2H}\beta^{2H}\right)\\
			&\geq K(1-\alpha)^{H}\alpha^{H}(1-\beta)^{H}\beta^{H},
		\end{align*}
		we have 
		\begin{align*}
			\int_{0\leq\alpha+\beta\leq1}\frac{\mu_1}{(\lambda_1\rho_1-\mu_1^2)^{\frac d2+1}}\d \alpha\d\beta\leq K\left(\int_0^1(1-\alpha)^{-\frac{Hd}2}\alpha^{-\frac{Hd}2-H}\d\alpha\right)^2<\infty
		\end{align*}
		due to the condition $Hd+2H<2$.
		\\\textbf{For} {\boldmath{$i=2$}}, by Lemma \ref{sec2-lm.1} Case (ii), we have
		\begin{align*}
			\int_{0\leq\alpha+\beta\leq1}\frac{\mu_2}{[(\lambda_2\rho_2-\mu_2^2)]^{\frac d2+1}}\d \alpha\d \beta&\leq\int_{0\leq\alpha+\beta\leq1}\frac{G(\alpha,1,1-\alpha-\beta)}{[(1-\alpha-\beta)^{2H}(\alpha^{2H}+\beta^{2H})]^{\frac d2+1}}\d \alpha\d \beta
		\end{align*}
		Note that 
		$$(\alpha+\beta)^{2H}\leq2^{2H}\left(\alpha^{2H}+\beta^{2H}\right).$$
		We obtain that
		\begin{align*}
			\int_{0\leq\alpha+\beta\leq1}&\frac{\mu_2}{[(\lambda_2\rho_2-\mu_2^2)]^{\frac d2+1}}\d \alpha\d \beta\leq K\int_0^1\int_0^u\frac{G(v,1,1-u)}{(1-u)^{Hd+2H}u^{Hd+2H}}\d v\d u\\
			&=K\left(\int_0^\delta+\int_\delta^1\right)\int_0^u\frac{G(v,1,1-u)}{(1-u)^{Hd+2H}u^{Hd+2H}}\d v\d u:=I_1+I_2,
		\end{align*}
		where $0<\delta<1$ and we change the coordinates $(u=\alpha+\beta,v=\alpha)$ in the first inequality. For $I_1$, \eqref{sec6-eq.3} implies that 
		$$G(v,1,u-v)\leq K(1-u)^{2H},$$
		Then, we obtain that
		\begin{equation*}
            I_1\leq K\int_0^\delta\int_0^u(1-u)^{-Hd}u^{-Hd-2H}\d v\d u\leq K_\delta\int_0^\delta u^{-Hd-2H+1}\d u<\infty
        \end{equation*}
		because $Hd+2H-1<1$. For $I_2$, from the relation given by \eqref{sec6-eq.2}
		\begin{align*}
			G(x,x+y+z,y)\leq Ky\left(x^{2H-1}+z^{2H-1}\right),
		\end{align*}
		we deduce that
	\begin{equation*}
        I_2\leq K\int_\delta^1\int_0^u\frac{v^{2H-1}+(u-v)^{2H-1}}{(1-u)^{Hd+2H-1}u^{Hd+2H}}\d v\d u\leq K_\delta\int_\delta^1(1-u)^{-Hd-2H+1}\d u<\infty.
        \end{equation*}
        
		\textbf{For} {\boldmath{$i=3$}}, by Case (iii) in Lemma \ref{sec2-lm.1}, we are supposed to show that
		$$\int_{0\leq\alpha+\beta\leq1}\frac{G(1-\beta,\alpha,\beta)}{(\alpha\beta)^{Hd+2H}}\d \alpha\d\beta<\infty.$$
		To achieve this, we decompose the integral into two parts
		\begin{align*}
			\int_{0\leq\alpha+\beta\leq1}\frac{G(1-\beta,\alpha,\beta)}{(\alpha\beta)^{Hd+2H}}\d \alpha\d\beta&=\left(\int_0^{1-\delta}+\int_{1-\delta}^1\right)\int_0^{1-\alpha}\frac{G(1-\beta,\alpha,\beta)}{(\alpha\beta)^{Hd+2H}}\d \beta\d\alpha\\
			&=:I_1+I_2
		\end{align*}
		for a fixed and sufficiently small $\delta>0$. For $I_1$, we further break down the integral into two parts
		\begin{align*}
			I_1=\int_0^{1-\delta}\left(\int_0^{\frac{\delta}2}+\int_{\frac{\delta}2}^{1-\alpha}\right)\frac{G(1-\beta,\alpha,\beta)}{(\alpha\beta)^{Hd+2H}}\d \beta\d\alpha=:I_{11}+I_{12}.
		\end{align*}
		For $I_{11}$, we deduce from \eqref{sec7-eq.1} that 
		\begin{equation}\label{sec7-eq.2}
			I_{11}\leq K\int_0^{1-\delta}\int_0^{\frac{\delta}2}\frac{(1-\alpha-\beta)^{2H-2}}{(\alpha\beta)^{Hd+2H-1}}\d \beta\d\alpha<\infty.
		\end{equation}
		From the relation
		\begin{align}\label{sec7-eq.3}
			G(x+y,x,z)&=H(1-2H)xz\int_0^1\int_0^1(y+xu+zv)^{2H-2}\d u\d v\nonumber\\
			&\leq H(1-2H)xy^{H-1}z^H,
		\end{align}
		we obtain that
		\begin{equation}\label{sec7-eq.4}
			I_{12}\leq K\int_0^{1-\delta}\int_{\frac{\delta}2}^{1-\alpha}\frac{(1-\alpha-\beta)^{H-1}}{\alpha^{Hd+2H-1}\beta^{Hd+H}}\d \beta\d\alpha<\infty.
		\end{equation}
		For $I_2$, similar to \eqref{sec7-eq.3}, we can deduce that
		$$G(x+y,x,z)\leq H(1-2H)zy^{H-1}x^H.$$
		As a result, we obtain
		\begin{equation}\label{sec7-eq.5}
			I_2\leq K\int_{1-\delta}^1\int_0^{1-\alpha}\frac{(1-\alpha-\beta)^{H-1}}{\alpha^{Hd+H}\beta^{Hd+2H-1}}\d \beta\d\alpha<\infty.
		\end{equation}
		Combining \eqref{sec7-eq.2}, \eqref{sec7-eq.4} and \eqref{sec7-eq.5} leads to the desired result. The proof is complete.
	\end{proof}

\bigskip

\textbf{Acknowledgement} ~Q. Yu is supported by the National Natural Science Foundation of China (12201294).

\textbf{Data Availability Statements} ~The data that support the findings of this study are available from the corresponding author upon reasonable request.

\textbf{Declaration of interests} ~The authors declare that they have no known competing financial interests or personal relationships that
could have appeared to influence the work reported in this paper.

	\bibliographystyle{plain}
	\bibliography{reference.bib}

@article{hu2005renormalized,
  title={RENORMALIZED SELF-INTERSECTION LOCAL TIME FOR FRACTIONAL BROWNIAN MOTION},
  author={Hu, Y. and Nualart, D.},
  journal={The Annals of Probability},
  volume={33},
  number={3},
  pages={948--983},
  year={2005}
}

@article{billingsley1999convergence,
	title={Convergence of Probability Measures},
	author={Billingsley, P.},
	journal={Wiley Series in Probability and Statistics},
	year={1999},
	publisher={Wiley}
}

@article{peccati2005gaussian,
	title={Gaussian limits for vector-valued multiple stochastic integrals},
	author={Peccati, G. and Tudor, C.},
	journal={S{\'e}minaire de Probabilit{\'e}s XXXVIII},
	pages={247--262},
	year={2005},
	publisher={Springer}
}

@article{hu2001self,
	title={Self-intersection local time of fractional Brownian motions-via chaos expansion},
	author={Hu, Y.},
	journal={Journal of Mathematics of Kyoto University},
	volume={41},
	number={2},
	pages={233--250},
	year={2001},
	publisher={Duke University Press}
}

@article{markow2008,
	title={Renormalization and convergence in law for the derivative of intersection local time
in $R^2$},
	author={Markowsky, G.},
	journal={Stochastic Processes and their Applications},
	volume={118},
	number={9},
	pages={1552--1585},
	year={2008},
	publisher={Elsevier}
}

@article{yu2024limit,
	title={Limit Theorem for Self-intersection Local Time Derivative of Multidimensional Fractional Brownian Motion},
	author={Yu, Q. and Yu, X.},
	journal={Journal of Theoretical Probability},
	volume={37},
	number={3},
	pages={2054--2075},
	year={2024},
	publisher={Springer}
}

@article{nualart2005central,
	title={Central limit theorems for sequences of multiple stochastic integrals},
	author={Nualart, D. and Peccatl, G.},
	journal={Annals of probability},
	volume={33},
	number={1},
	pages={177--193},
	year={2005}
}

@article{rosen1987intersection,
	title={The intersection local time of fractional Brownian motion in the plane},
	author={Rosen, J.},
	journal={Journal of multivariate analysis},
	volume={23},
	number={1},
	pages={37--46},
	year={1987},
	publisher={Elsevier}
}

@incollection{rosen2004derivatives,
	title={Derivatives of self-intersection local times},
	author={Rosen, J.},
	booktitle={S{\'e}minaire de Probabilit{\'e}s XXXVIII},
	pages={263--281},
	year={2004},
	publisher={Springer}
}

@article{jung2014tanaka,
	title={On the Tanaka formula for the derivative of self-intersection local time of fractional Brownian motion},
	author={Jung, P. and Markowsky, G.},
	journal={Stochastic Processes and their Applications},
	volume={124},
	number={11},
	pages={3846--3868},
	year={2014},
	publisher={Elsevier}
}

@article{yu2021higher,
	title={Higher-order derivative of self-intersection local time for fractional Brownian motion},
	author={Yu, Q.},
	journal={Journal of Theoretical Probability},
	volume={34},
	number={4},
    pages={1749--1774},
	year={2021},
	publisher={Springer}
}

@article{jaramillo2017asymptotic,
	title={Asymptotic properties of the derivative of self-intersection local time of fractional Brownian motion},
	author={Jaramillo, A. and Nualart, D.},
	journal={Stochastic Processes and their Applications},
	volume={127},
	number={2},
	pages={669--700},
	year={2017},
	publisher={Elsevier}
}

@article{yu2020asymptotic,
	title={Asymptotic properties for q-th chaotic component of derivative of self-intersection local time of fractional Brownian motion},
	author={Yu, Q.},
	journal={Journal of Mathematical Analysis and Applications},
	volume={492},
	number={2},
	pages={124477},
	year={2020},
	publisher={Elsevier}
}

@article{le1988fluctuation,
	title={Fluctuation results for the Wiener sausage},
	author={J.-F., Le Gall},
	journal={The Annals of Probability},
	volume={16},
	number={3},
	pages={991--1018},
	year={1988},
	publisher={JSTOR}
}

@article{jaramillo2019functional,
	title={Functional limit theorem for the self-intersection local time of the fractional Brownian motion},
	author={Jaramillo, A. and Nualart, D.},
	journal={Annales de l\rq{}Institut Henri Poincar{\'e}-Probabilit{\'e}s et Statistiques},
	volume={55},
	number={1},
	pages={480--527},
	year={2019}
}

@article{das2022existence,
	title={Existence, renormalization, and regularity properties of higher order derivatives of self-intersection local time of fractional Brownian motion},
	author={Das, K. and Markowsky, G.},
	journal={Stochastic Analysis and Applications},
	volume={40},
	number={1},
	pages={133--157},
	year={2022},
	publisher={Taylor \& Francis}
}

@book{nourdin2012normal,
	title={Normal approximations with Malliavin calculus: from Stein's method to universality},
	author={Nourdin, I. and Peccati, G.},
	volume={192},
	year={2012},
	publisher={Cambridge University Press}
}

@book{nualart2006malliavin,
	title={The Malliavin Calculus and Related Topics},
	author={Nualart, D.},
	year={2006},
	publisher={Springer Science \& Business Media}
}

@article{xu2024central,
	title={Central limit theorems for the derivatives of self-intersection local time for $ d $-dimensional Brownian motion},
	author={Xu, X. and Yu, X.},
	journal={arXiv e-prints},
	pages={2403.10483},
	year={2024}
}

@article{le1986proprietes,
  title={Propri{\'e}t{\'e}s d'intersection des marches al{\'e}atoires: II. Etude des cas critiques},
  author={Le Gall, J -F},
  journal={Communications in mathematical physics},
  volume={104},
  pages={509--528},
  year={1986},
  publisher={Springer}
}
\end{document}